\theoremstyle{plain}
\newtheorem{theorem}{Theorem}[section]
\newtheorem*{theorem*}{Theorem}
\newtheorem{maintheorem}{Theorem}
\newtheorem{lemma}[theorem]{Lemma}
\newtheorem*{claim*}{Claim}
\newtheorem{proposition}[theorem]{Proposition}
\newtheorem{corollary}[theorem]{Corollary}
\theoremstyle{definition}
\newtheorem{remark}[theorem]{Remark}
\numberwithin{equation}{section}
\numberwithin{figure}{section}
\newcommand{\val}{\mathrm{val}}
	\newcommand{\chiE}{\chi^{\dagger}}
	\newcommand{\mat}[1]{\begin{pmatrix}#1\end{pmatrix}}
\begin{document}
	
	\title[Branching rules for principal series representations of  unramified $\mathrm{U}(1,1)$]{Branching rules for principal series representations of  unramified $\mathrm{U}(1,1)$}
	\author{Ekta Tiwari}
	\address{Department of Mathematics and Statistics, University of Ottawa, Ottawa, Canada K1N 6N5}
	\email{etiwa049@uottawa.ca}
	\keywords{representation theory of $p$-adic groups, principal series representations, maximal compact subgroup, local character expansion}
	\subjclass[2020]{Primary: 22E50}
	
\begin{abstract}
	Let $G$ denote the unramified quasi-split unitary group $\mathbb{U}(1,1)(F)$ over a $p$-adic field $F$ with residual characteristic $p \neq 2$. In this paper, we first construct a large family of irreducible representations of the maximal compact subgroup $\mathcal{K} = \mathbb{U}(1,1)(\mathcal{O}_F)$ of $G$. We then describe the branching rules for all principal series representations of $G$ upon restriction to $\mathcal{K}$ in terms of these representations. The resulting decomposition is multiplicity-free and is characterized by distinct degrees. Finally, we present two important applications of this decomposition that address certain recent open conjectures in the literature.  This is the first in series of two articles in which we provide branching rules for all irreducible smooth representations of the  
	$G$ upon restriction to $\mathcal{K}$. 
\end{abstract}
\maketitle
	\section{Introduction}
	The representation theory of connected reductive linear algebraic groups over non-archime\-d\-ean local fields is a vibrant subject at the intersection of algebra, number theory, harmonic analysis, and algebraic geometry. The study of representations of these groups offers fundamental insights into both local and global aspects of automorphic forms and the Langlands program, providing a bridge between local symmetry structures and global arithmetic properties. One powerful method for investigating the internal structure of such representations is through branching rules, which describe how irreducible representations decompose upon restriction to interesting subgroups.
	
	While branching rules have been extensively studied for real Lie groups, 
	the $p$-adic setting remains significantly less developed. 
	In the $p$-adic case $(p\neq 2)$, branching rules have been established for only a few groups, 
	including: 
	\(\mathrm{GL}_{2}(F)\), due to W.~Casselman~\cite{Cas71} and K.~Hansen~\cite{Kri87};  
	\(\mathrm{PGL}_{2}(F)\), due to A.~Silberger~\cite{Sil1970, Sil1977};  
	the Weil representation of \(\mathrm{Sp}_{2n}(F)\) restricted to one maximal compact subgroup, 
	due to D.~Prasad~\cite{Dip98}, with restriction to all conjugacy classes of maximal compact 
	subgroups subsequently obtained by K.~Maktouf and P.~Torasso~\cite{makpie2011};  
	principal series representations of \(\mathrm{GL}_{3}(F)\), were addressed by 
	M.~Nevins and P.~Campbell~\cite{CamMon2009, CamMon2010}, 
	with  explicit decomposition in terms of representations of maximal compact 
	subgroups later given by U.~Onn and P.~Singla~\cite{PooOnn2014};  
	and finally, the branching rules for principal series and supercuspidal representations 
	of \(\mathrm{SL}_{2}(F)\), obtained by M.~Nevins~\cite{Mon2005, MN13}.

 In order to study the branching, we divide the representations into three classes: the principal series representations, the depth-zero irreducible supercuspidal representations, and the positive-depth irreducible supercuspidal representations.  Together, these irreducible 
	representations generate the category of smooth representations of $G$. In this article, we determine the branching rules for  all
	principal series representations of $G$ upon restriction to a maximal compact 
	open subgroup $\mathcal{K}$ of $G$. The latter two classes are treated in~\cite{ET25}. Although $G$ is closely related to the $\mathrm{SL}(2,F)$, since its derived subgroup is isomorphic to $\mathrm{SL}(2,F)$, one can at best form a ballpark expectation for the decomposition based on the $\mathrm{SL}(2,F)$ case. Writing down the explicit decomposition for $G$ is considerably more delicate, and it is challenging to deduce it directly from the known results for $\mathrm{SL}(2,F)$. The paper of M.~Nevins on branching rules for $\mathrm{SL}(2,F)$~\cite{Mon2005} has served as a guiding framework for this work, and many of the resulting branching rules exhibit strong similarities.
	
We first use the properties of smooth representations to decompose the principal series representations upon restriction to $\mathcal{K}$, thereby obtaining a canonical decomposition. Let $\mathbbm{1}$ and $\mathrm{St}$ denote the inflations to $\mathcal{K}$ of the trivial and Steinberg representations, $\mathbbm{1}_{q}$ and $\mathrm{St}_{q}$, of $\mathrm{U}(1,1)(\mathfrak{f})$, respectively. Then the canonical decomposition can be stated as follows:

	\begin{maintheorem}[Theorem~\ref{thm2}]
		Let $\chi$ be a character of $T$ of minimal depth $r$, and let $(\pi_{\chi}, V_{\chi})$ denote the principal series representation associated to $\chi$. Then  for $d\geq r+1$, there exists irreducible representations $\mathcal{W}_{d, \chi}$ of degree $(q^{2}-1)q^{d-1}$ such that
		$$\mathrm{Res}_{\mathcal{K}}\pi_{\chi}\cong V_{\chi}^{\mathcal{K}_{r+1}}\oplus\bigoplus_{d \geq r+1}\mathcal{W}_{d,\chi}$$
		where $V_{\chi}^{\mathcal{K}_{1}}=\mathbbm{1}_{q}+\mathrm{St}_{q}$ if $\chi=\mathbbm{1}$, and  $V_{\chi}^{\mathcal{K}_{r+1}}$ is an irreducible representation of degree $(q+1)q^{r}$ otherwise.
	\end{maintheorem}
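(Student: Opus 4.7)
The strategy is to realize $\pi_\chi|_{\mathcal{K}}$ concretely via the Iwasawa decomposition $G = \mathcal{K}B$ and then filter by fixed subspaces under the principal congruence subgroups $\mathcal{K}_d$. Because each $\mathcal{K}_d$ is normal in $\mathcal{K}$ with finite quotient, complements can be chosen level-by-level using Maschke's theorem; the content is then to compute the dimensions of the fixed-vector subspaces $V_\chi^{\mathcal{K}_d}$ and to identify the resulting complements with the irreducible $\mathcal{K}$-representations constructed earlier in the paper.

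First, Iwasawa yields $V_\chi|_{\mathcal{K}} \cong \mathrm{Ind}_{B \cap \mathcal{K}}^{\mathcal{K}}(\chi|_{B \cap \mathcal{K}})$, so $V_\chi$ may be modelled as locally constant functions $f : \mathcal{K} \to \mathbb{C}$ satisfying $f(bk) = \chi(b)f(k)$, with $\mathcal{K}$ acting by right translation. Using the standard decomposition of $\mathcal{K}$ relative to $B \cap \mathcal{K}$ (Bruhat-style on each congruence quotient) together with the minimal-depth hypothesis on $\chi$, I would count the double cosets on which $\chi$ is trivial to obtain $V_\chi^{\mathcal{K}_d} = 0$ for $d \leq r$ when $\chi \neq \mathbbm{1}$, and $\dim V_\chi^{\mathcal{K}_d} = (q+1)q^{d-1}$ for all $d \geq r+1$.

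Second, since $\mathcal{K}_d$ is normal of finite index in $\mathcal{K}$, each $V_\chi^{\mathcal{K}_d}$ is a $\mathcal{K}$-subrepresentation, and semisimplicity of the finite-group quotient provides a $\mathcal{K}$-stable complement $\mathcal{W}_{d,\chi}$ of $V_\chi^{\mathcal{K}_d}$ inside $V_\chi^{\mathcal{K}_{d+1}}$ for each $d \geq r+1$. The dimension formula then forces $\dim \mathcal{W}_{d,\chi} = (q^2 - 1)q^{d-1}$, and smoothness of $V_\chi$ gives $V_\chi = \bigcup_{d} V_\chi^{\mathcal{K}_d}$, yielding the claimed direct sum. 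The base piece $V_\chi^{\mathcal{K}_{r+1}}$, viewed as a representation of the finite group $\mathcal{K}/\mathcal{K}_{r+1}$, is an induced principal-series-type representation of this quotient: in the trivial-character case it reduces to $\mathrm{Ind}_{B(\mathfrak{f})}^{\mathrm{U}(1,1)(\mathfrak{f})} \mathbbm{1}$, which decomposes classically as $\mathbbm{1}_q + \mathrm{St}_q$; for nontrivial $\chi$, the minimal-depth hypothesis guarantees that the inducing character is regular, and irreducibility of $V_\chi^{\mathcal{K}_{r+1}}$ follows from the Mackey criterion applied to the relevant finite quotient, with the Weyl-group conjugate of $\chi$ distinct from $\chi$.

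The main obstacle is proving irreducibility of each $\mathcal{W}_{d,\chi}$ for $d \geq r+1$ and matching it with a specific representation from the family constructed earlier in the paper. My plan is to exhibit, in the induced-function model, a distinguished vector of $\mathcal{W}_{d,\chi}$ whose stabilizer in $\mathcal{K}$ and transformation character can be read off explicitly, then use Frobenius reciprocity to identify $\mathcal{W}_{d,\chi}$ with the compactly induced representation attached to that character; the dimension $(q^2 - 1)q^{d-1}$ pins down the match uniquely. Multiplicity-freeness of the full decomposition follows immediately once the pieces are identified, since the degrees $(q^2 - 1)q^{d-1}$ are pairwise distinct for different $d$ and never coincide with the base-piece degree $(q+1)q^r$, so no constituent can repeat.
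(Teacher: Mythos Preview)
Your framework---restrict via Iwasawa, filter by $V_\chi^{\mathcal{K}_d}$, take Maschke complements, count dimensions---matches the paper's, and your dimension formulae are correct. But the paper proves irreducibility of both $V_\chi^{\mathcal{K}_{r+1}}$ and each $\mathcal{W}_{d,\chi}$ by a single self-intertwining computation (Proposition~\ref{dimension1principalseries}): one shows $\dim\mathrm{End}_{\mathcal{K}}(V_\chi^{\mathcal{K}_d})=d-r$ for nontrivial $\chi$ by running over the explicit double cosets $\mathcal{B}\mathcal{K}_d\backslash\mathcal{K}/\mathcal{B}\mathcal{K}_d$ of Proposition~\ref{double coset for principal series}, and then the increment-by-one between consecutive levels forces each complement to be irreducible with no cross-terms. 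The explicit identification of $\mathcal{W}_{d,\chi}$ with a Shalika-type induced representation is carried out \emph{afterwards}, in~\S\ref{an explicit decomposition}, and is not needed for Theorem~\ref{thm2} itself; your plan effectively merges these two steps, which is legitimate but requires all of the machinery of~\S\ref{chapter3} and~\S\ref{an explicit decomposition} as input.

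Your argument for irreducibility of the base piece has a real gap when $r>0$. You appeal to the Mackey criterion with ``the Weyl-group conjugate of $\chi$ distinct from $\chi$'', but on the finite quotient $\mathcal{K}/\mathcal{K}_{r+1}$ the double cosets of $\mathcal{B}\mathcal{K}_{r+1}$ are not just $\{1,\mathrm{w}\}$: there are additional cosets represented by $g_k=\begin{psmallmatrix}1&0\\\sqrt{\epsilon}\varpi^k&1\end{psmallmatrix}$ for $1\le k\le r$, and one must verify that none of these intertwine $\chi$ with itself. This is precisely where the \emph{minimal-depth} hypothesis---that the depth of $\chi$ on $T$ equals its true depth on the split subtorus $S$---actually enters, and without it the claim is false. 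The paper handles this in the proof of Proposition~\ref{dimension1principalseries} by exhibiting, for each such $k$, an explicit element of $\mathcal{B}\cap(\mathcal{B}\mathcal{K}_d)^{g_k}$ on which $\chi$ and $\chi^{g_k}$ disagree, using that $\chi^{\dagger}|_{1+\mathfrak{p}_F^k}\neq 1$. Your sketch does not account for these extra cosets, so as written it does not establish irreducibility of $V_\chi^{\mathcal{K}_{r+1}}$ for $r>0$.
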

	\vspace{-1em}
The preceding theorem provides a decomposition; however, each representation $\mathcal{W}_{d, \chi}$ is realized as a quotient of an induced representation. To make this decomposition explicit, we construct irreducible representations of $\mathcal{K}$ that are isomorphic to each $\mathcal{W}_{d, \chi}$.
	Since maximal compact subgroups of reductive groups are not 
	themselves linear algebraic groups, comparatively little is known about 
	their representation theory. In fact, full classifications have only been 
	achieved in a few cases:  
	$\mathrm{PGL}_{2}(\mathcal{O})$ by A.~Silberger~\cite{Sil1970}, for $p\neq 2$;  
	$\mathrm{SL}_{2}(\mathcal{O})$ by J.~Shalika~\cite{Sha2004}, again under 
	the assumption $p\neq 2$;  
	$\mathrm{GL}_{2}(\mathcal{O})$ by A.~Stasinski~\cite{Sta2009}, valid for all 
	$p$;  
	and ramified $\mathrm{U}(1,1)(\mathcal{O}_F)$ by L.~G.~Frez~\cite{Frez2016}, 
	also for $p\neq 2$.  
	Earlier contributions include work of Kloosterman~\cite{Kl1946, Kl21946}, Tanaka~\cite{Tan1966, Tan1967} and Kutzko (unpublished) on 
	$\mathrm{SL}_{2}(\mathbb{Z}_p)$ for $p\neq 2$.  Nobs--Wolfart also provided a description of representations of $\mathrm{SL}_{2}(\mathbb{Z}_p)$ for all $p$~\cite{Nob1976, NobWolf1976}. Nagornyj provided a description  of representations of the group $\mathrm{GL}(2, \mathbb{Z}/p^{n}\mathbb{Z})$~\cite{Nag1981}. Beyond 
	these complete results, only partial progress is available in higher rank, 
	where families of representations of $\mathrm{GL}_{N}(\mathcal{O})$ have been 
	constructed by Aubert,~Onn,~Prasad,~Stasinski~\cite{Aubonnprasta2010}, Singla~\cite{Singla2010}, Crisp,~Meir,~Onn~\cite{Onn2024} and  Hill~\cite{ Hill1994, Hill21995}, though without full classification.

	To construct an irreducible representation of $\mathcal{K}$ of depth $d>0$, we select special elements $X$ in the Lie algebra $\mathfrak{k}$ of $\mathcal{K}$ whose centralizer $T(X)$ inside $\mathcal{K}$ is abelian.  
	Each such element $X$ determines a character $\Psi_{X}$ of depth $d$ of a subgroup $\mathcal{J}_{d}\subset\mathcal{K}$.
	
	\begin{maintheorem}[Theorem~\ref{Rep of K}]
		Let $\zeta$ be a character of $T(X)$ whose restriction to $T(X)\cap \mathcal{J}_{d}$ agrees with $\Psi_{X}$. 
		Let $\Psi_{X,\zeta}$ denote the unique extension of $\Psi_{X}$ and $\zeta$ to $T(X)\mathcal{J}_{d}$. 
		Then
		\[
		\mathcal{S}_{d}(X,\zeta)\;:=\;\mathrm{Ind}_{T(X)\mathcal{J}_{d}}^{\mathcal{K}}\Psi_{X,\zeta}
		\]
		is an irreducible representation of $\mathcal{K}$ of depth $d$ and degree $(q^{2}-1)q^{d-1}$.
	\end{maintheorem}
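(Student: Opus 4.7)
The plan is to verify the statement by applying Mackey's irreducibility criterion. Since $\mathcal{J}_d$ is an open subgroup of the compact group $\mathcal{K}$, the subgroup $H := T(X)\mathcal{J}_d$ has finite index, so $\mathrm{Ind}_H^{\mathcal{K}}\Psi_{X,\zeta}$ is finite-dimensional. Mackey's criterion then asserts that this induced character is irreducible if and only if, for every $g \in \mathcal{K}\setminus H$, there exists $h \in H \cap gHg^{-1}$ with $\Psi_{X,\zeta}(h) \neq \Psi_{X,\zeta}(g^{-1}hg)$.

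First I would dispatch the dimension claim by an index calculation. Using an Iwahori-type factorization of $\mathcal{K}$ together with the Moy--Prasad structure of $\mathcal{J}_d$ and the known size of the abelian centralizer $T(X)$, the index $[\mathcal{K}:T(X)\mathcal{J}_d]$ should compute directly to $(q^{2}-1)q^{d-1}$, matching the claimed degree. The depth assertion follows from the general principle that inducing a depth-$d$ character along a subgroup $H$ sandwiched between $\mathcal{K}_{d+}$ and $\mathcal{K}_{\lceil d/2 \rceil}$ yields a representation of depth $d$; one only needs to check that $\Psi_{X,\zeta}$ is trivial on $\mathcal{K}_{d+}\cap H$ and nontrivial on $\mathcal{K}_d\cap H$, both of which are encoded in the construction of $\Psi_X$.

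For the irreducibility step, I would exploit the defining formula of $\Psi_X$: on the prounipotent part of $\mathcal{J}_d$, it is a trace pairing of the form $\Psi_X(1+Y) = \psi(\mathrm{tr}(XY))$ for $Y$ in a suitable filtration piece of $\mathfrak{k}$. Under conjugation by $g \in \mathcal{K}$ this character transforms into the analogous one attached to $\mathrm{Ad}(g)X$. On the intersection $\mathcal{J}_d \cap g\mathcal{J}_d g^{-1}$, which contains a sufficiently deep Moy--Prasad subgroup, equality of these two characters forces $\mathrm{Ad}(g)X \equiv X$ modulo the appropriate dual lattice. The hypothesis that $T(X)$ is precisely the centralizer of $X$ in $\mathcal{K}$ then translates this congruence into $g \in T(X)\mathcal{J}_d = H$, which contradicts the choice of $g \notin H$. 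Any residual cases — where $g$ stabilizes $\Psi_X$ but still lies outside $H$ — are ruled out by choosing $\zeta$ to agree with $\Psi_X$ on $T(X)\cap \mathcal{J}_d$ but otherwise distinguish cosets of $T(X)/T(X)\cap\mathcal{J}_d$.

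The main obstacle is the Lie-algebra reduction passing from the congruence $\mathrm{Ad}(g)X \equiv X \pmod{\text{filtration}}$ to the group-theoretic containment $g \in T(X)\mathcal{K}_{\lceil d/2 \rceil}$. In the $\mathrm{U}(1,1)$ setting this analysis depends on the conjugacy type of $X$ — split, unramified elliptic, or a boundary case — and each type carries its own normalizer computation relying on the non-degeneracy of the trace form restricted to the corresponding eigenspace decomposition. A secondary subtlety is the usual parity distinction in $d$ that appears in such Moy--Prasad constructions, which may require slightly different arguments for $d$ even versus odd. Once the centralization reduction is handled case by case, the remainder of the argument consists of routine Mackey bookkeeping.
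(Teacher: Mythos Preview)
Your Mackey-criterion approach and the paper's Clifford-theory argument are close cousins: both hinge on computing the normalizer of $\Psi_X$, and your identification of the key Lie-algebra step (from $\mathrm{Ad}(g)X \equiv X$ modulo a filtration to $g \in T(X)\mathcal{K}_{\lceil d/2\rceil}$) matches the paper's Proposition~\ref{normalizerofpsiX}. The degree and depth claims go through essentially as you say.

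There is, however, a genuine gap in your treatment of the even-$d$ case. The congruence argument on $\mathcal{K}_{d/2+}$ only places $g$ in $T(X)\mathcal{K}_{d/2}$, which for even $d$ strictly contains $H = T(X)\mathcal{J}_d$. For $g \in \mathcal{K}_{d/2}\setminus \mathcal{J}_d$ (represented by lower-unipotent matrices $\begin{psmallmatrix}1&0\\ y\sqrt{\epsilon}\varpi^{d/2}&1\end{psmallmatrix}$) you must still show $\Psi_{X,\zeta}^g \neq \Psi_{X,\zeta}$ on $H\cap H^g$. Since $\mathcal{J}_d \trianglelefteq \mathcal{K}_{d/2}$, this intersection contains $\mathcal{J}_d$, where $\Psi_{X,\zeta}$ restricts to $\Psi_X$; so what is actually required is that such $g$ fail to normalize $\Psi_X$ \emph{as a character of $\mathcal{J}_d$}. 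This is a separate explicit calculation (the paper's Lemma~\ref{normalizerJd}) and is not handled by your proposed fix. Your suggestion of ``choosing $\zeta$ to distinguish cosets of $T(X)/(T(X)\cap\mathcal{J}_d)$'' is misdirected on two counts: $\zeta$ is given rather than chosen (the theorem must hold for every compatible $\zeta$), and the obstructing elements $g$ lie in $\mathcal{K}_{d/2}$, not in $T(X)$, so $\zeta$ never sees them. The paper's two-step Clifford argument separates the two normalizer computations cleanly; in your Mackey framing you would need to supply this second one by hand.
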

\vspace{-1em}	
Our principal result, stated precisely in Theorem~\ref{prop2} 
	and Corollary~\ref{thm3}, gives a complete description of the restriction of all principal series representations of $G$ to $\mathcal{K}$. 
	In terms of the notation introduced above, it may be summarized as follows.
	
	\begin{maintheorem}[Corollary~\ref{prop2}]
			Let $\chi$ be a character of \( T \) of minimal depth \( r \in \mathbb{Z}_{\geq 0} \). Then the restriction of the principal series representation \( \pi_{\chi} \) to $\mathcal{K}$ decomposes as direct sum of irreducible representations of $\mathcal{K}$ as follows
		\begin{equation*}
			\mathrm{Res}_{\mathcal{K}} \pi_{\chi} \cong
			\begin{cases}
				\mathrm{Ind}_{\mathcal{B}\mathcal{K}_{r+1}}^{\mathcal{K}} \chi \oplus \displaystyle\bigoplus_{d \geq r+1} \mathcal{S}_{d}(X, \zeta) & \text{if } r > 0, \\
				\mathrm{Ind}_{\mathcal{B}\mathcal{K}_{1}}^{\mathcal{K}} \chi \oplus \displaystyle\bigoplus_{d \geq 1} \mathcal{S}_{d}(X_{d}, \theta) & \text{if } r = 0 \text{ and } \chi \neq \mathbbm{1}, \\
				\mathbbm{1}_{q} \oplus \mathrm{St}_{q} \oplus \displaystyle\bigoplus_{d \geq 1} \mathcal{S}_{d}(X_{d}, \mathbbm{1}) & \text{if } r = 0 \text{ and } \chi = \mathbbm{1},
			\end{cases}
		\end{equation*}
			where:
		\begin{itemize}
			\item $	\mathrm{Ind}_{\mathcal{B}\mathcal{K}_{r+1}}^{\mathcal{K}} \chi $ is an irreducible representation of $\mathcal{K}$ of depth $r$;
			\item when $\chi$ has depth zero, $X_{d}$ is a nilpotent element and $\theta$ is the central character of $\pi_{\chi}$, while when $\chi$ has positive-depth, $X$ and $\zeta$ are twists of the datum used to construct $\pi_{\chi}$.
		\end{itemize}
	\end{maintheorem}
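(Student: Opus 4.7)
The strategy will be to match the canonical decomposition provided by Theorem~\ref{thm2} against the explicit $\mathcal{K}$-representations constructed in Theorem~\ref{Rep of K}. I will treat the ``bottom'' summand $V_{\chi}^{\mathcal{K}_{r+1}}$ and each ``higher-depth'' summand $\mathcal{W}_{d,\chi}$ separately, identifying each with the corresponding representation on the right-hand side of the claimed isomorphism.

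For the bottom summand, I would apply the Iwasawa decomposition $G=\mathcal{B}\mathcal{K}$, which realizes $\mathrm{Res}_{\mathcal{K}}\pi_\chi$ as $\mathrm{Ind}_{\mathcal{B}\cap\mathcal{K}}^{\mathcal{K}}\chi$. Since $\chi$ has depth $r$, its extension to $(\mathcal{B}\cap\mathcal{K})\mathcal{K}_{r+1}$ via triviality on $\mathcal{K}_{r+1}$ is well-defined, and the $\mathcal{K}_{r+1}$-fixed subspace identifies with $\mathrm{Ind}_{\mathcal{B}\mathcal{K}_{r+1}}^{\mathcal{K}}\chi$. For $r=0$ and $\chi=\mathbbm{1}$ this descends to the principal series of the finite group $\mathrm{U}(1,1)(\mathfrak{f})$, whose decomposition into $\mathbbm{1}_q\oplus\mathrm{St}_q$ is classical; for $r=0$ with $\chi\neq\mathbbm{1}$ the induced representation is an irreducible finite-group principal series; for $r>0$, irreducibility follows from Mackey's intertwining criterion, since the nontrivial Weyl element sends $\chi$ to a character distinct from it on the relevant overlap.

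The higher-depth summands are the main content. For the datum prescribed in the statement (diagonal $X$ and torus character $\zeta$ when $\chi$ has positive depth; nilpotent $X_d$ and central character $\theta$ when $\chi$ is depth zero), both $\mathcal{W}_{d,\chi}$ and $\mathcal{S}_d(X,\zeta)$ are irreducible of the same degree $(q^2-1)q^{d-1}$, so any nonzero $\mathcal{K}$-intertwiner is an isomorphism. By Frobenius reciprocity,
\[
\Hom_\mathcal{K}\!\bigl(\mathcal{S}_d(X,\zeta),\,\pi_\chi\bigr)\;\cong\;\Hom_{T(X)\mathcal{J}_d}\!\bigl(\Psi_{X,\zeta},\,\mathrm{Res}_{T(X)\mathcal{J}_d}\pi_\chi\bigr),
\]
and it suffices to exhibit one vector in $\pi_\chi$ on which $T(X)\mathcal{J}_d$ acts by $\Psi_{X,\zeta}$. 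Working in the model of $\pi_\chi$ as functions on $\mathcal{B}\backslash G$ and using the Bruhat decomposition of $\mathcal{K}$, this becomes a concrete calculation: one locates a support cell where $\mathcal{J}_d$ acts through its image in the unipotent radical (yielding $\Psi_X$) while $T(X)$ acts through $\chi$, respectively through $\theta$ on the center in the depth-zero case.

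The main obstacle is the depth-zero case. There is no diagonal Lie algebra element encoding a depth-zero $\chi$, so the inducing datum must instead be nilpotent, and the correct extension $\Psi_{X_d,\theta}$ is pinned down only by using the inclusion $Z(\mathcal{K})\subseteq T(X_d)$ together with the central character of $\pi_\chi$. Verifying that $\Psi_{X_d,\theta}$ --- and not some other extension of $\Psi_{X_d}$ --- is the character realized inside $\pi_\chi$ requires a careful orbit computation, since nilpotent elements do not parameterize characters of the torus directly. This is where the proof will demand the most care, and the connection to the nilpotent part of the local character expansion of $\pi_\chi$ is what makes the depth-zero identification natural.
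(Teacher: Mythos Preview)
Your overall framework matches the paper's: invoke Theorem~\ref{thm2} for the canonical decomposition, handle $V_\chi^{\mathcal{K}_{r+1}}$ as you describe, and for each $d\geq r+1$ exhibit a nonzero intertwiner between $\mathcal{S}_d$ and $\pi_\chi$, then conclude by matching depth and degree. However, there is a genuine gap in the positive-depth case, and your difficulty assessment is inverted.

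The element $X$ in Theorem~\ref{Rep of K} is \emph{not} diagonal: it must have the form~\eqref{element X}, namely $X = X(z)+\widetilde{X}(u,v)$ with equal diagonal entries $z\sqrt{\epsilon}$ and off-diagonal entries satisfying $\nu(v)>\nu(u)=-d$. The diagonal element $\Gamma\in\mathfrak{t}_{-r}$ that realizes $\chi$ via the Moy--Prasad isomorphism does not have this shape, so you cannot feed it directly into Theorem~\ref{Rep of K}. The paper's key step, which is absent from your outline, is to produce an explicit $g_d\in G$ conjugating $\Gamma$ into an element $Y_\chi=\Gamma^{g_d}$ of the required form, and then to set $\zeta_\chi=\chi^{g_d}$. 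One must then choose a specific double coset representative $\alpha$ (not the identity) and verify the two identities $\chi(\alpha t\alpha^{-1})=\zeta_\chi(t)$ and $\chi(\alpha j\alpha^{-1})=\Psi_{Y_\chi}(j)$ on $T(Y_\chi)$ and on the upper-triangular part of $\alpha\mathcal{J}_d\alpha^{-1}$ respectively; the second of these is a genuine computation using~\eqref{eq:chi on T} and a cancellation coming from the vanishing of the $(2,1)$-entry. Your phrase ``locates a support cell where $\mathcal{J}_d$ acts through its image in the unipotent radical'' does not capture this, because $\mathcal{J}_d$ does not land in the unipotent radical under any coset representative.

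Conversely, the depth-zero case --- which you flag as the main obstacle --- is in fact the easy one in the paper: the nilpotent $X_{\varpi^{-d}}$ is already of the form~\eqref{element X} (with $v=z=0$), its centralizer is $Z\mathcal{U}$, and on $\mathcal{B}\cap Z\mathcal{U}\mathcal{J}_d$ both $\chi$ and $\Psi_{X_{\varpi^{-d}},\theta}$ reduce immediately to $\theta$ on $Z$ and triviality elsewhere. No orbit computation or appeal to the local character expansion is needed.
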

	

	More recently, there has been growing interest in understanding the decomposition of 
	representations of a group upon restriction to smaller neighborhoods of the identity, 
	with the expectation that such decompositions should involve representations of a simpler nature. 
	Some recent work in this direction includes results of M.~Nevins on complex representations 
	of $\mathrm{SL}(2,F)$ for $p \neq 2$~\cite{Mon2024}, and the analysis of depth-zero 
	supercuspidal representations of $\mathrm{SL}(2,F)$ in characteristic $2$, due to 
	Z.~Karaganis and M.~Nevins~\cite{ZanMon2025}. 
	Related questions were also studied by G.~Henniart and M.~F.~Vignéras 
	\cite{henniart2024representationssl2f, guy2024representationsglndnearidentity}, 
	who considered representations of $\mathrm{SL}(2,F)$ and of inner forms of $\mathrm{GL}(n,F)$ 
	over fields $\mathcal{R}$ of characteristic different from $p$. These ideas lead to a set of conjectures two of which we solve for $\mathbb{U}(1,1)(F)$. In this article, we present the solution for principal series representations, the second article provides analogous results for the other two classes of representations of $G$.

	To state our solution, we first observe that the higher-depth components appearing in the restriction of any irreducible principal series representation $\pi$ of $G$ already coincide with those arising from depth-zero principal series representations with the same central character after potentially twisiting by a character of $G$.
	
	\begin{maintheorem}[Theorem~\ref{keyidentificationp}]
	Let $\chi$ be a character of \( T \) of minimal depth \( r \in \mathbb{Z}_{\geq 0} \) and let $\pi_{\chi}$ be the associated principal series representation  with depth-zero central character $\theta$. Then, for all $d>2r$,
		\[
		\mathcal{S}_{d}(X, \zeta)\cong \mathcal{S}_{d}(X_{d}, \theta),
		\]
		where $X_{d}$ is a nilpotent element.
	\end{maintheorem}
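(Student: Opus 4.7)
My plan is to establish the isomorphism by producing an explicit equivalence of the inducing data for $\mathcal{S}_d(X,\zeta)$ and $\mathcal{S}_d(X_d,\theta)$, exploiting the stability phenomenon forced by the hypothesis $d > 2r$.

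\textbf{Setup.} First I would unpack the constructions on the two sides. For $\chi$ of minimal depth $r>0$, the element $X$ arising in the construction of $\pi_\chi$ is a regular semisimple (diagonal or elliptic) element of valuation $-r$ encoding the wild part of $\chi$; its centralizer $T(X)$ is the associated torus, and $\zeta$ is the extension of $\Psi_X$ to $T(X)$ obtained from $\chi$ itself, so that $\zeta$ restricted to the center recovers the central character $\theta$ of $\pi_{\chi}$. On the other side, $X_d$ is a regular nilpotent representative of depth $-d$, $T(X_d)$ is (essentially) the center times a unipotent stabilizer, and the chosen extension is by $\theta$. A direct index computation confirms $[\mathcal{K} : T(X)\mathcal{J}_d] = [\mathcal{K} : T(X_d)\mathcal{J}_d] = (q^2-1)q^{d-1}$, consistent with Theorem~\ref{Rep of K}.

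\textbf{Core stability step.} The heart of the proof is the following observation: the character $\Psi_X$ on the pro-unipotent subgroup $\mathcal{J}_d$ is given by the Moy--Prasad pairing $k \mapsto \psi(\mathrm{tr}(X \log k))$ and depends on $X$ only modulo the Lie-algebra filtration at level $d-r$. Since $d > 2r$ gives $d-r > r$, the semisimple component $X_s$ of $X$ (of valuation $-r$) can be shifted away: for any $Y$ in the $(d-r)$-th filtration piece, $X_s Y$ lies in $\mathcal{O}_F$, so $\psi(\mathrm{tr}(X_s Y)) = 1$. Consequently $\Psi_X|_{\mathcal{J}_d}$ coincides with $\Psi_{X'}|_{\mathcal{J}_d}$ for a purely nilpotent $X'$ congruent to $X$ modulo this filtration, and a conjugation by a suitable $g \in \mathcal{K}$ then transports $X'$ into $X_d$ up to the same filtration level, identifying the two characters on $\mathcal{J}_d$.

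\textbf{Matching the extensions off of $\mathcal{J}_d$ --- the main obstacle.} The delicate point is handling the torus extensions, and this is what I expect to be the principal obstacle. The centralizers $T(X)$ and $T(X_d)$ are not conjugate in $\mathcal{K}$ (they have different isomorphism type as $\mathcal{K}$-subgroups), so $\zeta$ cannot be transported directly onto $T(X_d)$. My approach is to reduce the question to the natural overlap of the two inducing subgroups, namely $Z \cdot \mathcal{J}_d$ where $Z$ is the center: both $\Psi_{X,\zeta}$ and $\Psi_{X_d,\theta}$ restrict to the central character $\theta$ on $Z$, and because $d-r > r$ the depth-$r$ part of $\chi$ becomes invisible on $T(X) \cap \mathcal{J}_d \subseteq \mathcal{K}_{d-r}$, so the restrictions of $\zeta$ and $\theta$ to this intersection agree. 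Combined with the identification of $\Psi_X|_{\mathcal{J}_d}$ with $\Psi_{X_d}|_{\mathcal{J}_d}$ from the previous step, this gives an equivalence of inducing data after the conjugation by $g$; invoking irreducibility of both sides (Theorem~\ref{Rep of K}) and a Mackey-type intertwining comparison then yields $\mathcal{S}_d(X,\zeta) \cong \mathcal{S}_d(X_d,\theta)$.
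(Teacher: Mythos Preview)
Your core intuition --- that for $d > 2r$ the non-nilpotent part of the element governing $\Psi_X$ becomes invisible on $\mathcal{J}_d$, so that $\Psi_X$ coincides there with $\Psi_{X_d}$ for a nilpotent $X_d$ --- is correct and is exactly the content of Lemma~\ref{d>2rcong}. However, several details are off, and more importantly you miss the observation that collapses your ``principal obstacle'' entirely.

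First, two corrections to your setup. The element $X$ (denoted $Y_\chi$ in the body of the paper) is not of valuation $-r$: it is the $g_d$-conjugate of the diagonal datum $\Gamma\in\mathfrak{t}_{-r}$ and has the explicit shape $\begin{psmallmatrix} x_2\sqrt{\epsilon} & \epsilon^{-1}\varpi^{-d}\sqrt{\epsilon} \\ x_1^2\varpi^{d}\sqrt{\epsilon} & x_2\sqrt{\epsilon}\end{psmallmatrix}$, with $(1,2)$-entry of valuation $-d$. Also, $\Psi_X$ on $\mathcal{J}_d$ depends on $X$ modulo $\mathfrak{k}_{-\lceil d/2\rceil}$, not modulo the ``level $d-r$'' filtration; the hypothesis $d>2r$ is precisely what forces the diagonal entries (valuation $\geq -r$) and the $(2,1)$-entry (valuation $\geq d-2r>0$) into $\mathfrak{k}_{-\lceil d/2\rceil}$, leaving only the nilpotent upper-right piece.

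The real gap is your third step. You treat the mismatch between $T(X)$ and $T(X_d)$ as a genuine obstruction requiring a Mackey comparison on the overlap $Z\mathcal{J}_d$. But the paper observes something much stronger (Lemma~\ref{d>2rcong}): since $\Psi_X=\Psi_{X_d}$ as characters of $\mathcal{J}_d$, their normalizers coincide, and hence the inducing subgroups are \emph{literally equal}, $T(X)\mathcal{J}_d = T(X_d)\mathcal{J}_d = Z\mathcal{U}\mathcal{J}_d$. No Mackey argument is needed; one only has to check that the inducing characters $\Psi_{X,\zeta_\chi}$ and $\Psi_{X_d,\theta}$ agree on this common group. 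On $\mathcal{J}_d$ this is the stability step; on $Z\mathcal{U}$ the paper computes directly that $g_d^{-1} t g_d \in Z\,T_{r+1}$ for every $t\in T(X)$ (using $\gamma\in\mathfrak{p}_E^{d-r}\subseteq\mathfrak{p}_E^{r+1}$), so $\zeta_\chi(t)=\chi(g_d^{-1}tg_d)$ reduces to the central character $\theta$. Your Mackey route might be completable, but as written it is underspecified, and it overlooks the structural fact that makes the proof a one-line identification of characters rather than an intertwining calculation.
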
	
\vspace{-1em}	
	In fact, we show that one can choose two depth-zero principal series representations such that the higher-depth components of any principal series representation $\pi_{\chi}$, upon restriction to $\mathcal{K}$, coincide with the higher-depth components, upon restriction to $\mathcal{K}$, of one these two fixed representations, up to twisting by a character of $G$.

	For the other conjecture, we note our unitary group $G$ admits exactly two non-zero nilpotent $G$-orbits in its Lie algebra $\mathfrak{g}$, which we denote by $\mathcal{N}_{1}$ and $\mathcal{N}_{\varpi}$. To each of these orbits, we associate a highly reducible representation
	\[
	\begin{aligned}
		\tau_{\mathcal{N}_{1}}(\theta) &= \bigoplus_{d \in 2\mathbb{Z}_{> 0}} \mathcal{S}_{d}(X_{d}, \theta), \qquad
		\tau_{\mathcal{N}_{\varpi}}(\theta) = \bigoplus_{d \in 2\mathbb{Z}_{\geq 0} + 1 } \mathcal{S}_{d}(X_{d}, \theta),
	\end{aligned}
	\]
	where $\theta$ is a character of the center of $G$. Then, in the Grothendieck group of representations, we obtain the following decomposition.
	
	\begin{maintheorem}[Theorem~\ref{principalseriesrestrictedtoK2r}]
	Let $\chi$ be a character of \( T \) of minimal depth \( r \in \mathbb{Z}_{\geq 0} \), let $\pi_{\chi}$ be the associated principal series representation, and let $\theta$ denote its central character. Then 
		\[
		\mathrm{Res}_{\mathcal{K}_{2r+1}}\pi
		\;=\;
		(q+1)\, \mathbbm{1}
		\;\oplus\;
		a_{\mathcal{N}_{1}}\,\mathrm{Res}_{\mathcal{K}_{2r+1}}\bigl( \tau_{\mathcal{N}_{1}}(\theta) \bigr)
		\;\oplus\;
		a_{\mathcal{N}_{\varpi}}\,\mathrm{Res}_{\mathcal{K}_{2r+1}}\bigl( \tau_{\mathcal{N}_{\varpi}}(\theta) \bigr),
		\]
		where $a_{\mathcal{N}_{1}}, a_{\mathcal{N}_{\varpi}} \in \{0,1\}$.
	\end{maintheorem}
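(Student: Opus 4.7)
The plan is to derive the decomposition directly from the fully explicit branching rule for $\mathrm{Res}_{\mathcal{K}}\pi_{\chi}$ given in Corollary~\ref{prop2}, by further restricting each summand to the congruence subgroup $\mathcal{K}_{2r+1}$ and then repackaging the result into the shape involving $\tau_{\mathcal{N}_{1}}(\theta)$ and $\tau_{\mathcal{N}_{\varpi}}(\theta)$. The two key inputs are the depth of each irreducible summand of $\mathrm{Res}_{\mathcal{K}}\pi_{\chi}$ and the identification $\mathcal{S}_{d}(X,\zeta)\cong\mathcal{S}_{d}(X_{d},\theta)$ for $d>2r$ supplied by Theorem~\ref{keyidentificationp}.

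First I would note that every summand of $\mathrm{Res}_{\mathcal{K}}\pi_{\chi}$ whose depth is at most $2r$ factors through $\mathcal{K}/\mathcal{K}_{2r+1}$, and hence on further restriction to $\mathcal{K}_{2r+1}$ collapses to a direct sum of copies of $\mathbbm{1}$ whose multiplicity equals its dimension. Combining $\dim\mathrm{Ind}_{\mathcal{B}\mathcal{K}_{r+1}}^{\mathcal{K}}\chi=(q+1)q^{r}$ (with the spherical-case replacement $\mathbbm{1}_{q}\oplus\mathrm{St}_{q}$ also of dimension $q+1$) and $\dim\mathcal{S}_{d}(X,\zeta)=(q^{2}-1)q^{d-1}$, a short geometric-series computation gives the total trivial contribution from all terms of depth $\leq 2r$ as
\[
(q+1)q^{r}+\sum_{d=r+1}^{2r}(q^{2}-1)q^{d-1}\;=\;(q+1)q^{2r}.
\]
For the remaining summands with $d>2r$ I would apply Theorem~\ref{keyidentificationp} to replace each $\mathcal{S}_{d}(X,\zeta)$ by the isomorphic $\mathcal{S}_{d}(X_{d},\theta)$, now built from nilpotent data sharing the central character of $\pi_{\chi}$.

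To put these high-depth tails into the required form I would add and subtract the low-depth pieces $\mathcal{S}_{d}(X_{d},\theta)$ for $1\leq d\leq 2r$, completing the even-index tail to $\tau_{\mathcal{N}_{1}}(\theta)$ and the odd-index tail to $\tau_{\mathcal{N}_{\varpi}}(\theta)$. The low-depth pieces added back again restrict to $\mathcal{K}_{2r+1}$ trivially, contributing
\[
\sum_{d=1}^{2r}(q^{2}-1)q^{d-1}\;=\;(q+1)(q^{2r}-1)
\]
copies of $\mathbbm{1}$; subtracting this from the $(q+1)q^{2r}$ reservoir of the previous paragraph leaves exactly $(q+1)\mathbbm{1}$, which is the leading term asserted by the theorem.

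The main obstacle is pinning down when each of the coefficients $a_{\mathcal{N}_{1}},a_{\mathcal{N}_{\varpi}}\in\{0,1\}$ actually equals $1$. Once the dimension bookkeeping above is settled, what remains is to analyze the nilpotent orbit of the representative $X_{d}$ produced by Theorem~\ref{keyidentificationp} as $d$ varies over integers $>2r$: one must verify that even-$d$ representatives lie in $\mathcal{N}_{1}$ and odd-$d$ representatives lie in $\mathcal{N}_{\varpi}$ (or determine which parities actually occur for the given $\pi_{\chi}$), with any parity that fails to appear forcing the corresponding coefficient to vanish. Apart from this orbit-parity analysis, which is internal to the construction of $X_{d}$ rather than to the Grothendieck-group rearrangement, the argument is purely formal.
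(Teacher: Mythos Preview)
Your approach is essentially the same as the paper's: restrict the explicit branching rule of Corollary~\ref{thm3} to $\mathcal{K}_{2r+1}$, replace the high-depth pieces via Theorem~\ref{keyidentificationp}, and balance dimensions. Your geometric-series computations agree with the paper's (which organizes them via Lemma~\ref{dimensionofrepresentationsassociatedtonilpotentorbitsofG} rather than inline).

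One remark: the ``main obstacle'' you flag is not actually an obstacle here. In the paper $X_{d}=X_{\varpi^{-d}}$, and by Lemma~\ref{nilpotentKorbits} together with the definitions of $\tau_{\mathcal{N}_{1}}(\theta)$ and $\tau_{\mathcal{N}_{\varpi}}(\theta)$, the even-$d$ summands assemble into $\tau_{\mathcal{N}_{1}}(\theta)$ and the odd-$d$ summands into $\tau_{\mathcal{N}_{\varpi}}(\theta)$ by construction. Since the branching rule produces $\mathcal{S}_{d}$ for \emph{every} $d>2r$, both parities occur and the paper obtains $a_{\mathcal{N}_{1}}=a_{\mathcal{N}_{\varpi}}=1$ for all principal series (the $\{0,1\}$ phrasing in the introduction anticipates the supercuspidal case treated elsewhere). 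So no further orbit analysis is needed.
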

\vspace{-1em}	
	Thus every constituent of $\mathrm{Res}_{\mathcal{K}_{2r+}}\pi$ is either the trivial representation or constructed from nilpotent elements $X_{d}$ in the Lie algebra of $G$.

	This article is organized as follows. In~\S\ref{notation and background}, we present the necessary background, including results on \(p\)-adic fields, the structure theory of the group \(G\), and several classical results and definitions from the representation theory of \(p\)-adic groups, together with examples adapted to our setting. In~\S\ref{construction of representations and reduction lemmas}, we describe all principal series representations of \(G\) and establish several reduction lemmas. In~\S\ref{a canonical decomposition}, we obtain a canonical decomposition into irreducible \(\mathcal{K}\)-representations. In~\S\ref{chapter3}, we construct explicit irreducible representations of the maximal compact subgroup \(\mathcal{K}\), adapting Shalika’s method~\cite{Sha2004} to the setting of \(G\). In~\S\ref{an explicit decomposition}, we present an explicit decomposition of all principal series representations upon restriction to \(\mathcal{K}\) in terms of these representations. We conclude in~\S\ref{restrictiontoK2rprincipalseries} with applications of our main results.
	\vspace{-1em}
	
	\subsection*{Acknowledgements}
	I would like to thank  Monica Nevins, my PhD supervisor, for suggesting this problem and for her invaluable guidance and moral support throughout the development of this work. This paper could not have been written without her kind assistance. I would also like to thank the University of Ottawa for providing a stimulating research environment.

\section{Notations and background}\label{notation and background}
\subsection{The field}\label{the field}	Let $F$ be a non-archimedean local field, and let $E = F[\sqrt{\epsilon}]$ be the quadratic unramified extension of $F$, where $\epsilon$ is a non-square element of $\mathcal{O}_{F}^{\times}$.  We fix $\varpi$ a uniformiser of $F$ and normalize the valuation $\nu$ on $F$ such that $\nu(\varpi) = 1$. The valuation $\nu$ is extended to $E$, and we use the same symbol $\nu$ for this extension. We define the \emph{absolute value} (also called the analytic norm) associated with $\nu$, denoted by $|\cdot|$, as
\[
|x| := q^{-\nu(x)}, \quad x \in E,
\]
where $q = |\mathcal{O}_{F}/\mathfrak{p}_{F}|$ is the cardinality of the residue field of $F$. We denote the ring of integers of $F$ by $\mathcal{O}_{F}$ and that of $E$ by $\mathcal{O}_{E}$.  
Their respective maximal ideals are denoted by $\mathfrak{p}_{F}$ and $\mathfrak{p}_{E}$.  
The residue fields of $F$ and $E$ are given by $\mathfrak{f} \cong \mathcal{O}_{F}/\mathfrak{p}_{F}$ and $\mathfrak{e} \cong \mathcal{O}_{E}/\mathfrak{p}_{E},$ respectively.  
Since  the cardinality of $\mathfrak{f}$ is $q$, the cardinality of $\mathfrak{e}$ is $q^{2}$. Let $\sigma$ denote the nontrivial element of $\mathrm{Gal}(E/F)$. For $x \in E$, we write $\overline{x} := \sigma(x)$. The \emph{norm} and \emph{trace} from $E$ to $F$ are defined by
\[
\mathrm{N}_{E/F}(x) = x \, \overline{x}, \quad  \mathrm{Tr}_{E/F}(x) = x + \overline{x}, \quad \text{for\;all } x \in E.
\]
We denote by \( E^1 \subset E^\times \) the subgroup consisting of elements of norm one, \emph{i.e.}, $E^1 := \{ x \in E^\times \mid \mathrm{N}_{E/F}(x) = 1 \}.$
The norm map induces a surjection $\mathrm{N}_{E/F} \colon \mathcal{O}_E^\times \twoheadrightarrow \mathcal{O}_F^\times,$ and  this filters to a surjection $\mathrm{N}_{E/F} \colon 1 + \mathfrak{p}_E^d \twoheadrightarrow 1 + \mathfrak{p}_F^d$, for every $d\in\mathbb{Z}_{\geq 1}$. We fix an additive character $\psi'$ of $F$, which is trivial on $\mathfrak{p}_{F}$ but nontrivial on $\mathcal{O}_{F}$. We then define a character $\psi$ of $E$ by  
	\[
	\psi(x) = \psi^{\prime}\left(\frac{x + \overline{x}}{2}\right), \quad \text{for all } x \in E.
	\]
	Note that $\psi$ is also trivial on $\mathfrak{p}_{E}$ and nontrivial on $\mathcal{O}_{E}$.

\subsection{Some structure theory}\label{the group} We adopt the convention of using blackboard bold font to denote an algebraic group \( \mathbb{G} \) defined over \( F \), and the corresponding roman letter for its group of \( F \)-rational points, that is, \( G = \mathbb{G}(F) \). For simplicity of notation, we may refer to a group \( T \) as a ``maximal torus of \( G \)" when we mean that \( T \) is the group of \( F \)-rational points of an algebraic torus \( \mathbb{T} \subset \mathbb{G} \) that is defined over $F$.

Given subsets \( L_i \) of \( F \) or \( E \), we use the shorthand notation
\[
\left\{\begin{pmatrix}L_1 & L_2 \\ L_3 & L_4 \end{pmatrix}\right\}
:= 
\left\{
\begin{pmatrix}
	a_1 & a_2 \\
	a_3 & a_4
\end{pmatrix}
\;\middle|\; a_i \in L_i
\right\}.
\]
For \( g \in G \) and a subgroup \( K \subset G \), we write \( K^g := gKg^{-1} \) and  \( {}^gK := g^{-1}Kg \).

\subsubsection{The group $\mathrm{U}(1,1)$} Let $\mathbb{G}$ denote the quasi-split unitary group $\mathbb{U}(1,1)$ whose group of $F$-points is  
\begin{align*}
	G:=\mathbb{G}(F) = \left\{g \in M_{2}(E) \mid \overline{g}^{\top}\mathrm{w}g = \mathrm{w}\right\},
\end{align*}
where $\mathrm{w}=\begin{pmatrix}0& 1\\ 1&0\end{pmatrix}$, and  $\overline{g}^{\top}=(\overline{g_{ij}})^{\top}=(\overline{g_{ji}})$.
More explicitly, $G$ has the form
\begin{align}\label{definitionofG}
	G= & \left\{
	\begin{pmatrix}
		a & b \\
		c & d
	\end{pmatrix}
	\in M_{2}(E)
	\; \middle| \;
	\begin{aligned}
		& \overline{a}d + \overline{c}b = 1, \\
		& \overline{a}c, \overline{b}d \in \sqrt{\epsilon}F
	\end{aligned}
	\right\}.
\end{align}	
The Lie algebra $\mathfrak{g}$ of $G$ is defined as $\{X\in \mathfrak{gl}(2,E)\;|\;\overline{X}^{\top}\mathrm{w}+\mathrm{w}X=0\}$ and thus has the form
$$
\mathfrak{g} := \mathfrak{u}(1,1)(F) = \left\{
\begin{pmatrix}
	u+\sqrt{\epsilon}v & y\sqrt{\epsilon} \\
	z\sqrt{\epsilon} & -u+\sqrt{\epsilon}v
\end{pmatrix}
\;\middle|\; u, v, y, z \in F
\right\}.
$$ 
We use $\mathfrak{g}^{*}$ to denote the algebraic dual of the Lie algebra of $\mathfrak{g}$.
The center of $G$ is the subgroup $Z$ of scalar matrices, which we identify with $E^{1}$. 
	
Let $\mathbb{T}$ be the diagonal torus of $\mathbb{U}(1,1)$, whose group of $F$-points is 			
\begin{equation}\label{definitionofmaximallyFsplitFtorus}
	T:=\mathbb{T}(F)=\left\{\begin{pmatrix}
		a&0\\ 0&\overline{a}^{-1}
	\end{pmatrix}\mid a\in E^{\times}\right\}.\end{equation} 
We have $\mathbb{T}(F)\cong E^\times$ is a maximally $F$-split maximal $F$-torus of $G$.  Any other maximal torus of $G$ is a $G$-conjugate of $T$. The maximal $F$-split subtorus of $\mathbb{T}$ is $\mathbb{S}$ where
\begin{equation}\label{definitionofS} S:=\mathbb{S}(F)=\left\{\begin{pmatrix}
	a&0\\ 0&a^{-1}
\end{pmatrix}\mid a\in F^{\times}\right\}.\end{equation}
Similar to the case of \(\mathrm{SL}(2,F)\), the group \(G\) also has two conjugacy classes of maximal compact subgroups, represented by 
$$\mathcal{K}
	=\mathbb{G}(\mathcal{O}_{E})
	=\begin{pmatrix}
		\mathcal{O}_{E} & \mathcal{O}_{E} \\
		\mathcal{O}_{E} & \mathcal{O}_{E}
	\end{pmatrix}\cap G\;\hspace{2em}\text{and}\;\hspace{2em}\mathcal{K}^{\eta}
	=\begin{pmatrix}
		\mathcal{O}_{E} & \mathfrak{p}_{E}^{-1} \\
		\mathfrak{p}_{E} & \mathcal{O}_{E}
	\end{pmatrix}\cap G,$$
where 
	\(\eta=\begin{psmallmatrix}1&0\\ 0&\varpi\end{psmallmatrix}\in\mathrm{GL}(2,E)\) is such that it normalizes our unitary group \(G\).
	\begin{lemma}\label{Kproperty}
		Let $\begin{pmatrix}x&y \\z&w\end{pmatrix}\in \mathcal{K}$. Then $x, w\in\mathcal{O}_{E}^{\times}$ or $y, z\in \mathcal{O}_{E}^{\times}$.
	\end{lemma}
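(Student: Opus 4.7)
The plan is to exploit the single equation coming from the unitary condition restricted to $\mathcal{K}$ and read off the needed unit information from valuations.

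First I would unpack the hypothesis: since $\begin{psmallmatrix}x&y\\z&w\end{psmallmatrix}\in\mathcal{K}=\mathbb{G}(\mathcal{O}_E)$, all four entries lie in $\mathcal{O}_E$, and from the defining relations of $G$ in~\eqref{definitionofG} we in particular have
\[
\overline{x}\,w + \overline{z}\,y \;=\; 1.
\]
The other conditions $\overline{x}z,\overline{y}w\in\sqrt{\epsilon}\,F$ will not be needed here; they cut out $G$ inside $\mathrm{GL}_2(E)$ but play no role in this valuation argument.

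Next I would observe that since $\mathfrak{p}_E$ is a proper ideal of $\mathcal{O}_E$, if both $\overline{x}w$ and $\overline{z}y$ were to lie in $\mathfrak{p}_E$, then their sum would lie in $\mathfrak{p}_E$, contradicting $\overline{x}w+\overline{z}y=1$. Consequently at least one of the two products is a unit in $\mathcal{O}_E^\times$.

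Finally I would conclude using the fact that Galois conjugation preserves valuation, so $u\in\mathcal{O}_E^\times$ if and only if $\overline{u}\in\mathcal{O}_E^\times$. If $\overline{x}w\in\mathcal{O}_E^\times$, then both factors $x$ and $w$ are units, giving the first alternative; if $\overline{z}y\in\mathcal{O}_E^\times$, then both $z$ and $y$ are units, giving the second. There is no genuine obstacle here: the argument is a one-line pigeonhole on valuations, and the main point worth emphasizing is simply that the dichotomy is forced by the sum equal to $1$ rather than by the anisotropic conditions on $\overline{x}z$ and $\overline{y}w$.
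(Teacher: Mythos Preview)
Your proof is correct. The paper argues instead from the determinant: since $k\in\mathcal{K}$ forces $\det(k)=xw-yz\in\mathcal{O}_E^\times$, one gets $0=\nu(xw-yz)\geq\min\{\nu(xw),\nu(yz)\}\geq0$, so one of $xw$ or $yz$ is a unit and the conclusion follows. You use the unitary relation $\overline{x}w+\overline{z}y=1$ from~\eqref{definitionofG} in place of the determinant equation, and then the same valuation pigeonhole. The paper's route applies verbatim to any matrix in $\mathrm{GL}_2(\mathcal{O}_E)$, while yours is specific to the unitary group but avoids the preliminary remark that $\det(k)$ must be a unit; otherwise the arguments are essentially parallel.
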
	
	\begin{proof}
		Note that $\mathrm{det}(k)\in\mathcal{O}_{E}^{\times}$ for all $k\in\mathcal{K}$, as otherwise the inverse wouldn't be in the group. Therefore, if $k:=\begin{pmatrix}x&y \\z&w\end{pmatrix}\in \mathcal{K}$, then we have
		$0=\nu(xw-yz)\geq \mathrm{min}\{\nu(xw), \nu(yz)\}\geq 0.$
		Thus at least one of $xw$ or $yz$ must have valuation $0$, and hence $x,w\in\mathcal{O}_{E}^{\times}$ or $y,z\in\mathcal{O}_{E}^{\times}$.
	\end{proof}
	
We denote by $\mathcal{B}$ the intersection of the Borel subgroup $B$ of $G$ with $\mathcal{K}$.

	\begin{proposition}\label{double coset for principal series}
		A set of double coset representatives for $\mathcal{B} \backslash \mathcal{K} / \mathcal{B}$ is
		\[
		\left\{
		\begin{pmatrix}1&0\\ 0&1\end{pmatrix},\quad
		\begin{pmatrix}0&1\\ 1&0\end{pmatrix},\quad
		\begin{pmatrix}1&0\\ \sqrt{\epsilon}\,\varpi^{k}&1\end{pmatrix}
		\;\middle|\; k \ge 1
		\right\}.
		\]
	\end{proposition}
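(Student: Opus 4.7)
The plan is to prove two things: (a) every element of $\mathcal{K}$ lies in one of the proposed double cosets; and (b) the three families of listed representatives are pairwise inequivalent. For (b), I would use the valuation of the $(2,1)$-entry as a $\mathcal{B}$-bi-invariant: for any $b_1, b_2 \in \mathcal{B}$, the $(2,1)$-entry of $b_1 g b_2$ equals the $(2,1)$-entry of $g$ times a unit of $\mathcal{O}_E$, since $b_1, b_2$ are upper-triangular with unit diagonal entries. The three listed families then have $(2,1)$-valuations $+\infty$, $0$, and $k$ respectively, so they are pairwise disjoint.

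For (a), I would write a generic element as $g = \begin{pmatrix}a&b\\c&d\end{pmatrix} \in \mathcal{K}$ and split on the valuation of $c$. If $c = 0$, then $g \in \mathcal{B}$ itself. If $\nu(c) = 0$, the aim is to reduce $g$ to $\mathrm{w}$ by $\mathcal{B}$-restricted Gaussian elimination: first left-multiply by $\diag(\overline{c}, c^{-1}) \in \mathcal{B}$ to normalize the $(2,1)$-entry to $1$; then right-multiply by $\begin{pmatrix}1 & -d/c \\ 0 & 1\end{pmatrix}$ to clear the $(2,2)$-entry (this factor lies in $\mathcal{B}$ because the identity $c\overline{d}+d\overline{c}=0$, obtained by applying the defining relations to the inverse matrix, gives $d\overline{c} \in \sqrt{\epsilon}F$ and hence $d/c \in \sqrt{\epsilon}\mathcal{O}_F$); and finally left-multiply by $\begin{pmatrix}1 & -\overline{c}a \\ 0 & 1\end{pmatrix}$ to clear the $(1,1)$-entry (valid since $\overline{c}a = -\overline{a}c \in \sqrt{\epsilon}\mathcal{O}_F$).

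If $\nu(c) = k \geq 1$, then Lemma~\ref{Kproperty} forces $a, d \in \mathcal{O}_E^\times$. I would right-multiply by $\diag(\overline{d}, d^{-1})$ to set $(2,2) = 1$, then left-multiply by $\begin{pmatrix}1 & -b/d \\ 0 & 1\end{pmatrix}$ to clear $(1,2)$ (valid because $\overline{b}d \in \sqrt{\epsilon}F$ implies $b/d \in \sqrt{\epsilon}\mathcal{O}_F$). The result is lower-triangular; its $(1,1)$-entry simplifies to $1$ via the identity $\det(g) = d/\overline{d}$ (a direct consequence of $\overline{a}d+\overline{c}b=1$), and its $(2,1)$-entry equals $c\overline{d} = \sqrt{\epsilon}u\varpi^k$ for some $u \in \mathcal{O}_F^\times$. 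Conjugating by $\diag(\alpha, \overline{\alpha}^{-1}) \in \mathcal{B}$ with $\mathrm{N}_{E/F}(\alpha) = u$—which exists by surjectivity of $\mathrm{N}_{E/F}\colon \mathcal{O}_E^\times \twoheadrightarrow \mathcal{O}_F^\times$ recalled in \S\ref{the field}—then rescales the $(2,1)$-entry to $\sqrt{\epsilon}\varpi^k$ and yields the claimed representative.

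The main subtlety is that the unipotent part of $\mathcal{B}$ is strictly smaller than the off-diagonal $\mathcal{O}_E$-line: its off-diagonal entries must lie in $\sqrt{\epsilon}\mathcal{O}_F$. Every elimination step therefore depends on a specific consequence of the unitary relations (namely $\overline{a}c,\,\overline{b}d,\,c\overline{d},\,d\overline{c} \in \sqrt{\epsilon}F$), so the argument is really a careful bookkeeping exercise with those relations rather than generic row reduction. The rescaling in the last case additionally relies on surjectivity of the norm on units, a feature specific to the unramified extension $E/F$.
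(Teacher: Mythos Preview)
Your proposal is correct and follows essentially the same approach as the paper: both split on the valuation of the $(2,1)$-entry, perform $\mathcal{B}$-restricted Gaussian elimination using the unitary relations to ensure the unipotent factors have off-diagonal entries in $\sqrt{\epsilon}\,\mathcal{O}_F$, and invoke norm surjectivity on units for the final rescaling. Your distinctness argument---observing that the valuation of the $(2,1)$-entry is a $\mathcal{B}$-bi-invariant---is in fact cleaner and more complete than the paper's, which only explicitly verifies that the representatives $\begin{psmallmatrix}1&0\\ \sqrt{\epsilon}\varpi^{k}&1\end{psmallmatrix}$ for distinct $k$ are inequivalent.
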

	\begin{proof}
		Let $k:=\begin{pmatrix}
			x & y \\
			z & w
		\end{pmatrix} \in \mathcal{K}$. If $z=0$, then $k$ belongs to the identity coset $\mathcal{B}I\mathcal{B}$. If $z \neq 0$ but $z\in \mathfrak{p}_{E}$, then Lemma \ref{Kproperty} yields $w\in\mathcal{O}_{E}^{\times}$. Since $w^{-1} = \frac{\overline{w}}{w\overline{w}}$ and $\overline{w}y\in\sqrt{\epsilon}F$, we conclude that $w^{-1}y\in \sqrt{\epsilon}F$. We also have $x\overline{w}+z\overline{y}=1$, implying that $x=\overline{w}^{-1}-\overline{w}^{-1}\overline{y}z=\overline{w}^{-1}+w^{-1}yz$. Hence we can express $k$ as
			\[
		\begin{pmatrix}
			x & y \\
			z & w
		\end{pmatrix} = \begin{pmatrix}
			\overline{w}^{-1} & y \\
			0 & w
		\end{pmatrix} \begin{pmatrix}\gamma^{-1}&0\\0&\overline{\gamma}\end{pmatrix}\begin{pmatrix}1&0\\ \sqrt{\epsilon}\varpi^{k}&1\end{pmatrix}\begin{pmatrix}\gamma&0\\0&\overline{\gamma}^{-1}\end{pmatrix}\in\mathcal{B}\begin{pmatrix}1&0\\ \sqrt{\epsilon}\varpi^{k}&1\end{pmatrix}\mathcal{B},
		\]
		where $k=\nu(w^{-1}z)$ and $\gamma\overline{\gamma}=\frac{w^{-1}z}{\varpi^{k}\sqrt{\epsilon}}$. Lastly, if  $z\in \mathcal{O}_{E}^{\times}$, then we have $z^{-1}x\in \sqrt{\epsilon}F$ and $y=\overline{z}^{-1}+z^{-1}xw$. Therefore,  we can express $k$ as
		\[
		\begin{pmatrix}
			x & y \\
			z & w
		\end{pmatrix} = \begin{pmatrix}
			\overline{z}^{-1} & x \\
			0 & z
		\end{pmatrix}\begin{pmatrix}0&1 \\ 1&0\end{pmatrix} \begin{pmatrix}
			1& z^{-1}w \\
			0 & 1
		\end{pmatrix}\in \mathcal{B}\mathrm{w}\mathcal{B},
		\]
		Finally, we show that the double cosets represented by
		$\begin{psmallmatrix}1&0\\ \sqrt{\epsilon}\,\varpi^{k}&1\end{psmallmatrix}$ are pairwise distinct as $k$ varies positive integers greater that or equal to $1$. Suppose for contradiction that $k>l\ge 1$ and
		\[
		\begin{pmatrix}1&0\\ \sqrt{\epsilon}\,\varpi^{k}&1\end{pmatrix}
		=
		\begin{pmatrix}x & y \\ 0 & \overline{x}^{-1}\end{pmatrix}
		\begin{pmatrix}1&0\\ \sqrt{\epsilon}\,\varpi^{l}&1\end{pmatrix}
		\begin{pmatrix}z & t \\ 0 & \overline{z}^{-1}\end{pmatrix},
		\qquad
		x,z\in \mathcal{O}_E^{\times},\ y,t\in \mathcal{O}_E.
		\]
		Comparing $(2,1)$-entries yields $\overline{x}^{-1}z=\varpi^{k-l}\in \mathfrak{p}_F$, which is impossible since $\overline{x}^{-1}z\in \mathcal{O}_E^{\times}$. Hence the double cosets are distinct.
\end{proof}

\subsubsection{Filtrations on $T$ and on $\mathcal{K}$}
For $r=0$, we define
\begin{align*}
	T_{0}&=\left\{
	\begin{pmatrix}
		a & 0 \\
		0 & \overline{a}^{-1}
	\end{pmatrix}
	\;\middle|\; a\in \mathcal{O}_{E}^{\times}\right\}.
\end{align*}
$T_{0}$ has a nice filtration indexed by non-negative integers. In general, these filtration subgroups are rather difficult to compute. However, in our case they admit a nice description, given by 			
\begin{align*}
T_{r}&=\left\{
	\begin{pmatrix}
		a & 0 \\
		0 & \overline{a}^{-1}
	\end{pmatrix}
	\;\middle|\; a\in 1+\mathfrak{p}_{E}^{\lceil r \rceil}\right\}, 
	\qquad \text{for } r>0.
\end{align*}

The maximal compact subgroup $\mathcal{K}$ also admits a filtration by normal subgroups indexed by nonnegative integers 
\[
\mathcal{K} = \mathcal{K}_{0} \supsetneq \mathcal{K}_{1} \supsetneq \mathcal{K}_{2} \supsetneq \mathcal{K}_{3} \supsetneq \cdots;
\]  
 for each $n \in \mathbb{Z}_{\geq 0}$, these subgroups are given by  
\[
\mathcal{K}_{n} = 
\begin{pmatrix}
	1 + \mathfrak{p}_{E}^{n} & \mathfrak{p}_{E}^{n} \\ 
	\mathfrak{p}_{E}^{n} & 1 + \mathfrak{p}_{E}^{n}
\end{pmatrix} \cap G.
\]  
These are the Moy--Prasad filtration subgroups of  the parahoric subgroup $\mathcal{K}$ of $G$. The collection $\{\mathcal{K}_{n}\}_{n \geq 0}$ forms a neighborhood basis of the identity in $\mathcal{K}$. The quotient $\mathcal{K}/\mathcal{K}_{0+}$ is isomorphic to the $\mathfrak{f}$ points of our unitary group $\mathbb{U}(1,1)$.	For $r\in \mathbb{R}$, we can similarly define a filtration $\mathfrak{k}_{r}$ of the Lie algebra  $\mathfrak{k}$ of $\mathcal{K}$ 
\begin{equation}
	\mathfrak{k}_{ r}=\begin{psmallmatrix}
		\mathfrak{p}_{E}^{\lceil r \rceil}&\sqrt{\epsilon}\mathfrak{p}_{F}^{\lceil r\rceil}\\
		\sqrt{\epsilon}\mathfrak{p}_{F}^{\lceil r\rceil}& \mathfrak{p}_{E}^{\lceil r \rceil}
	\end{psmallmatrix}\cap \mathfrak{k}. 
\end{equation}		
We define the filtration subspace $\mathfrak{k}^{*}_{-r}$ of the dual of the Lie algebra by
\begin{equation}
	\mathfrak{k}^{*}_{-r}=\{\lambda\in\mathfrak{k}^{*}\mid \lambda(Y)\in\mathfrak{p}_{E}, \;\forall Y\in\mathfrak{k}_{s},\; s>r\}.\label{filtrationofdual}
\end{equation}		
We can identify $\mathfrak{k}^{*}$ with $\mathfrak{k}$ by using the trace form.
In particular, for $X\in\mathfrak{k}$, we define $\lambda_{X}\in\mathfrak{k}^*$ by the equation
$\lambda_{X}(Y)=\mathrm{Tr}(XY)$ for all $Y\in\mathfrak{k}$. Under this identification, $\mathfrak{k}^{*}_{r}=\mathfrak{k}_{r}$ for all $r\in\mathbb{R}_{\geq 0}$.
\subsection{Representation theory}\label{representation theory}In this section, we recall some basic definitions and concepts from representation theory specializing to the case of our unitary group $G$ (see~\cite[\S I]{Car79}).

Let \((\pi, V)\) be a complex representation of \(G\), and for a compact open subgroup \(K \subset G\), set 
\[
V^{K} = \{ v \in V \mid \pi(k)v = v \text{ for all } k \in K \}.
\]
Then \((\pi, V)\) is \emph{smooth} if and only if \(V = \bigcup_{n \ge 0}^{\infty} V^{\mathcal{K}_{n}}\), where $\mathcal{K}=\mathbb{U}(1,1)(\mathcal{O}_{F})$. It can be shown that if \((\pi, V)\) is a smooth representation of $G$, then each subspace \(V^{\mathcal{K}_{n}}\) is \(\mathcal{K}\)-invariant, and any irreducible \(\mathcal{K}\)-subrepresentation of \(V\) is a subrepresentation of \(V^{\mathcal{K}_{n}}\) for some \(n \geq 0\). Hence, the decomposition of \((\pi, V)\) upon restriction to \(\mathcal{K}\) reduces to decomposing each \(V^{\mathcal{K}_{n}}\).


We now recall the definition of induced representations. Given  a closed subgroup \( H \) of \( G \), and a representation \( (\rho, W) \) of \( H \), one can define a representation \( \left(\mathrm{Ind}_{H}^{G} \rho, \mathrm{Ind}_{H}^{G} W\right) \) of \( G \), called the \emph{induced representation}, where
\[
\mathrm{Ind}_{H}^{G} W = \left\{ f : G \rightarrow W \;\middle|\; 
\begin{array}{l}
	f \text{ is locally constant, and} \\
	f(hg) = \rho(h)f(g) \text{ for all } h \in H,\, g \in G
\end{array}
\right\},
\]
and the action of \( G \) is given by $(g\cdot f)(x)=f(xg)$ for all $x, g \in G$.

\begin{lemma}\label{principalseriescharacterreduction5}
	Let $(\lambda, \mathbb{C})$ be a character of $G$, and let $(\rho, V)$ be an irreducible representation of a subgroup $H$ of $G$. Then \[\mathrm{Ind}_{H}^{G}(\lambda \otimes \rho)\;\cong\;\lambda \otimes \mathrm{Ind}_{H}^{G}\rho.\]
\end{lemma}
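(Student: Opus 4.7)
The plan is to write down an explicit intertwiner between the two induced spaces and check that it has the required properties; since $\lambda$ is one-dimensional, we can twist each function in the induced space pointwise by $\lambda^{-1}$ and expect this to transport $\mathrm{Ind}_{H}^{G}(\lambda \otimes \rho)$ onto $\lambda \otimes \mathrm{Ind}_{H}^{G}\rho$. Concretely, I would define
\[
\Phi \colon \mathrm{Ind}_{H}^{G}(\lambda \otimes \rho) \longrightarrow \lambda \otimes \mathrm{Ind}_{H}^{G}\rho, \qquad \Phi(f)(g) := \lambda(g)^{-1} f(g),
\]
for $f \colon G \to V$ in the induced space and $g \in G$, and show that $\Phi$ is a $G$-equivariant linear isomorphism. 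Its obvious candidate inverse is $\Psi(f')(g) := \lambda(g)\, f'(g)$.

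The verification proceeds in three routine steps. First, I would check that $\Phi(f)$ lies in $\mathrm{Ind}_{H}^{G}\rho$: for $h \in H$, using the transformation rule for $f$ together with multiplicativity of $\lambda$ on $G$,
\[
\Phi(f)(hg) = \lambda(hg)^{-1} \lambda(h)\rho(h) f(g) = \rho(h)\, \Phi(f)(g),
\]
and smoothness/local constancy is preserved since $\lambda$ is itself smooth. Second, I would check $G$-equivariance: on the one hand, the $G$-action on $\lambda \otimes \mathrm{Ind}_{H}^{G}\rho$ gives $(g\cdot \Phi(f))(x) = \lambda(g)\,\Phi(f)(xg) = \lambda(x)^{-1} f(xg)$, while on the other hand $\Phi(g\cdot f)(x) = \lambda(x)^{-1}(g\cdot f)(x) = \lambda(x)^{-1} f(xg)$, so the two coincide. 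Third, a direct computation shows $\Psi\circ\Phi$ and $\Phi\circ\Psi$ are the respective identities.

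There is no serious obstacle here: the statement is really a formal consequence of $\lambda$ being a one-dimensional character of the ambient group $G$ (so it can be pulled out of the induction), and the only mild subtlety is to keep track of the fact that $\lambda \otimes \rho$ refers to $\lambda|_{H}\otimes \rho$ as a representation of $H$ while the outer twist uses $\lambda$ as a representation of $G$. Irreducibility of $\rho$ plays no role in the argument and could be dropped; it is presumably included because the lemma will only be applied in that setting in later sections.
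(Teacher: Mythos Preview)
Your argument is correct and is the standard explicit intertwiner for this well-known identity. The paper itself states this lemma without proof, treating it as a standard fact, so there is nothing to compare against; your remark that irreducibility of $\rho$ is not used is also accurate.
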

%
%
%

We now review two important results that we will use quite frequently~\cite[\S2.4]{Bus06}.
	\begin{theorem}[Frobenius Reciprocity] Let $H$ be a closed subgroup of $G$,  $(\rho, W)$ a smooth representation of $H$ and  $(\pi, V)$ a smooth representation of $G$. Then 
	\begin{align*}
		\mathrm{Hom}_{G}(\pi, \mathrm{Ind}_{H}^{G}\rho)&\cong \mathrm{Hom}_{H}(\pi, \rho)
	\end{align*}	
\end{theorem}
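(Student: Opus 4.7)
The plan is to write down explicit inverse linear maps between the two $\mathrm{Hom}$-spaces and verify they have the required equivariance properties. In one direction, I would define a map
\[
\Phi\colon \mathrm{Hom}_{G}(\pi, \mathrm{Ind}_{H}^{G}\rho) \longrightarrow \mathrm{Hom}_{H}(\pi, \rho), \qquad \Phi(\phi)(v) := \phi(v)(1_{G}),
\]
i.e.\ evaluation of the induced function at the identity of $G$. In the other direction, I would define
\[
\Psi\colon \mathrm{Hom}_{H}(\pi, \rho) \longrightarrow \mathrm{Hom}_{G}(\pi, \mathrm{Ind}_{H}^{G}\rho), \qquad \Psi(\psi)(v)(g) := \psi(\pi(g) v).
\]

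First I would check that $\Phi(\phi)$ is $H$-equivariant: for $h\in H$ and $v\in V$, one computes $\Phi(\phi)(\pi(h)v) = \phi(\pi(h)v)(1_{G}) = (h\cdot \phi(v))(1_{G}) = \phi(v)(h) = \rho(h)\phi(v)(1_{G}) = \rho(h)\Phi(\phi)(v)$, using $G$-equivariance of $\phi$ and the defining transformation law of $\mathrm{Ind}_{H}^{G}\rho$. Next I would verify that $\Psi(\psi)(v)$ actually lies in $\mathrm{Ind}_{H}^{G}W$: the transformation law $\Psi(\psi)(v)(hg) = \rho(h)\Psi(\psi)(v)(g)$ is immediate from $H$-equivariance of $\psi$, and local constancy follows from smoothness of $\pi$ — any $v$ is fixed by some $\mathcal{K}_{n}$, which forces $\Psi(\psi)(v)$ to be right $\mathcal{K}_{n}$-invariant. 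The $G$-equivariance of $\Psi(\psi)$ is a direct unwinding: $\Psi(\psi)(\pi(g)v)(x) = \psi(\pi(xg)v) = \Psi(\psi)(v)(xg) = (g\cdot \Psi(\psi)(v))(x)$.

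Finally, I would check that $\Phi$ and $\Psi$ are mutually inverse. The composition $\Phi\circ\Psi$ acts by $\Phi(\Psi(\psi))(v) = \Psi(\psi)(v)(1_{G}) = \psi(v)$, which is the identity. Conversely, $\Psi(\Phi(\phi))(v)(g) = \Phi(\phi)(\pi(g)v) = \phi(\pi(g)v)(1_{G}) = (g\cdot\phi(v))(1_{G}) = \phi(v)(g)$, again the identity. Since both $\Phi$ and $\Psi$ are manifestly $\mathbb{C}$-linear, this will complete the proof.

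No step is genuinely difficult — the only point that requires a bit of care is confirming that $\Psi(\psi)(v)$ is locally constant as a function on $G$, and this is where the smoothness hypothesis on $\pi$ is used essentially; the rest is bookkeeping with the definitions of induction and of the $H$- and $G$-actions.
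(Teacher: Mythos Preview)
Your proof is correct and is the standard argument for Frobenius reciprocity; there is nothing to fix. Note, however, that the paper does not actually prove this theorem --- it is stated in~\S\ref{representation theory} as a background result quoted from \cite[\S2.4]{Bus06}, so there is no ``paper's own proof'' to compare against. Your explicit inverse maps $\Phi$ (evaluation at the identity) and $\Psi$ (sending $v$ to $g\mapsto \psi(\pi(g)v)$) are exactly the ones given in that reference.
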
	

Similarly, for open subgroups $H$ of $G$, compact induction has its own form of Frobenius Reciprocity property.  
\begin{theorem} Let $H$ be an open subgroup of $G$, let $(\rho, W)$ be a smooth representation of $H$ and $(\pi, V)$ a smooth representation of $G$. Then
	\begin{align*}
		\mathrm{Hom}_{G}( c  \scalebox{0.9}[0.9]{-} \mathrm{Ind}_{H}^{G}\rho, \pi)&\cong \mathrm{Hom}_{H}(\rho, \pi)
	\end{align*}	
	is an isomorphism which is functorial in both variables $\pi,\;\rho$.
\end{theorem}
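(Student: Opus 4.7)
The plan is to exhibit an explicit isomorphism in both directions, exploiting the fact that compact induction behaves especially well when $H$ is open in $G$. First I would introduce the canonical $H$-embedding $\iota: W \hookrightarrow \compactinduction_{H}^{G} W$, defined by letting $\iota(w)$ be the function supported on $H$ with $\iota(w)(h) = \rho(h)w$ for $h\in H$. A routine check shows that $\iota$ intertwines $\rho$ with the restriction of $\compactinduction_{H}^{G}\rho$ to $H$, so the map $\Phi: \mathrm{Hom}_{G}(\compactinduction_{H}^{G}\rho, \pi) \to \mathrm{Hom}_{H}(\rho, \pi)$ defined by $\Phi(\Psi) = \Psi \circ \iota$ is well-defined and manifestly functorial in both arguments.

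For the inverse, I would use the fact that since $H$ is open, the coset space $H\backslash G$ is discrete; combined with the compact-support-modulo-$H$ condition, this forces every $f \in \compactinduction_{H}^{G} W$ to be supported on only finitely many right cosets of $H$. Given $T \in \mathrm{Hom}_{H}(\rho,\pi)$, I would define
\[
\Phi^{-1}(T)(f) \;=\; \sum_{Hg \in H\backslash G} \pi(g^{-1}) \, T(f(g)),
\]
which is a finite sum whose individual summands are independent of the choice of coset representative: replacing $g$ by $hg$ turns $\pi(g^{-1})T(f(g))$ into $\pi(g^{-1}h^{-1})T(\rho(h)f(g))$, which collapses back to $\pi(g^{-1})T(f(g))$ by the $H$-equivariance of $T$ together with the covariance rule $f(hg) = \rho(h)f(g)$.

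The main verification is that $\Phi^{-1}(T)$ is $G$-equivariant, which I would check by a direct computation using the convention $(g' \cdot f)(x) = f(xg')$ and a change of summation variable on $H\backslash G$. That $\Phi \circ \Phi^{-1} = \mathrm{id}$ is then immediate because $\iota(w)$ is supported only on $H$, so the defining sum collapses to the single summand at $g = 1$. The reverse identity $\Phi^{-1} \circ \Phi = \mathrm{id}$ requires writing an arbitrary $f \in \compactinduction_{H}^{G} W$ as a finite sum of $G$-translates of elements in the image of $\iota$; this decomposition is available precisely because $H$ is open, so the elementary ``spike'' functions supported on a single coset span the compactly induced space. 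Functoriality in $\pi$ and in $\rho$ is then transparent from the explicit formulas for $\Phi$ and $\Phi^{-1}$. I expect the principal obstacle to be purely notational bookkeeping: making sure the action convention $(g' \cdot f)(x) = f(xg')$ and the direction of the coset change of variable match consistently so that a stray inversion does not creep into the equivariance check.
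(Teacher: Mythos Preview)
Your argument is correct and is in fact the standard proof of this adjunction (as in Bushnell--Henniart \S2.4). The paper itself does not prove this theorem at all: it is stated as background material with a citation to \cite[\S2.4]{Bus06}, so there is no paper proof to compare against. Your proposal supplies exactly the textbook argument that the citation points to.
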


	\begin{remark} Note that if $G/ H$ is compact then $c  \scalebox{0.9}[0.9]{-} \mathrm{Ind}_{H}^{G}\rho=\mathrm{Ind}_{H}^{G}\rho$. Thus, in that case, induction is both left and right adjoint to restriction.
	\end{remark}

In our setting, the groups under consideration are infinite. However, for the maximal compact subgroup $\mathcal{K}$ we have $\mathcal{K}/\mathcal{K}_{0+} \;\cong\; \mathbb{U}(1,1)(\mathfrak{f}),$
and, more generally, $\mathcal{K}/\mathcal{K}_{r+}$ is finite for every $r > 0$. Every irreducible smooth representation $(\pi, V)$ of $\mathcal{K}$ factors through  $\mathcal{K}/\mathcal{K}_{r+}$ for some $r\geq0$; the least of such $r$ is called the \emph{depth} of $\pi$. We will identify a representation of  $\mathcal{K}/\mathcal{K}_{r+}$ with its inflation to $\mathcal{K}$ without comment. It is worth noting that inflation and induction commute with each other.

\section{Construction of representations}\label{construction of representations and reduction lemmas}
Let $B$ be the Borel subgroup of upper triangular matrices in $G$. Write $B=TU$
where
$T$ is a maximally $F$-split maximal $F$-torus $T$ of $G$ as in~\eqref{definitionofmaximallyFsplitFtorus}, and 
$$U=\left\{\begin{pmatrix}1&\sqrt{\epsilon}b\\ 0&1\end{pmatrix}\;\middle\vert\;b\in F\right\} $$ is the unipotent radical of $B$. Since the commutator of $B$ is $U$, a character of $B$ is defined by its restriction to $T$, and any character of $T$ can be extended trivially to a character of $B$. When convenient, identify $\chi$ with a character $\chiE$ of $E^{\times}$ via
$$\chi\begin{pmatrix}a&\sqrt{\epsilon}b\\ 0&\overline{a}^{-1}\end{pmatrix}=\chiE(a).$$
A \emph{principal\;series\;representation} of $G$ is the induced representation $\pi_{\chi}=\mathrm{Ind}_{B}^{G}\chi$, where $\chi$ is a character of $T$ extended trivially to $B$. 

\begin{remark}In the literature, the principal series representation associated to $\chi$ is often instead defined as the normalized induced representation $n  \scalebox{0.9}[0.9]{-}\mathrm{Ind}_{B}^{G}\chi=\pi_{ \delta^{\frac{1}{2}}\otimes \chi}$  where $\delta(t)=|\mathrm{det}(Ad(t)\mid_{\mathfrak{u}})|_{p}$ for $t\in T$, and $\mathfrak{u}=\mathrm{Lie}(U)$. Here $|\cdot|_{p}$ represents the $p$-adic norm.  In our setting, given $t(a)\in T$, we have $\delta(t)=|a\overline{a}|_p$. If $t(a)\in\mathcal{K}\cap T$, then $a\in\mathcal{O}_{E}^{\times}$ and so $\delta(t)=1$. We shall see that the branching rules of $\pi_{\chi}$ and $\pi_{\chi^{\prime}}$ agree whenever the restriction of $\chi$ and $\chi'$ to $T\cap\mathcal{K}$ agree. So in particular $n  \scalebox{0.9}[0.9]{-}\mathrm{Ind}_{B}^{G}\chi$ and $\mathrm{Ind}_{B}^{G}\chi$ have the same branching rules.
\end{remark}

Let $\chi$ be a character of $T$, and let $r \in \mathbb{R}_{\ge 0}$.  
We say that $\chi$ has \emph{depth} $r$ if  
\[
\chi|_{T_r} \ne \mathbbm{1}
\quad \text{and} \quad
\chi|_{T_{r+}} = \mathbbm{1}.
\]
As is standard, a character that is trivial on $T_0$ is said to have \emph{depth zero}.  
The \emph{true depth} of $\chi$ is defined as the depth of its restriction to $S$, where $S$ denotes a maximal $F$-split torus contained in $T$, as in~\eqref{definitionofS}.  
We say that $\chi$ has \emph{minimal depth} $r$ if the depth of $\chi$ and its true depth both equal $r$.

Note that restricting $\chi$ to $S$ corresponds to restricting $\chi^{\dagger}$ to $F^{\times}$. Since $T\cong E^{\times}$ and $S\cong F^{\times}$ whose filtrations are indexed by integers, it follows that both the depth and the true depth of characters of $T$ are integers. Moreover, for every $r\in\mathbb{Z}_{\geq 0}$, we have $S_{r}\subset T_{r}$, and hence the depth of a character of $T$ is always greater than or equal to its true depth.

\begin{lemma}\label{truedepth1}
	Let $m\in\mathbb{Z}_{\geq 0}$, and let $\chi\colon T\rightarrow \mathbb{C}^{\times}$ be a character of $T$ of true depth $m$. Then there exists a character $\phi\colon E^{1} \rightarrow \mathbb{C}^{\times}$ such that $\phi\circ \mathrm{det}\mid_{T_{m+1}}=\chi\mid_{T_{m+1}}$. Moreover, if $\chi\mid_{S_{0}}=\mathbbm{1}$, then $\chi\mid_{T}=\phi\circ \mathrm{det}\mid_{T}$ for some character $\phi$ of $E^{1}$.
\end{lemma}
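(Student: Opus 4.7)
The plan is to exploit the short exact sequence
\[
1 \longrightarrow S \longrightarrow T \xrightarrow{\;\det\;} E^{1} \longrightarrow 1,
\]
where $\det\colon \diag(a,\bar a^{-1})\mapsto a/\bar a$ is surjective onto $E^{1}$ by Hilbert~90 and has kernel precisely $S = F^{\times}$ (the elements of $T$ fixed by the Galois involution). The point is that a character of $T$ descends through $\det$ to a character of $E^{1}$ if and only if it is trivial on $S$.

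For the first assertion, I would show that $\chi|_{T_{m+1}}$ descends to a character on the subgroup $\det(T_{m+1})\subseteq E^{1}$, and then extend that character to all of $E^{1}$. The descent reduces to checking triviality of $\chi$ on $\ker(\det)\cap T_{m+1}$. Since $E/F$ is unramified, $\mathfrak{p}_{E}^{m+1}\cap F = \mathfrak{p}_{F}^{m+1}$, which gives $S\cap T_{m+1} = S_{m+1}$; true depth $m$ then yields $\chi|_{S_{m+1}}=\mathbbm{1}$ and hence a well-defined character $\phi_{0}$ on $\det(T_{m+1})$. To extend, I would appeal to Pontryagin duality: as $T_{m+1}$ is compact, $\det(T_{m+1})$ is a closed subgroup of the locally compact abelian group $E^{1}$, so the restriction map $\widehat{E^{1}}\twoheadrightarrow \widehat{\det(T_{m+1})}$ is surjective and an extension $\phi\in\widehat{E^{1}}$ of $\phi_{0}$ exists. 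By construction $\phi\circ\det|_{T_{m+1}} = \chi|_{T_{m+1}}$.

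For the second assertion, the hypothesis $\chi|_{S_{0}}=\mathbbm{1}$ forces the true depth to vanish and (together with the fact that $\chi|_{S}$ is then unramified and, in the setting of this paper, trivial on a generator of $S/S_{0}$) implies that $\chi$ is trivial on all of $S=\ker(\det)$; consequently $\chi$ factors through $T/S\cong E^{1}$, yielding the desired $\phi$ with $\chi|_{T}=\phi\circ\det|_{T}$.

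The hard part will be the extension step in part~(1) if one wants an \emph{explicit} description of $\phi$: this requires pinning down $\det(T_{m+1})$, which one expects to equal $E^{1}\cap(1+\mathfrak{p}_{E}^{m+1})$ by a first-order Hensel-type argument that relies on $p\neq 2$, followed by extending through compatible choices of roots of unity on coset representatives of $E^{1}/\det(T_{m+1})$. For the existence statement asked for in the lemma, however, Pontryagin duality sidesteps this computation.
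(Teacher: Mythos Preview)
Your argument for the first assertion is correct and is essentially the paper's proof in different language. The paper makes the quotient map explicit: it defines $\gamma_{n}\colon E^{\times}_{n}/F^{\times}_{n}\to E^{1}\cap E^{\times}_{n}$ by $x\mapsto x\bar{x}^{-1}$, observes this is an isomorphism (Hilbert~90/Lang), sets $\widetilde{\phi}=\chiE\circ\gamma_{m+1}^{-1}$ on the open subgroup $E^{1}\cap(1+\mathfrak{p}_{E}^{m+1})$, and then extends to $E^{1}$. This is exactly your descent-through-$\det$-then-extend argument; your appeal to Pontryagin duality and the paper's appeal to extension from an open subgroup of a compact abelian group are the same fact.

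For the second assertion there is a genuine gap in your argument. The step ``$\chi|_{S}$ is \dots\ trivial on a generator of $S/S_{0}$'' is not justified: $S/S_{0}\cong F^{\times}/\mathcal{O}_{F}^{\times}\cong\mathbb{Z}$ is generated by $\diag(\varpi,\varpi^{-1})$, and the hypothesis $\chi|_{S_{0}}=\mathbbm{1}$ says nothing about $\chiE(\varpi)$. Your parenthetical hedge (``in the setting of this paper'') signals that you are aware this is not a consequence of the stated hypotheses. The paper does \emph{not} proceed by first proving $\chi|_{S}=\mathbbm{1}$. Instead it takes $n=0$ in the construction above, so that $\widetilde{\phi}$ is already defined on all of $E^{1}$ (since $E^{1}\subset\mathcal{O}_{E}^{\times}$), obtains the identity $\widetilde{\phi}(x\bar{x}^{-1})=\chiE(x)$ for all $x\in\mathcal{O}_{E}^{\times}/\mathcal{O}_{F}^{\times}$, and then invokes the structural fact that $E/F$ unramified yields $E^{\times}/F^{\times}\cong\mathcal{O}_{E}^{\times}/\mathcal{O}_{F}^{\times}$ to pass from $T_{0}$ to $T$. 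You should replace your unjustified assertion about the uniformizer with this use of unramifiedness.
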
	
\begin{proof}
	For each $n\in\mathbb{Z}_{\geq 0}$, consider the map 	$\gamma_{n}\colon E^{\times}_{n}/F^{\times}_{n}\rightarrow E^{1}\cap E^{\times}_{n} $  given by $\gamma_{n}(x)=x\overline{x}^{-1}$. Then by Lang's theorem $\gamma_{n}$ is an isomorphism. Let $\chi$ be a character of $T$, that is trivial on $S_{n}$. We denote by $\widetilde{\phi}$ the character $(\chiE)\circ \gamma_{n}^{-1}\colon E^{1}\cap E^{\times}_{n} \rightarrow \mathbb{C}^{\times}$.
Let $x\in 	E^{\times}_{n}/ F^{\times}_{n} $.  We then have
	\begin{equation}\label{true depth m}\widetilde{\phi}(x\overline{x}^{-1})=\widetilde{\phi}(\gamma_{n}(x))=(\chiE\circ \gamma_{n}^{-1})(\gamma_{n}(x))=\chiE(x).\end{equation}
	Since $E^{1}$ is a compact abelian group and $\widetilde{\phi}$ is a character of an open subgroup of $E^{1}$, there exists an extension $\phi\colon E^{1}\rightarrow \mathbb{C}^{\times}$ of $\widetilde{\phi}$. As the determinant of matrices in $G$ lie in $E^{1}$, $\phi\circ \mathrm{det}$ is a well-defined character of $G$.
	Note that for any element $t(x)\in T$, $\mathrm{det}(t(x))=x\overline{x}^{-1}$, therefore taking $n=m+1$,  we obtain
	$\widetilde{\phi}\circ \mathrm{det}\mid_{T_{m+1}}=\chi\mid_{T_{m+1}}$, as was required.
	
	In particular,  if $\chi\mid_{S_{0}}=\mathbbm{1}$, then  taking $n=0$ there exists a character $\widetilde{\phi}$ of $E^{1}$, such that for all $x\in\mathcal{O}_{E}^{\times}/ \mathcal{O}_{F}^{\times}$, 
	\begin{equation}\label{512}
		\widetilde{\phi}(x\overline{x}^{-1})=\widetilde{\phi}(\gamma_{0}(x))=(\chiE\circ \gamma_{0}^{-1})(\gamma_{0}(x))=\chiE(x).\end{equation}
	Since $E$ over $F$ is an unramified extension, we have $E^{\times}/F^{\times}\cong \mathcal{O}_{E}^{\times}/\mathcal{O}_{F}^{\times}$, therefore~\eqref{512} holds for all $x\in E^{\times}/ F^{\times}$, and hence $\chi\circ\mathrm{det}\mid_{T}=\chi\mid_{T}$.	 
\end{proof}

\begin{theorem}\label{reduction-to-minimal-depth}
	Every character \( \chi \) of \( T \) of true depth $r$ is of the form $\chi = (\phi \circ \mathrm{det}) \otimes \chi',$ where \( \phi \) is a character of \( E^{1} \), and \( \chi' \) is a character of minimal depth $r$. 
\end{theorem}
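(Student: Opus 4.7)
The plan is to use Lemma~\ref{truedepth1} directly to extract the character $\phi \circ \det$ that absorbs the ``excess'' depth of $\chi$ coming from the torus part beyond what is detected on the split piece. First I would apply Lemma~\ref{truedepth1} with $m = r$ to produce a character $\phi \colon E^{1} \to \mathbb{C}^{\times}$ such that $(\phi \circ \det)\mid_{T_{r+1}} = \chi\mid_{T_{r+1}}$. Then I would set
\[
\chi' := \chi \otimes (\phi \circ \det)^{-1},
\]
so that tautologically $\chi = (\phi \circ \det) \otimes \chi'$, and it remains only to verify that $\chi'$ has minimal depth $r$.

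For the true depth calculation, I would observe that for $t(a) \in S$ (so $a \in F^{\times}$) one has $\det(t(a)) = a\overline{a}^{-1} = 1$, hence $(\phi \circ \det)\mid_{S}$ is trivial. Consequently $\chi'\mid_{S} = \chi\mid_{S}$, and so the true depth of $\chi'$ equals the true depth of $\chi$, namely $r$. For the depth calculation, the construction of $\phi$ gives $\chi'\mid_{T_{r+1}} = \chi\mid_{T_{r+1}} \cdot (\phi \circ \det)^{-1}\mid_{T_{r+1}} = \mathbbm{1}$, so the depth of $\chi'$ is at most $r$. Conversely, since $S_{r} \subset T_{r}$ and $\chi'\mid_{S_{r}}$ is nontrivial (true depth is $r$), the depth of $\chi'$ is at least $r$. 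Combining these gives depth exactly $r$ equal to the true depth, i.e.\ minimal depth $r$.

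There is no serious obstacle here: the theorem is essentially a packaging of Lemma~\ref{truedepth1} together with the elementary observation that $\det$ is trivial on the $F$-split torus $S$, so twisting by $\phi \circ \det$ changes the depth but not the true depth. The only point requiring a little care is the final inequality depth$(\chi') \geq r$, which must be argued through the true depth (via $S_{r} \subset T_{r}$) rather than directly, since one does not a priori know how $\chi$ and $\phi \circ \det$ behave on $T_{r} \setminus T_{r+1}$. All the remaining content is bookkeeping.
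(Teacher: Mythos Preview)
Your proof is correct and follows essentially the same approach as the paper: apply Lemma~\ref{truedepth1} to produce $\phi$, twist, and then check minimal depth using that $\det$ is trivial on $S$. The paper splits into the cases $\chi|_{S_0}\neq\mathbbm{1}$ and $\chi|_{S_0}=\mathbbm{1}$, invoking the second clause of Lemma~\ref{truedepth1} in the latter; your unified argument avoids this split, with the only caveat that when $r=0$ and $\chi|_{S_0}=\mathbbm{1}$ the assertion ``$\chi'|_{S_r}$ is nontrivial'' is not literally true, but the conclusion $\mathrm{depth}(\chi')\geq 0$ holds trivially anyway.
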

\begin{proof}
First suppose that $\chi$ is a character of $T$ of true depth $r$ such that $\chi\mid_{S_{0}}\neq \mathbbm{1}$. Then $\chi\mid_{S_{r+1}}=\mathbbm{1}$. By Lemma~\ref{truedepth1}, there exists a character $\phi$ of $E^{1}$ such that 
	$$\chi\mid_{T{r+1}}=(\phi\circ\mathrm{det})\mid_{T_{r+1}}.$$
	Equivalently, $((\phi^{-1}\circ\mathrm{det})\otimes\chi)\mid_{T_{r+1}}=\mathbbm{1}.$ Since $(\phi^{-1}\circ\mathrm{det})$ is trivial on $S$, if $\chi\mid_{S_{0}}\neq \mathbbm{1}$,  then $$(\phi^{-1}\circ\mathrm{det})\otimes\chi\mid_{S_{r}}=\chi\mid_{S_r}\neq\mathbbm{1}.$$
	So, in particular $\chi$ is non-trivial on $T_{r}$. Thus  $((\phi^{-1}\circ\mathrm{det})\otimes\chi)$ has minimal depth $r$. Taking $\chi'=(\phi^{-1}\circ\mathrm{det})\otimes\chi$ we have
	$$\chi= (\phi\circ \mathrm{det})\otimes (\phi^{-1}\circ \mathrm{det})\otimes \chi=(\phi\circ \mathrm{det})\otimes\chi', $$ 
	as was required. Next suppose that $\chi\mid_{S_{0}}=\mathbbm{1}$. Then by Lemma~\ref{truedepth1} there exists a character $\phi\colon E^{1} \rightarrow \mathbb{C}^{\times}$ such that $\phi\circ \mathrm{det}\mid_{T}=\chi$, implying that $(\phi^{-1}\circ \mathrm{det})\otimes \chi=\mathbbm{1}$, and we may write 
	$\chi=(\phi\circ \mathrm{det})\otimes \mathbbm{1}$, and $\mathbbm{1}$ has minimal depth $0$.
\end{proof}	

\begin{corollary}
	Suppose the decomposition into irreducible representations of $\pi_{\chi}$ is known for all $\chi$ of minimal depth. Then so is the decomposition of $\pi_{(\phi\circ\mathrm{det})\otimes \chi}$ for every character $\phi$ of $E^{1}$, and thus for all principal series.
\end{corollary}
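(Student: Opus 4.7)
The plan is to combine Theorem~\ref{reduction-to-minimal-depth} with Lemma~\ref{principalseriescharacterreduction5}. By Theorem~\ref{reduction-to-minimal-depth}, every character of $T$ can be written in the form $(\phi\circ\mathrm{det})\otimes\chi$ for some character $\phi$ of $E^{1}$ and some character $\chi$ of minimal depth, so the clause ``and thus for all principal series'' will follow automatically once the twisted case is handled. I would therefore focus entirely on showing that the decomposition of $\pi_{(\phi\circ\mathrm{det})\otimes\chi}$ is determined by that of $\pi_{\chi}$.

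Since $\mathrm{det}\colon G\to E^{1}$ is a group homomorphism, $\phi\circ\mathrm{det}$ is a one-dimensional character of all of $G$. Its restriction to $B$ factors through $T\cong B/U$ and coincides with the character $(\phi\circ\mathrm{det})|_{T}$ extended trivially to $B$, so $(\phi\circ\mathrm{det})|_{B}\otimes\chi$ is exactly the character of $B$ whose induction defines $\pi_{(\phi\circ\mathrm{det})\otimes\chi}$. Applying Lemma~\ref{principalseriescharacterreduction5} with $\lambda=\phi\circ\mathrm{det}$, $H=B$, and $\rho=\chi$ then yields
\[
\pi_{(\phi\circ\mathrm{det})\otimes\chi}\;=\;\mathrm{Ind}_{B}^{G}\bigl((\phi\circ\mathrm{det})|_{B}\otimes\chi\bigr)\;\cong\;(\phi\circ\mathrm{det})\otimes\pi_{\chi}.
\]

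Restricting both sides to $\mathcal{K}$ and using that tensoring by a one-dimensional character distributes over direct sums, if the assumed decomposition reads $\mathrm{Res}_{\mathcal{K}}\pi_{\chi}\cong\bigoplus_{i}\sigma_{i}$ into irreducible $\mathcal{K}$-representations, then
\[
\mathrm{Res}_{\mathcal{K}}\,\pi_{(\phi\circ\mathrm{det})\otimes\chi}\;\cong\;\bigoplus_{i}\bigl((\phi\circ\mathrm{det})|_{\mathcal{K}}\otimes\sigma_{i}\bigr),
\]
and each summand remains irreducible, since tensoring an irreducible representation by a one-dimensional character preserves irreducibility. The argument is essentially formal; I do not expect any genuine obstacle beyond the mild book-keeping of restricting $\phi\circ\mathrm{det}$ consistently between $G$, $B$, $T$, and $\mathcal{K}$, and of verifying that the two natural meanings of ``$(\phi\circ\mathrm{det})\otimes\chi$'' (as the character of $T$ entering the induction, and as the restriction to $B$ of the global character $\phi\circ\mathrm{det}$ tensored with $\chi$) coincide.
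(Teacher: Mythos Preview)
Your proof is correct and follows essentially the same approach as the paper: apply Lemma~\ref{principalseriescharacterreduction5} to pull $\phi\circ\mathrm{det}$ outside the induction, distribute over the known decomposition upon restriction to $\mathcal{K}$, note that twisting by a one-dimensional character preserves irreducibility, and invoke Theorem~\ref{reduction-to-minimal-depth} to cover all characters of $T$. The only difference is cosmetic ordering and a bit of extra book-keeping on your part about the consistency of $(\phi\circ\mathrm{det})$ across $G$, $B$, $T$, and $\mathcal{K}$.
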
	
\begin{proof}
	Let $\chi$ be a character of $T$ of minimal depth, and suppose
	$$\mathrm{Res}_{\mathcal{K}}\pi_{\chi}=\bigoplus_{i\in I}\sigma_i,$$
	where $\sigma_i$ are irreducible $\mathcal{K}$-representations and $I$ is some index set. Let $\phi$ be a character of $E^{1}$. By Lemma~\ref{principalseriescharacterreduction5}, multiplying an induced representation by a character of the group is equivalent to instead just multiplying the inducing character, therefore we have $\pi_{(\phi\circ\mathrm{det})\otimes \chi}\cong (\phi\circ\mathrm{det})\otimes \pi_{\chi}.$
	Hence, the restriction of $\pi_{(\phi\circ\mathrm{det})\otimes \chi}$	to $\mathcal{K}$ decomposes as
	$$\mathrm{Res}_{\mathcal{K}}\pi_{\chi}=\bigoplus_{i\in I}(\phi\circ\mathrm{det})|_{\mathcal{K}}\otimes \sigma_i,$$
	and each of these components is again irreducible.
	Since every character of \( T \) is of the form \( (\phi \circ \mathrm{det}) \otimes \chi' \), where \( \chi' \) has minimal depth (by Theorem~\ref{reduction-to-minimal-depth}), it follows that  the decomposition of all principal series representations is determined by that of those associated to characters of minimal depth.  
\end{proof}	
\begin{remark}\label{truedepth0trivialcharacter}
	If $\chi$ is a character of $T$ of true depth $0$ such that $\chi\mid_{S_{0}}=\mathbbm{1}$, then, by Lemma~\ref{truedepth1} there exists a character $\phi\colon E^{1} \rightarrow \mathbb{C}^{\times}$ such that $\phi\circ \mathrm{det}\mid_{T}=\chi$. By Lemma~\ref{principalseriescharacterreduction5} we have $\mathrm{Ind}_{B}^{G}\chi\cong (\phi\circ \mathrm{det})\otimes \mathrm{Ind}_{B}^{G}\mathbbm{1}$.	Thus in this case we can always choose $\mathbbm{1}$ as the representative and provide branching rules for $\mathrm{Ind}_{B}^{G}\mathbbm{1}$.
\end{remark}

We conclude this section with another reduction lemma that will be useful in~\S\ref{restrictiontoK2rprincipalseries}.
\begin{lemma}\label{centralcharacterofprincipalserieshasdepth-zero}
	Let $\chi$ be a character of $T$ of minimal depth $r \ge 0$, and let $\pi_{\chi} = \mathrm{Ind}_{B}^{G}\chi$ be the corresponding principal series representation of $G$. 
	Then there exist a character $\phi$ of $E^{1}$ and a character $\chi_{0}$ of $T$ such that
	\[
	\chi = (\phi\circ \mathrm{det}) \otimes \chi_{0}, 
	\qquad 
	\pi_{\chi} \;\cong\;  (\phi\circ \mathrm{det}) \otimes \pi_{\chi_{0}},
	\]
	and moreover the central character $\theta$ of $\pi_{\chi_{0}}$ equals $\delta^{k}$ for some $k \in \{0,1\}$, where $\delta$ denotes the non-trivial quadratic character of $E^{1}$.
\end{lemma}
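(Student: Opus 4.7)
The plan is to reduce the statement to a computation on characters of $E^1$. The center $Z$ of $G$ consists of scalar matrices $zI$ with $z \in E^1$, and since $\overline{z}^{-1} = z$ for such $z$, we have $zI = t(z) \in T$; hence the central character of $\pi_{\chi}$ is the character $\theta(z) := \chi^{\dagger}(z)$ for $z \in E^1$. For any character $\phi$ of $E^1$, $\phi \circ \det$ is a character of $G$, and setting $\chi_0 := (\phi^{-1} \circ \det) \otimes \chi$ gives $\chi = (\phi \circ \det) \otimes \chi_0$; by Lemma~\ref{principalseriescharacterreduction5} we then obtain $\pi_{\chi} \cong (\phi \circ \det) \otimes \pi_{\chi_0}$. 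Since $\det(zI) = z^2$ for $z \in E^1$, the central character $\theta_0$ of $\pi_{\chi_0}$ satisfies $\theta_0(z) = \phi(z^2)^{-1}\theta(z)$. The lemma therefore reduces to the claim that for every character $\theta$ of $E^1$, there exist $\phi \in \widehat{E^1}$ and $k \in \{0,1\}$ with $\theta(z) = \phi(z^2)\, \delta^{k}(z)$ for all $z \in E^1$.

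I would then analyse the squaring map $\mathrm{sq}\colon E^1 \to E^1$ and its pullback $\mathrm{sq}^*\colon \phi \mapsto \phi \circ \mathrm{sq}$ on the character group $\widehat{E^1}$. Because $E/F$ is unramified and $p$ is odd, the Teichm\"uller decomposition of $\mathcal{O}_E^{\times}$ restricts to $E^1 \cong \mu_{q+1} \times W$, where $\mu_{q+1}$ is cyclic of order $q+1$ (which is even, since $q$ is odd) and $W = E^1 \cap (1+\mathfrak{p}_E)$ is pro-$p$, hence uniquely $2$-divisible. It follows that $E^1/(E^1)^2 \cong \mathbb{Z}/2\mathbb{Z}$ and $\ker(\mathrm{sq}) = \{\pm 1\}$. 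The image of $\mathrm{sq}^*$ consists precisely of the characters of $E^1$ trivial on $\{\pm 1\}$: one inclusion is immediate, while the other is proved by descending such a character through $\mathrm{sq}$ to a character of $(E^1)^2$ and extending it to $E^1$, using that $(E^1)^2$ is a closed subgroup of the compact abelian group $E^1$. Consequently $\mathrm{Im}(\mathrm{sq}^*)$ has index $2$ in $\widehat{E^1}$, and $\{1, \delta\}$ may be taken as representatives for the two cosets.

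To conclude, given $\theta$, there is a unique $k \in \{0,1\}$ with $\theta \cdot \delta^{-k} \in \mathrm{Im}(\mathrm{sq}^*)$; selecting $\phi$ so that $\phi \circ \mathrm{sq} = \theta \cdot \delta^{-k}$ produces the desired $\chi_0 = (\phi^{-1} \circ \det) \otimes \chi$ with central character $\theta_0 = \delta^{k}$. The main obstacle I anticipate is the second step, where one must pin down the cokernel of $\mathrm{sq}^*$ and verify the extension-of-characters claim cleanly, together with ensuring that the specific $\delta$ distinguished by the lemma does represent the non-trivial coset; everything else is bookkeeping via the twisting formula from Lemma~\ref{principalseriescharacterreduction5}.
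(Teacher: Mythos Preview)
Your approach is identical to the paper's: both identify the central character with $\mathrm{Res}_{E^1}\chi^\dagger$, twist by $\phi\circ\det$, invoke Lemma~\ref{principalseriescharacterreduction5}, and reduce to the claim that every character of $E^1$ lies in $(\widehat{E^1})^2 \cup \delta\,(\widehat{E^1})^2$. You supply more detail than the paper on the structure of $E^1$, but the skeleton is the same.

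The obstacle you flag is real, and it is a gap in both your argument and the paper's. You correctly show that the image of $\mathrm{sq}^*$ consists of the characters trivial on $\{\pm1\}$, so $\delta$ represents the non-trivial coset if and only if $\delta(-1)=-1$, equivalently $-1\notin (E^1)^2$. But $-1\in(E^1)^2$ exactly when $\mu_{q+1}$ contains a primitive fourth root of unity, i.e.\ when $4\mid q+1$, i.e.\ when $q\equiv 3\pmod 4$. In that case $\delta$ is itself a square in $\widehat{E^1}$, and a character of $E^1$ restricting to an order-$4$ character of $\mu_{q+1}$ is not of the form $\phi^2\delta^k$ for any $\phi$ and $k\in\{0,1\}$; the lemma as stated then fails. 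The paper's assertion ``every character $\theta$ of $E^1$ can be written as either $\theta=\phi^2$ or $\theta=\delta\phi^2$'' has precisely the same defect.

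What the applications in \S\ref{restrictiontoK2rprincipalseries} actually use is only that the central character of $\pi_{\chi_0}$ has depth zero, and this weaker conclusion does hold: since $\widehat{W}$ is uniquely $2$-divisible (where $W=E^1\cap(1+\mathfrak{p}_E)$), one can always choose $\phi$ so that $\theta\phi^{-2}$ is trivial on $W$, hence factors through $\mu_{q+1}$. So your plan is sound for the intended purpose, provided you replace ``$\delta^k$'' by ``a depth-zero character of $E^1$'' in the conclusion.
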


\begin{proof}
	Let $\theta = \mathrm{Res}_{Z}\chi$ denote the restriction of $\chi$ to the center $Z$. As noted in~\S\ref{the group}, the center $Z$ of $G$ can be identified with 
	$E^{1}$, the group of norm-one elements of $E$ over $F$.  The group $E^{1}/ (E^{1})^{2}$ is size $2$. Let $\delta$ denote the non-trivial quadratic character of $E^{1}$. Then $\delta$ has depth-zero and every character $\theta$ of $E^{1}$ can be written as either $\theta=\phi^{2}$ or $\theta=\delta \phi^{2}$ for some character $\phi$ of $E^{1}$. Define  $\chi_{0} := (\phi^{-1} \circ \mathrm{det})\otimes\chi  ;$ then $\mathrm{Res}_{Z}\chi_{0} = \delta^{k}$. We then have $\chi = (\phi\circ \mathrm{det}) \otimes \chi_{0}$. 
	Applying Lemma~\ref{principalseriescharacterreduction5}, the lemma follows.
\end{proof}

\section{A canonical decomposition upon restriction to $\mathcal{K}$}\label{a canonical decomposition}

Let $\chi$ be a character of $T$ of minimal depth $r$, and let  $(\pi_{\chi}, V_{\chi})$ be the associated principal series representation.  In this section, we describe the decomposition of the restriction $\pi_{\chi}|_{\mathcal{K}}$ into irreducible $\mathcal{K}$-representations.

Since $G=\mathcal{K}B$, applying Mackey theory we obtain
\begin{equation}
	\mathrm{Res}_{\mathcal{K}}^{G}\mathrm{Ind}_{B}^{G}\chi=\mathrm{Ind}_{B\cap\mathcal{K}}^{\mathcal{K}}\chi.
\end{equation}

Since irreducible representations of compact groups are finite dimensional, we note that $V_{\chi}$ is highly reducible as a representation of $\mathcal{K}$. Thus we need to decompose $V_{\chi}$ further into irreducible $\mathcal{K}$- representations.
As discussed in~\S\ref{representation theory},  the decomposition of  $V_{\chi}$ reduces to decomposing each $V_{\chi}^{\mathcal{K}_n}$ for all $n\geq 0$.

\begin{lemma}\label{branch1}
If $\chi$ has depth $r$ then so does $\pi_{\chi}$. In particular, if \( \chi \neq \mathbbm{1} \) then
\[
V_{\chi}^{\mathcal{K}_n} = \{0\} \quad \text{for all } n \leq r.
\]

\end{lemma}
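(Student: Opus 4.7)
The plan is to work on the $\mathcal{K}$-side using the Iwasawa-type decomposition $G=\mathcal{K}B$ (already implicit in Proposition~\ref{double coset for principal series}), which by Mackey gives the identification
\[
\mathrm{Res}_{\mathcal{K}}\pi_{\chi} \;\cong\; \mathrm{Ind}_{\mathcal{B}}^{\mathcal{K}}\chi.
\]
Thus $V_{\chi}^{\mathcal{K}_n}$ can be identified with the space of functions $f\colon\mathcal{K}\to\mathbb{C}$ satisfying both $f(bx)=\chi(b)f(x)$ for $b\in\mathcal{B}$ and $f(xk)=f(x)$ for $k\in\mathcal{K}_n$.

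First I would establish the ``in particular'' statement, which is the core of the argument. Suppose $f\in V_{\chi}^{\mathcal{K}_n}$ is non-zero, and pick $x\in\mathcal{K}$ with $f(x)\neq 0$. For any $b\in\mathcal{B}\cap\mathcal{K}_n$, normality of $\mathcal{K}_n$ in $\mathcal{K}$ (a standard property of Moy--Prasad filtration subgroups, as recalled in~\S\ref{the group}) guarantees $x^{-1}bx\in\mathcal{K}_n$, and therefore
\[
\chi(b)f(x) \;=\; f(bx) \;=\; f\bigl(x\cdot (x^{-1}bx)\bigr) \;=\; f(x),
\]
forcing $\chi(b)=1$. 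Since $\mathcal{B}=TU$ with $\chi$ trivial on $U$, and since $T\cap\mathcal{K}_n=T_n$ from the filtration description, this yields $\chi|_{T_n}=\mathbbm{1}$. For $n\leq r$ the filtration is decreasing, so $T_r\subset T_n$, and this would force $\chi|_{T_r}=\mathbbm{1}$, contradicting the hypothesis that $\chi$ has depth $r$. Hence $V_{\chi}^{\mathcal{K}_n}=\{0\}$ for all $n\leq r$.

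To complete the depth claim for $\pi_{\chi}$, I would exhibit an explicit $\mathcal{K}_{r+1}$-fixed vector: since $\chi|_{T_{r+1}}=\mathbbm{1}$ and $\chi$ is trivial on all of $U$, the function supported on $\mathcal{B}\mathcal{K}_{r+1}$ defined by $f(bk)=\chi(b)$ is well-defined (the only ambiguity comes from $\mathcal{B}\cap\mathcal{K}_{r+1}\subset T_{r+1}U$, on which $\chi$ is trivial) and manifestly $\mathcal{K}_{r+1}$-fixed. Combined with the vanishing above, this pins the depth of $\pi_{\chi}$ to be exactly $r$.

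The only real subtlety is the invocation of the normality of $\mathcal{K}_n$ in $\mathcal{K}$. This is what collapses what would otherwise be a delicate double-coset Mackey sum over $\mathcal{B}\backslash\mathcal{K}/\mathcal{K}_n$ (analogous to Proposition~\ref{double coset for principal series}) to the single uniform condition $\chi|_{T_n}=\mathbbm{1}$; everything else is a direct unwinding of the induced-representation definition together with the depth hypothesis on $\chi$.
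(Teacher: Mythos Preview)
Your argument is correct and follows essentially the same approach as the paper: both use normality of $\mathcal{K}_n$ in $\mathcal{K}$ to show that a nonzero $\mathcal{K}_n$-fixed vector forces $\chi$ to be trivial on $\mathcal{B}\cap\mathcal{K}_n$, contradicting the depth hypothesis. Your version is in fact slightly more complete, since you also exhibit an explicit $\mathcal{K}_{r+1}$-fixed vector to pin the depth at exactly $r$, whereas the paper's proof of this lemma only establishes the vanishing and leaves nonvanishing of $V_{\chi}^{\mathcal{K}_{r+1}}$ to the subsequent Lemma~\ref{principalseriescharacterreduction6}.
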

\begin{proof}
Let \( n \leq r \). We want to show that \( V_{\chi}^{\mathcal{K}_n} = \{0\} \). Assume on the contrary that there exists \( f \neq 0 \in V_{\chi}^{\mathcal{K}_n} \). Then there exists \( x \in \mathcal{K} \) such that \( f(x) \neq 0 \). Since \( \chi \) has depth \( r \), there exists \( b \in B \cap \mathcal{K}_n \) such that \( \chi(b) \neq 1 \). As \( \mathcal{K}_n \trianglelefteq \mathcal{K} \), the conjugate \( t := x^{-1} b x \in \mathcal{K}_n \), and it acts trivially on $f$. This gives
\[
f(x) = \pi_{\chi}(t)f(x) = f(x t) = f(b x) = \chi(b) f(x),
\]
which is impossible since \( \chi(b) \neq 1 \) and \( f(x) \neq 0 \). Therefore, \( V_{\chi}^{\mathcal{K}_n} = \{0\} \) when \( n \leq r \).

\end{proof}	 
Observe that if $n$ is greater than the depth of $\chi$, then $\chi$ acts trivially on $B\cap\mathcal{K}_{n}$. Thus the restriction of $\chi$ to $B\cap\mathcal{K}$ extends trivially to a character of $(B\cap\mathcal{K})\mathcal{K}_{n}$. As in \S\ref{the group}, for each $n\geq 0$, let us denote $B\cap\mathcal{K}_{n}$ by $\mathcal{B}_{n}$. Since $\mathcal{K}_{0}=\mathcal{K}$ we simply denote $\mathcal{B}_{0}$ by $\mathcal{B}$.

\begin{lemma}\label{principalseriescharacterreduction6}
Let $r\in\mathbb{Z}_{\geq 0}$, and let \( \chi  \) be a character of \( T \) of depth $r$. Then for any \( n \geq r+1\), we have
\[
V_{\chi}^{\mathcal{K}_{n}} \cong \mathrm{Ind}_{\mathcal{B}\mathcal{K}_{n}}^{\mathcal{K}} \chi.
\]	
\end{lemma}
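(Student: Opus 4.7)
The plan is to combine the Iwasawa-type decomposition $G=\mathcal{K}B$ with a bookkeeping argument that trades right $\mathcal{K}_{n}$-invariance for left $\mathcal{K}_{n}$-equivariance. Since $G=\mathcal{K}B$, Mackey theory (already invoked at the start of the section) gives
\[
\mathrm{Res}_{\mathcal{K}}V_{\chi}\;=\;\mathrm{Res}_{\mathcal{K}}\mathrm{Ind}_{B}^{G}\chi\;\cong\;\mathrm{Ind}_{\mathcal{B}}^{\mathcal{K}}\chi,
\]
so the task reduces to identifying the $\mathcal{K}_{n}$-fixed subspace of $\mathrm{Ind}_{\mathcal{B}}^{\mathcal{K}}\chi$ with $\mathrm{Ind}_{\mathcal{B}\mathcal{K}_{n}}^{\mathcal{K}}\chi$.

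Next I would verify that $\chi$ extends to a character $\tilde{\chi}$ of $\mathcal{B}\mathcal{K}_{n}$ by declaring it trivial on $\mathcal{K}_{n}$. Since $\chi$ has depth $r$ and $n\geq r+1$, the character $\chi$ is trivial on $T\cap\mathcal{K}_{n}=T_{n}$, and it is also trivial on $U$; hence $\chi|_{\mathcal{B}_{n}}=\mathbbm{1}$. Thus if $bk=b'k'$ with $b,b'\in\mathcal{B}$ and $k,k'\in\mathcal{K}_{n}$, then $b'^{-1}b=k'k^{-1}\in\mathcal{B}_{n}$ and $\chi(b'^{-1}b)=1$, so the prescription $\tilde{\chi}(bk):=\chi(b)$ is well defined.

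The heart of the argument is to show that the identity map on functions, $f\mapsto f$, is a $\mathcal{K}$-equivariant bijection between $(\mathrm{Ind}_{\mathcal{B}}^{\mathcal{K}}\chi)^{\mathcal{K}_{n}}$ and $\mathrm{Ind}_{\mathcal{B}\mathcal{K}_{n}}^{\mathcal{K}}\tilde{\chi}$. For the forward direction, let $f$ be left $\mathcal{B}$-equivariant and right $\mathcal{K}_{n}$-invariant, and let $b\ell\in\mathcal{B}\mathcal{K}_{n}$ with $b\in\mathcal{B}$, $\ell\in\mathcal{K}_{n}$. The key point is that $\mathcal{K}_{n}$ is normal in $\mathcal{K}$, so for any $k\in\mathcal{K}$ the element $\ell':=k^{-1}\ell k$ lies in $\mathcal{K}_{n}$, whence
\[
f(b\ell k)\;=\;\chi(b)\,f(\ell k)\;=\;\chi(b)\,f(kk^{-1}\ell k)\;=\;\chi(b)\,f(k\ell')\;=\;\chi(b)\,f(k)\;=\;\tilde{\chi}(b\ell)\,f(k).
\]
For the reverse inclusion, any $f\in\mathrm{Ind}_{\mathcal{B}\mathcal{K}_{n}}^{\mathcal{K}}\tilde{\chi}$ is manifestly left $\mathcal{B}$-equivariant, and for $\ell\in\mathcal{K}_{n}$ and $k\in\mathcal{K}$ normality again gives $k\ell k^{-1}\in\mathcal{K}_{n}\subset\mathcal{B}\mathcal{K}_{n}$, so
\[
f(k\ell)\;=\;f\bigl((k\ell k^{-1})k\bigr)\;=\;\tilde{\chi}(k\ell k^{-1})\,f(k)\;=\;f(k),
\]
showing right $\mathcal{K}_{n}$-invariance. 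The two constructions are mutually inverse, and $\mathcal{K}$-equivariance is automatic since the $\mathcal{K}$-action on both spaces is right translation.

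The only non-obvious ingredient is the normality of $\mathcal{K}_{n}$ in $\mathcal{K}$, which converts the asymmetry between left-equivariance and right-invariance into the desired left $\mathcal{B}\mathcal{K}_{n}$-equivariance; everything else is a direct unwinding of definitions, together with the depth hypothesis that guarantees the extension $\tilde{\chi}$ exists.
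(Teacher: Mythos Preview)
Your proof is correct and follows essentially the same approach as the paper: both identify the two spaces of functions directly via the identity map, using the normality of $\mathcal{K}_{n}$ in $\mathcal{K}$ to convert right $\mathcal{K}_{n}$-invariance into left $\mathcal{K}_{n}$-invariance. Your version is somewhat more explicit---you spell out the well-definedness of the extension $\tilde{\chi}$ and compute both inclusions---whereas the paper simply observes that left and right cosets of a normal subgroup coincide, but the underlying argument is identical.
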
	
\begin{proof}
We show that the vectors spaces on which the two representation are acting are in fact the same.  We have
\[
W_{1}
:=V_{\chi}^{\mathcal{K}_{n}}
=\Bigl\{
f:\mathcal{K}\to\mathbb{C}
\,\Bigm|\,
f(bk)=\chi(b)f(k)\;\forall b\in B\cap\mathcal{K},\;\mathrm{and}\;
f(kk_{n})=f(k)\;\forall k_{n}\in\mathcal{K}_{n}
\Bigr\}.
\]
and set
\[
W_{2}
:=\mathrm{Ind}_{\mathcal{B}\mathcal{K}_{n}}^{\mathcal{K}} \chi
=\Bigl\{
\ell:\mathcal{K}\to\mathbb{C}
\,\Bigm|\,
\ell(bk)=\chi(b)\ell(k),\;\mathrm{and}\;\ell(k_{n}k)=\ell(k)\;
\;\forall b\in \mathcal{B},\;\forall k_{n}\in\mathcal{K}_{n}\; \forall k\in\mathcal{K}
\Bigr\}.
\]	
Note that $W_{1}$ consists of function $f\colon \mathcal{K}\rightarrow \mathbb{C}$ that are constant on the left cosets of $\mathcal{K}_{n}$. Since $\mathcal{K}_{n}$ is a normal subgroup of $\mathcal{K}$, the left and the right cosets agree. Therefore, the spaces \( W_{1} \) and \( W_{2} \) are equal, and hence the corresponding representations are isomorphic.
\end{proof}

\begin{lemma}\label{dimensionofVchiKn}
Let $r\in\mathbb{Z}_{\geq 0}$, and let \( \chi  \) be a character of \( T \) of depth $r$. Then for any \( n \geq r+1\), the depth of every irreducible component of $V_{\chi}^{\mathcal{K}_{n}}$ is less than $n$, and the degree of $V_{\chi}^{\mathcal{K}_{n}}$ equals $q^{n-1}(q+1)$.
\end{lemma}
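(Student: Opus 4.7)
The plan is to invoke Lemma~\ref{principalseriescharacterreduction6} to identify $V_{\chi}^{\mathcal{K}_n}$ with the finite-index induced representation $\mathrm{Ind}_{\mathcal{B}\mathcal{K}_n}^{\mathcal{K}}\chi$, where $\chi$ is extended trivially to $\mathcal{K}_n$. Both assertions of the lemma then reduce to statements about this induced representation.

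For the depth claim, I would observe that the inducing character is by construction trivial on the normal subgroup $\mathcal{K}_n$ of $\mathcal{B}\mathcal{K}_n$. Consequently the full induced representation factors through the finite quotient $\mathcal{K}/\mathcal{K}_n$, so every irreducible subrepresentation contains $\mathcal{K}_n$ in its kernel and therefore has depth at most $n-1 < n$.

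For the degree, since $\chi$ is one-dimensional, $\dim \mathrm{Ind}_{\mathcal{B}\mathcal{K}_n}^{\mathcal{K}}\chi = [\mathcal{K} : \mathcal{B}\mathcal{K}_n] = |\mathcal{K}/\mathcal{K}_n|/|\mathcal{B}/\mathcal{B}_n|$. I would compute these two cardinalities using the Moy--Prasad filtration: the parahoric quotient $\mathcal{K}/\mathcal{K}_1 \cong \mathbb{U}(1,1)(\mathfrak{f})$ has order $q(q+1)(q^2-1)$; each subsequent quotient $\mathcal{K}_i/\mathcal{K}_{i+1}$ is naturally isomorphic to $\mathfrak{k}_i/\mathfrak{k}_{i+1}$, of cardinality $q^4$ corresponding to the four free parameters $(u,v,y,z)$ in the Lie algebra description from~\S\ref{the group}; and $\mathcal{B}/\mathcal{B}_n$ factors through the parameters $(a,b) \in \mathcal{O}_E^\times/(1+\mathfrak{p}_E^n) \times \mathcal{O}_F/\mathfrak{p}_F^n$, of total order $(q^2-1)q^{3n-2}$. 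Dividing yields $(q+1)q^{n-1}$, as claimed.

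No step poses a serious obstacle: the depth assertion is immediate from the inducing data being trivial on $\mathcal{K}_n$, and the degree reduces to a routine index computation. The only delicate points are verifying that the filtration subquotients of $\mathcal{K}_i$ for $i\geq 1$ really are four-dimensional over $\mathfrak{f}$ (which follows from the explicit form of $\mathfrak{g}$ in~\S\ref{the group} together with the unramifiedness of $E/F$), and recalling the order of the finite unitary group $\mathbb{U}(1,1)(\mathfrak{f})$.
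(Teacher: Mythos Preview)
Your proposal is correct and follows essentially the same route as the paper. The only minor difference is that for the depth claim the paper argues even more directly: by definition $\mathcal{K}_{n}$ acts trivially on $V_{\chi}^{\mathcal{K}_{n}}$, so every irreducible component has depth strictly less than $n$ --- there is no need to pass through the induced model, though your argument via Lemma~\ref{principalseriescharacterreduction6} is equally valid. The degree computation is identical in spirit: both you and the paper reduce to the index $[\mathcal{K}:\mathcal{B}\mathcal{K}_{n}]$ and evaluate it via the filtration quotients, arriving at $(q+1)q^{n-1}$.
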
	
\begin{proof}
Let $n\geq r+1$, by definition of $V_{\chi}^{\mathcal{K}_{n}}$, $\mathcal{K}_{n}$ acts trivially on the entire representation, hence on every irreducible component. Hence the maximal depth of any irreducible component of $V_{\chi}^{\mathcal{K}_{n}}$ must be strictly less that $n$.

The degree of $V_{\chi}^{\mathcal{K}_{n}}$ is given by the index $[\mathcal{K}\colon \mathcal{B}\mathcal{K}_{n}].$ Since $\mathcal{K}_{n}\trianglelefteq \mathcal{K}$, and $\mathcal{K}_{n}\trianglelefteq \mathcal{B}\mathcal{K}_{n}$, applying group isomorphism theorems the above index can be computed as
$$[\mathcal{K}\colon \mathcal{B}\mathcal{K}_{n}]=\frac{[\mathcal{K}\colon\mathcal{K}_{1}][\mathcal{K}_{1}\colon\mathcal{K}_{n}]}{[\mathcal{B}\colon \mathcal{B}_{1}][\mathcal{B}_{1}\colon \mathcal{B}_{n}]}.
$$
Since $\mathcal{K}/ \mathcal{K}_{1}\cong \mathbb{U}(1,1)(\mathfrak{f})$ and $\mathcal{B}/\mathcal{B}_1\cong \mathbb{B}(\mathfrak{f})$, the index $[\mathcal{K}\colon \mathcal{K}_{1}]=|\mathbb{U}(1,1)(\mathfrak{f}|=q(q+1)(q-1)^{2}$, and the index $[\mathcal{B}\colon \mathcal{B}_{1}]=|\mathbb{B}(\mathfrak{f})|=q(q^2-1)$ respectively. The remaining indices  $[\mathcal{K}_{1}\colon\mathcal{K}_{n}], [\mathcal{B}_{1}\colon \mathcal{B}_n] $ are same as the indices of corresponding $\mathcal{O}_{F}$-modules in the Lie algebra. Thus we have
$$[\mathcal{K}\colon \mathcal{B}\mathcal{K}_{n}]=\frac{[\mathcal{K}\colon\mathcal{K}_{1}][\mathcal{K}_{1}\colon\mathcal{K}_{n}]}{[\mathcal{B}\colon \mathcal{B}_{1}][\mathcal{B}_{1}\colon \mathcal{B}_{n}]}=\frac{q(q^2-1)(q+1)q^{4(n-1)}}{q(q^2-1)q^{3(n-1)}}=(q+1)q^{n-1}.$$
\end{proof}

We now come to the major technical result of this section, which will serve as the key ingredient in establishing a canonical decomposition upon restriction to $\mathcal{K}$.
\begin{proposition}\label{dimension1principalseries} Let $\chi$ be a character of $T$ of minimal depth $r$. Then for $d\geq r+1$ we have
$$\mathrm{dim}_{\mathbb{C}}(\mathrm{Hom}_{\mathcal{K}}(V_{\chi}^{\mathcal{K}_{d}}, V_{\chi}^{\mathcal{K}_{d}}))=\begin{cases} d+1 &\mathrm{if}\;r=0\;\mathrm{and}\;\chi\mid_{S_{0}}= \mathbbm{1},\\
	d-r&\mathrm{otherwise.}\end{cases}$$
	\end{proposition}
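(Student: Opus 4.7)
The plan is to compute the intertwining number using Mackey decomposition and Frobenius reciprocity. By Lemma~\ref{principalseriescharacterreduction6}, $V_{\chi}^{\mathcal{K}_d}\cong \mathrm{Ind}_{\mathcal{B}\mathcal{K}_d}^{\mathcal{K}}\chi$, so writing $H=\mathcal{B}\mathcal{K}_d$ and $H_g=H\cap gHg^{-1}$, one has
\[
\dim_{\mathbb{C}}\mathrm{Hom}_{\mathcal{K}}\bigl(V_{\chi}^{\mathcal{K}_d},V_{\chi}^{\mathcal{K}_d}\bigr)=\#\bigl\{g\in H\backslash\mathcal{K}/H:\chi(h)=\chi(g^{-1}hg)\text{ for all }h\in H_g\bigr\}.
\]
First I would enumerate double cosets. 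Since $\mathcal{B}\mathcal{K}_d=\{k\in\mathcal{K}:k_{21}\in\mathfrak{p}_E^d\}$, a short calculation shows that $\min(\nu(k_{21}),d)$ is invariant under the two-sided action of $H$; combined with Proposition~\ref{double coset for principal series}, this gives exactly $d+1$ representatives: $I$, $\mathrm{w}$, and $w_k:=\begin{psmallmatrix}1&0\\ \sqrt{\epsilon}\varpi^k&1\end{psmallmatrix}$ for $1\leq k\leq d-1$.

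Next I would check invariance at each representative. The identity contributes $1$ trivially. For $g=\mathrm{w}$, one finds $H_{\mathrm{w}}=T_0\mathcal{K}_d$ and conjugation by $\mathrm{w}$ sends $\mathrm{diag}(x,\overline{x}^{-1})$ to $\mathrm{diag}(\overline{x}^{-1},x)$, so the condition reduces to $\chi^{\dagger}(x\overline{x})=1$ for all $x\in\mathcal{O}_E^{\times}$, which by surjectivity of $\mathrm{N}_{E/F}\colon\mathcal{O}_E^{\times}\twoheadrightarrow\mathcal{O}_F^{\times}$ is equivalent to $\chi|_{S_0}=\mathbbm{1}$. For $g=w_k$, I would parametrize $h\in H_{w_k}$ by its $(1,1)$-entry $a\in\mathcal{O}_E^{\times}$ together with a parameter $y\in\mathcal{O}_F$ determined by $h_{12}\equiv\sqrt{\epsilon}ay\pmod{\mathfrak{p}_E^d}$; this is forced by the unitary relation $\overline{h_{12}}h_{22}\in\sqrt{\epsilon}F$ combined with $h_{22}\equiv\overline{a}^{-1}\pmod{\mathfrak{p}_E^d}$. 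The defining condition for $H_{w_k}$ then translates into $\mathrm{N}_{E/F}(a)(1+\epsilon\varpi^ky)\equiv 1\pmod{\mathfrak{p}_F^{d-k}}$. Direct computation of $w_k^{-1}hw_k$ and extraction of its Borel part modulo $\mathcal{B}_d$ gives that the $(1,1)$-entry of that Borel part is congruent to $a(1-\mathrm{N}_{E/F}(a)\epsilon\varpi^ky)^{-1}$ modulo $\mathfrak{p}_E^d$; substituting the constraint and simplifying yields $\chi(w_k^{-1}hw_k)=\chi(h)\cdot\chi^{\dagger}(1+\epsilon\varpi^ky)^{-1}$. The invariance condition is thus $\chi^{\dagger}|_{1+\mathfrak{p}_F^k}=\mathbbm{1}$, equivalently $k\geq r+1$.

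Summing the contributions gives the proposition: when $r=0$ and $\chi|_{S_0}=\mathbbm{1}$, all $d+1$ cosets contribute; otherwise (automatic for $r\geq 1$, since $S_r\subset S_0$ forces $\chi|_{S_0}\neq\mathbbm{1}$), only $I$ together with the $w_k$ for $k\geq r+1$ survive, for a total of $1+(d-1-r)=d-r$.

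The main obstacle is the parametrization of $H_{w_k}$. A naive Iwahori-style factorization $(U\cap H_{w_k})(T\cap H_{w_k})(U^-\cap\mathcal{K}_d)$ turns out to be a \emph{proper} subgroup of $H_{w_k}$ whenever $k\neq d/2$, as one checks by comparing orders modulo $\mathcal{K}_d$. Relying only on that factorization would produce the weaker and incorrect condition $\chi^{\dagger}|_{1+\mathfrak{p}_F^{d-k}}=\mathbbm{1}$ coming from the upper unipotent piece alone. The technical heart of the proof is recognizing that the unitary relation on $h_{12}$ introduces a parameter $y$ ranging over all of $\mathcal{O}_F$ (not merely $\mathfrak{p}_F^{\max(d-2k,0)}$), which is what produces the sharp condition $\chi^{\dagger}|_{1+\mathfrak{p}_F^k}=\mathbbm{1}$.
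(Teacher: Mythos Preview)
Your approach is essentially the same as the paper's: both use Mackey--Frobenius to reduce to counting the double cosets in Proposition~\ref{double coset for principal series} on which $\chi$ and $\chi^{g}$ agree, treat $I$ and $\mathrm{w}$ identically, and for $w_k$ arrive at the criterion $\chi^{\dagger}|_{1+\mathfrak{p}_F^{k}}=\mathbbm{1}$. The only stylistic difference is that the paper splits the $w_k$ analysis into the cases $k>r$ (direct verification) and $k\le r$ (exhibiting a specific $h$ with $a\in 1+\mathfrak{p}_F^{k}$ and $u=(a^{-1}-a)/(\sqrt{\epsilon}\varpi^{k})$), whereas you obtain a single formula; one small slip is that the ratio should be $\chi^{\dagger}(1+\epsilon\varpi^{k}y)$ rather than its inverse (compute via the $(1,1)$-entry $a(1+\epsilon\varpi^{k}y)$ of $w_k^{-1}hw_k$), but this does not affect the conclusion.
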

	\begin{proof}
Let $d\geq r+1$. By Lemma~\ref{principalseriescharacterreduction6} we have $\mathrm{Ind}_{\mathcal{B}\mathcal{K}_{d}}^{\mathcal{K}} \chi\cong V_{\chi}^{\mathcal{K}_{d}}$. Thus
$$\mathrm{Hom}_{\mathcal{K}}( V_{\chi}^{\mathcal{K}_{d}},  V_{\chi}^{\mathcal{K}_{d}})\cong\mathrm{Hom}_{\mathcal{K}}(\mathrm{Ind}_{\mathcal{B}\mathcal{K}_{d}}^{\mathcal{K}} \chi, \mathrm{Ind}_{\mathcal{B}\mathcal{K}_{d}}^{\mathcal{K}} \chi).$$
Using Frobenius reciprocity and Mackey theory we have 
\begin{align*}\mathrm{Hom}_{\mathcal{K}}(\mathrm{Ind}_{\mathcal{B}\mathcal{K}_{d}}^{\mathcal{K}} \chi, \mathrm{Ind}_{\mathcal{B}\mathcal{K}_{d}}^{\mathcal{K}} \chi)&=\bigoplus_{g\in\mathcal{B}\mathcal{K}_{d}\backslash \mathcal{K}/ \mathcal{B}\mathcal{K}_{d}}\mathrm{Hom}_{(\mathcal{B}\mathcal{K}_{d})\cap (\mathcal{B}\mathcal{K}_{d})^g}(\chi, \chi^{g}).\end{align*}	
From Proposition~\ref{double coset for principal series} it follows that a set of double coset representatives for $(\mathcal{B}\mathcal{K}_{d})\backslash \mathcal{K}/ (\mathcal{B}\mathcal{K}_{d}) $ is given by
\begin{equation}
	P:=	\left\{\mathrm{id}:=\begin{pmatrix}1&0\\0&1\end{pmatrix},\;\mathrm{w}:=\begin{pmatrix}0&1\\1&0\end{pmatrix},\;\begin{pmatrix}1&0\\ \sqrt{\epsilon}\varpi^{k}&1\end{pmatrix}\;\middle|\; 1\leq k<d\right\}.
\end{equation}	
Thus, the dimension of the homomorphism space  $\mathrm{Hom}_{\mathcal{K}}(V_{\chi}^{\mathcal{K}_{d}}, V_{\chi}^{\mathcal{K}_{d}})$ is equal to the number of elements $g\in P$ such that 
\begin{equation}\chi(h)=\chi^{g}(h)\;\hspace{2em}\;\mathrm{for\;all}\; h\in(\mathcal{B}\mathcal{K}_{d})\cap (\mathcal{B}\mathcal{K}_{d})^g. \end{equation}
Let  $bk_{d}\in \mathcal{B}\mathcal{K}_{d}$, where $b=\begin{pmatrix}a&u\\ 0&\overline{a}^{-1}\end{pmatrix}\in\mathcal{B}$, and $k_{d}\in\mathcal{K}_{d}$. Since $\mathcal{K}_{d}\trianglelefteq\mathcal{K}$,  we have $g^{-1}k_{d}g\in\mathcal{K}_{d}$ for all $g\in P$. Therefore, for all $g\in P$,  $bk_{d}\in (\mathcal{B}\mathcal{K}_{d})^{g}$ if and only if $g^{-1}bg\in \mathcal{B}\mathcal{K}_{d}$. Moreover,
$$\chi^{g}(bk_{d})=\chi(g^{-1}bgg^{-1}k_dg)=\chi(g^{-1}bg)\chi(g^{-1}k_{d}g)=\chi(g^{-1}bg).$$
Thus, the dimension of homomorphism space $\mathrm{Hom}_{\mathcal{K}}(V_{\chi}^{\mathcal{K}_{d}}, V_{\chi}^{\mathcal{K}_{d}})$ reduces to finding  the number of elements $g\in P$ such that 
\begin{equation}\label{chiintertwine}\chi(b)=\chi^{g}(b)\;\hspace{2em}\;\mathrm{for\;all}\; b\in(\mathcal{B}\cap (\mathcal{B}\mathcal{K}_{d})^g). \end{equation}
Clearly $g=\mathrm{id}$ satisfies~\eqref{chiintertwine}. Now let $g=\mathrm{w}$. We compute
\begin{align*}
	\mathrm{w}^{-1}b\mathrm{w}=\begin{pmatrix}\overline{a}^{-1}&0\\ u&a\end{pmatrix}=\begin{pmatrix}\overline{a}^{-1}&0\\ 0&a
	\end{pmatrix}\begin{pmatrix}1&0\\a^{-1}u&1\end{pmatrix}.
\end{align*}
Thus $\mathrm{w}^{-1}b\mathrm{w}\in \mathcal{B}\mathcal{K}_{d}$, for all $a\in\mathcal{O}_{E}^{\times}$, and for all $u\in\mathfrak{p}_{E}^{d}$ such that $\overline{a}u\in \sqrt{\epsilon}F$.

Since $\chi$ is trivial on $\mathcal{K}_{d}$, we then have   $$\chi^{\mathrm{w}}(b)=\chi(\mathrm{w}^{-1}b\mathrm{w})=\chiE(\overline{a}^{-1}).$$

For the characters to agree we must have $\chiE(a)=\chiE(\overline{a}^{-1})$ for all $a\in\mathcal{O}_{E}^{\times}$, which implies that $\chiE(a\overline{a})=1$ for all $a\in\mathcal{O}_{E}^{\times}$ which happen only when $\chiE\mid_{\mathcal{O}_{F}^{\times}}= \mathbbm{1}$ since the norm map is surjection on $\mathcal{O}_{F}^{\times}$. Thus $\mathrm{w}$ satisfies~\eqref{chiintertwine} only if $\chi\mid_{S_{0}}=\mathbbm{1}$.

Lastly, let $g_k=\begin{pmatrix}1&0\\ \sqrt{\epsilon}\varpi^{k}&1\end{pmatrix}$. Then 
\begin{align*}
	g_{k}^{-1}bg_{k}&=\begin{pmatrix}a+u\sqrt{\epsilon}\varpi^{k}&u\\ (-a+\overline{a}^{-1})\sqrt{\epsilon}\varpi^{k}-u\epsilon\varpi^{2k}& \overline{a}^{-1}-u\sqrt{\epsilon}\varpi^{k}\end{pmatrix}.
\end{align*}	
If $g_{k}^{-1}bg_{k}\in \mathcal{B}\mathcal{K}_{d}$, then there exists elements $t:=\begin{pmatrix}x&y\\ 0&\overline{x}^{-1}\end{pmatrix}\in \mathcal{B}$, and $k_{d}:=\begin{pmatrix}1+c\varpi^{d}& e\varpi^{d}\\ f\varpi^{d}& 1+l\varpi^{d}\end{pmatrix}\in\mathcal{K}_{d}$ with $x\in\mathcal{O}_{E}^{\times}$, $y, c, e, f, l\in\mathcal{O}_{E}$ such that 
\begin{equation}\label{eqtosolve2}g_{k}^{-1}bg_{k}=tk_{d}.\end{equation} 
Solving~\eqref{eqtosolve2} for $x$ yields $x=a(1+c\varpi^{d})^{-1}(1+a^{-1}\sqrt{\epsilon}\varpi^{k}-a^{-1}fy\varpi^{d})$. If $k>r$, then $(1+a^{-1}\sqrt{\epsilon}\varpi^{k}-a^{-1}fy\varpi^{d}), (1+c\varpi^{d})^{-1}\in\mathfrak{p}_{E}^{r+1}$. Since $\chi$ has depth $r$, we have
$$\chi^{g_{k}}(b)=\chiE(x)=\chiE(a)\chiE((1+c\varpi^{d})^{-1})\chiE(1+a^{-1}\sqrt{\epsilon}\varpi^{k}-a^{-1}fy\varpi^{d})=\chiE(a)=\chi(b).$$

Hence  for $k>r$, $g_{k}$ satisfies~\eqref{chiintertwine} on the given intersection. 

We claim that for $1 \leq k \leq r$, the element $g_k$ does not intertwine $\chi$. Since $k \leq r$ and $\chi$ has true depth $r$, we have $\chiE\mid_{1+\mathfrak{p}_F^{k}} \neq 1$. This key observation allows us to find elements $h \in \mathcal{B}\mathcal{K}_{d} \cap (\mathcal{B}\mathcal{K}_{d})^{g_k}$ such that $\chi(h) \neq \chi^{g_k}(h)$, thereby proving the claim.

Given any $a = 1 + a_0 \varpi^k \in 1 + \mathfrak{p}_F^k$, we set $u := \frac{a^{-1} - a}{\sqrt{\epsilon} \varpi^k} \in \sqrt{\epsilon} \mathcal{O}_F,$ and we define 
\[
h := \begin{pmatrix} a & u \\ 0 & a^{-1} \end{pmatrix}\in\mathcal{B}.
\]
Then the (2, 1) entry of $g_{k}^{-1}hg_{k}$ equals $0$, and hence $h$ lies in $\mathcal{B}\mathcal{K}_d \cap (\mathcal{B}\mathcal{K}_d)^{g_k}$. We also have
$$g_{k}^{-1}hg_{k}=\begin{pmatrix}a+u\sqrt{\epsilon}\varpi^{k}&u\\ 0& a^{-1}-u\sqrt{\epsilon}\varpi^{k}\end{pmatrix}=\begin{pmatrix}a&0\\0&a^{-1}\end{pmatrix}\begin{pmatrix}1+a^{-1}u\sqrt{\epsilon}\varpi^{k}&a^{-1}u\\ 0&1-au\sqrt{\epsilon}\varpi^{k}\end{pmatrix}.$$

We compute
\[
\chi^{g_k}(h) = \chi(g_{k}^{-1}hg_{k})=\chiE(a)\chiE(1 + a^{-1} u \sqrt{\epsilon} \varpi^k).
\]
As $a$ ranges over all elements of $1 + \mathfrak{p}_F^k$, the term $1 + a^{-1} u \sqrt{\epsilon} \varpi^k$ ranges over all of $1 + \mathfrak{p}_F^k$ as well. Therefore, we can choose $a$ such that $\chiE(1 + a^{-1} u \sqrt{\epsilon} \varpi^k) \neq 1$, and hence $\chi(h) \neq \chi^{g_k}(h)$ as required. 

Therefore, if $\chi|_{S_{0}} = \mathbbm{1}$, then all elements of $P$ satisfy~\eqref{chiintertwine}, therefore the dimension equals the cardinality of $P$ which is $d+1$. On the other hand if $\chi\mid_{S_{0}}\neq \mathbbm{1}$, then the elements of $P$ that satisfy~\eqref{chiintertwine} are $\mathrm{id}$, and $\{g_{k}\;|\;r<k<d\}$, which counts to $1+d-r-1=d-r$. This proves the proposition.
\end{proof}	

As noted earlier,  the decomposition of \(V_{\chi}\) reduces to determining the decomposition of each \(V_{\chi}^{\mathcal{K}_n}\) for all \(n \geq 0\). 
Observe that if \(\chi\) is the trivial character, then by the preceding proposition,
\[
\dim_{\mathbb{C}}\!\left(\mathrm{Hom}_{\mathcal{K}}\!\left(V_{\chi}^{\mathcal{K}_{1}},\, V_{\chi}^{\mathcal{K}_{1}}\right)\right)=2.
\]
In contrast, if \(\chi\) is a character of \(T\) of true depth \(r \geq 0\) such that \(\chi|_{S_{0}} \neq \mathbbm{1}\), then
\[
\dim_{\mathbb{C}}\!\left(\mathrm{Hom}_{\mathcal{K}}\!\left(V_{\chi}^{\mathcal{K}_{r+1}},\, V_{\chi}^{\mathcal{K}_{r+1}}\right)\right)=1.
\]
Hence in the latter case \(V_{\chi}^{\mathcal{K}_{r+1}}\) is irreducible, whereas the following lemma describes the former case.

\begin{lemma}\label{principalseries2}
If $\chi = \mathbbm{1}$ is the trivial character, then
\[
V_{\chi}^{\mathcal{K}_{1}} \cong \mathbbm{1} \oplus \mathrm{St},
\]
where $\mathbbm{1}$ and $\mathrm{St}$ denote the inflations to $\mathcal{K}$ of the trivial and Steinberg representations $\mathbbm{1}_{q}$ and $\mathrm{St}_{q}$ of $\mathrm{U}(1,1)(\mathfrak{f})$, respectively.
\end{lemma}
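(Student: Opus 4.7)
The plan is to reduce the claim to the well-known decomposition of the principal series of the finite reductive group $\mathbb{U}(1,1)(\mathfrak{f})$ induced from the trivial character of its Borel subgroup, and then to use the endomorphism-algebra count from Proposition~\ref{dimension1principalseries} to pin down that exactly two constituents appear.

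First, I would apply Lemma~\ref{principalseriescharacterreduction6} with $r = 0$ and $d = 1$ to get
\[
V_{\mathbbm{1}}^{\mathcal{K}_{1}} \;\cong\; \mathrm{Ind}_{\mathcal{B}\mathcal{K}_{1}}^{\mathcal{K}}\mathbbm{1}.
\]
Since the inducing character is trivial on $\mathcal{K}_{1}$, and induction commutes with inflation, this induced representation is the inflation to $\mathcal{K}$ of $\mathrm{Ind}_{\mathbb{B}(\mathfrak{f})}^{\mathbb{U}(1,1)(\mathfrak{f})}\mathbbm{1}_{q}$, where $\mathbb{B}(\mathfrak{f}) \cong \mathcal{B}\mathcal{K}_{1}/\mathcal{K}_{1}$ is the Borel of the finite quotient $\mathcal{K}/\mathcal{K}_{1} \cong \mathbb{U}(1,1)(\mathfrak{f})$. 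By Lemma~\ref{dimensionofVchiKn} this representation has total degree $q+1$.

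Next, I would invoke Proposition~\ref{dimension1principalseries} in the case $r = 0$, $\chi|_{S_{0}} = \mathbbm{1}$, $d = 1$, which gives $\dim_{\mathbb{C}}\mathrm{End}_{\mathcal{K}}\bigl(V_{\mathbbm{1}}^{\mathcal{K}_{1}}\bigr) = 2$. Writing the isotypic decomposition $V_{\mathbbm{1}}^{\mathcal{K}_{1}} \cong \bigoplus_{i} n_{i}\sigma_{i}$ with the $\sigma_{i}$ pairwise non-isomorphic and irreducible, Schur's lemma gives $\sum_{i} n_{i}^{2} = 2$, which forces exactly two distinct irreducible constituents, each appearing with multiplicity one.

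Finally, I would identify these two constituents. The trivial representation $\mathbbm{1}$ embeds in $V_{\mathbbm{1}}^{\mathcal{K}_{1}}$ via the constant functions (these are $\mathcal{K}$-fixed and satisfy the $\mathcal{B}\mathcal{K}_{1}$-covariance condition since $\chi = \mathbbm{1}$). Because the total degree equals $q+1$, the complementary irreducible constituent has degree $q$, and by the standard decomposition of the parabolically-induced representation $\mathrm{Ind}_{\mathbb{B}(\mathfrak{f})}^{\mathbb{U}(1,1)(\mathfrak{f})}\mathbbm{1}_{q}$ for a finite reductive group of semisimple rank one, this complement is by definition the Steinberg representation $\mathrm{St}_{q}$. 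Inflating back to $\mathcal{K}$ yields $V_{\mathbbm{1}}^{\mathcal{K}_{1}} \cong \mathbbm{1} \oplus \mathrm{St}$. There is no real obstacle here: the substantive input is the endomorphism-algebra dimension from Proposition~\ref{dimension1principalseries}, and the rest is the standard finite-group fact combined with the compatibility of induction and inflation.
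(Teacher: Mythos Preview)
Your proof is correct and follows essentially the same route as the paper: both reduce $V_{\mathbbm{1}}^{\mathcal{K}_{1}}$ via Lemma~\ref{principalseriescharacterreduction6} and inflation to the induction $\mathrm{Ind}_{\mathbb{B}(\mathfrak{f})}^{\mathbb{U}(1,1)(\mathfrak{f})}\mathbbm{1}_{q}$ on the finite quotient, and then identify the two pieces as the trivial and Steinberg representations. Your invocation of Proposition~\ref{dimension1principalseries} to count constituents is an extra confirmation but not strictly needed, since the paper simply takes $\mathrm{Ind}_{\mathbb{B}(\mathfrak{f})}^{\mathbb{U}(1,1)(\mathfrak{f})}\mathbbm{1}_{q} \cong \mathbbm{1}_{q}\oplus \mathrm{St}_{q}$ as the defining property of $\mathrm{St}_{q}$.
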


\begin{proof}
Recall that the Steinberg representation $\mathrm{St}_{q}$ of $\mathrm{U}(1,1)(\mathfrak{f})$ is defined by the property that $\mathrm{Ind}_{B(\mathfrak{f})}^{\mathrm{U(1,1)}(\mathfrak{f})}\mathbbm{1}\cong\mathbbm{1}_{q}\oplus \mathrm{St}_{q}$, where $\mathbbm{1}_{q}$ denotes the trivial representation of $\mathrm{U(1,1)}(\mathfrak{f})$, and the degree of $\mathrm{St}_{q}$ is $q$. Let $\chi=\mathbbm{1}$ be the trivial character of $T$. We then have
$$V_{\chi}^{\mathcal{K}_{1}}\cong \mathrm{Ind}_{\mathcal{B}\mathcal{K}_{1}}^{\mathcal{K}}\mathbbm{1}\cong \mathrm{Ind}_{\mathcal{B}/ \mathcal{B}_{1}}^{\mathcal{K}/\mathcal{K}_{1}}\mathbbm{1}\cong \mathrm{Ind}_{B(\mathfrak{f})}^{\mathrm{U(1,1)}(\mathfrak{f})}\mathbbm{1}_{q}\cong\mathbbm{1}_{q}\oplus \mathrm{St}_{q} $$
where we use the identification $\mathcal{K}/\mathcal{K}_{1} \cong \mathrm{U}(1,1)(\mathfrak{f})$.
Inflating $\mathbbm{1}_{q}$ and $\mathrm{St}_{q}$ to $\mathcal{K}$ (via the projection $\mathcal{K} \to \mathcal{K}/\mathcal{K}_1$) gives representations $\mathbbm{1}$ and $\mathrm{St}$ of $\mathcal{K}$, and the lemma follows. 
\end{proof}

We now state the main theorem of this section which provides a canonical decomposition of principal series representations of $G$ upon restriction to $\mathcal{K}$.

\begin{theorem}\label{thm2}
Let $\chi$ be a character of $T$ of minimal depth $r$, and let $(\pi_{\chi}, V_{\chi})$ denote the principal series representation associated to $\chi$. Then there exists irreducible representations $\mathcal{W}_{d, \chi}$ of degree $(q^{2}-1)q^{d-1}$, for $d\geq r+1$ such that
$$\mathrm{Res}_{\mathcal{K}}\pi_{\chi}\cong V_{\chi}^{\mathcal{K}_{r+1}}\oplus\bigoplus_{d \geq r+1}\mathcal{W}_{d,\chi}$$
where $V_{\chi}^{\mathcal{K}_{1}}=\mathbbm{1}_{q}+\mathrm{St}_{q}$ if $\chi=\mathbbm{1}$, and  $V_{\chi}^{\mathcal{K}_{r+1}}$ is an irreducible representation of degree $(q+1)q^{r}$ otherwise.
\end{theorem}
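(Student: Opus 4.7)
The plan is to filter $V_{\chi}$ by the subspaces $V_{\chi}^{\mathcal{K}_{n}}$ of $\mathcal{K}_{n}$-fixed vectors, identify the lowest non-zero layer, and then peel off a single irreducible component at each successive layer using the endomorphism dimension count of Proposition~\ref{dimension1principalseries}. By the discussion in~\S\ref{representation theory}, $V_{\chi}=\bigcup_{n\geq 0}V_{\chi}^{\mathcal{K}_{n}}$ and every $V_{\chi}^{\mathcal{K}_{n}}$ is $\mathcal{K}$-stable; Lemma~\ref{branch1} kills $V_{\chi}^{\mathcal{K}_{n}}$ for $n\leq r$ when $\chi\neq \mathbbm{1}$, so the decomposition truly begins at level $r+1$.

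For the base layer I would separate the two cases. When $\chi=\mathbbm{1}$, and hence $r=0$, Lemma~\ref{principalseries2} identifies $V_{\chi}^{\mathcal{K}_{1}}$ with $\mathbbm{1}\oplus \mathrm{St}$. When $\chi\neq \mathbbm{1}$, Proposition~\ref{dimension1principalseries} applied at $d=r+1$ gives $\dim_{\mathbb{C}}\mathrm{End}_{\mathcal{K}}(V_{\chi}^{\mathcal{K}_{r+1}})=1$, so this space is $\mathcal{K}$-irreducible, and Lemma~\ref{dimensionofVchiKn} identifies its degree as $(q+1)q^{r}$.

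The inductive step is the heart of the argument. Fix $d\geq r+1$. Since $\mathcal{K}$ is compact, every finite-dimensional $\mathcal{K}$-representation is completely reducible, so I choose a $\mathcal{K}$-invariant complement and write $V_{\chi}^{\mathcal{K}_{d+1}}=V_{\chi}^{\mathcal{K}_{d}}\oplus \mathcal{W}_{d,\chi}$. Every irreducible constituent of $V_{\chi}^{\mathcal{K}_{d}}$ has depth strictly less than $d$ by Lemma~\ref{dimensionofVchiKn}, while every irreducible constituent of $\mathcal{W}_{d,\chi}$ has depth at most $d$ by the same lemma applied at level $d+1$; its depth cannot be strictly less than $d$, for otherwise it would be fixed by $\mathcal{K}_{d}$ and therefore embed into $V_{\chi}^{\mathcal{K}_{d}}$, contradicting the direct-sum decomposition. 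Hence $V_{\chi}^{\mathcal{K}_{d}}$ and $\mathcal{W}_{d,\chi}$ share no common irreducible constituent, so that $\mathrm{End}_{\mathcal{K}}(V_{\chi}^{\mathcal{K}_{d+1}})\cong \mathrm{End}_{\mathcal{K}}(V_{\chi}^{\mathcal{K}_{d}})\oplus \mathrm{End}_{\mathcal{K}}(\mathcal{W}_{d,\chi})$, and Proposition~\ref{dimension1principalseries} forces $\dim_{\mathbb{C}}\mathrm{End}_{\mathcal{K}}(\mathcal{W}_{d,\chi})=1$ in both the trivial case (the difference being $(d+2)-(d+1)$) and the non-trivial case (the difference being $(d+1-r)-(d-r)$). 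Thus $\mathcal{W}_{d,\chi}$ is irreducible, of degree $(q+1)q^{d}-(q+1)q^{d-1}=(q^{2}-1)q^{d-1}$ by Lemma~\ref{dimensionofVchiKn}. Telescoping the layers and passing to the direct limit gives the claimed decomposition.

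The principal technical obstacle, namely the endomorphism dimension count, has already been settled in Proposition~\ref{dimension1principalseries} via the Mackey--Frobenius double-coset analysis; what remains here is essentially depth bookkeeping and the careful invocation of complete reducibility. The decomposition is automatically multiplicity-free since $V_{\chi}^{\mathcal{K}_{r+1}}$ has depth at most $r$ whereas $\mathcal{W}_{d,\chi}$ has depth exactly $d\geq r+1$, and distinct $\mathcal{W}_{d,\chi}$'s have pairwise distinct degrees.
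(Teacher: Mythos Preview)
Your proposal is correct and follows essentially the same route as the paper: filter by $V_{\chi}^{\mathcal{K}_n}$, handle the base layer via Lemma~\ref{principalseries2} or Proposition~\ref{dimension1principalseries}, split off a complement $\mathcal{W}_{d,\chi}$ at each step by complete reducibility, and use the jump of $1$ in the endomorphism dimension from Proposition~\ref{dimension1principalseries} to force irreducibility. The only cosmetic difference is the order of deductions in the inductive step: you first argue directly that every constituent of $\mathcal{W}_{d,\chi}$ has depth exactly $d$ (via the $\mathcal{K}_d$-fixed-vector observation), whence the cross terms vanish and $\mathrm{End}_{\mathcal{K}}$ splits; the paper instead expands $\dim\mathrm{End}_{\mathcal{K}}(V_{\chi}^{\mathcal{K}_{d+1}})$ with the cross terms present and lets the arithmetic (difference equal to $1$, last summand at least $1$) simultaneously kill the cross terms and yield irreducibility, deducing the depth statement afterwards.
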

\begin{proof}
If $\chi=\mathbbm{1}$, then  $V_{\chi}^{\mathcal{K}_{1}}=\mathbbm{1}_{q}+\mathrm{St}_{q}$ by Lemma~\ref{principalseries2}. If $\chi\neq \mathbbm{1}$, then from Lemma~\ref{dimension1principalseries} we have that
$\mathrm{dim}_{\mathbb{C}}(\mathrm{Hom}_{\mathcal{K}^{r+1}}(V_{\chi}^{\mathcal{K}_{r+1}}, V_{\chi}^{\mathcal{K}_{r+1}}))=1$, hence $V_{\chi}^{\mathcal{K}_{r+1}}$ is irreducible. Since $\chi$ is one- dimensional the degree is given by the index $[\mathcal{K}\colon \mathcal{B}\mathcal{K}_{r+1}]=(q+1)q^{r}$ by Lemma~\ref{dimensionofVchiKn}. 

Now let $d\geq r+1$. Since $V_{\chi}^{\mathcal{K}_{d}}\subsetneq V_{\chi}^{\mathcal{K}_{d+1}}$, by Maschke's theorem there exists a representation $\mathcal{W}_{d, \chi}$ of $\mathcal{K}$ such that $V_{\chi}^{\mathcal{K}_{d+1}}= V_{\chi}^{\mathcal{K}_{d}}\oplus \mathcal{W}_{d, \chi}$. We therefore have
\begin{align*}
	\mathrm{dim}_{\mathbb{C}}(\mathrm{Hom}_{\mathcal{K}}(V_{\chi}^{\mathcal{K}_{d+1}}, V_{\chi}^{\mathcal{K}_{d+1}}))=&\mathrm{dim}_{\mathbb{C}}(\mathrm{Hom}_{\mathcal{K}}(V_{\chi}^{\mathcal{K}_{d}}, V_{\chi}^{\mathcal{K}_{d}}))+2\mathrm{dim}_{\mathbb{C}}(\mathrm{Hom}_{\mathcal{K}^{n}}(V_{\chi}^{\mathcal{K}_{d}}, \mathcal{W}_{d, \chi}))\\
	&+\mathrm{dim}_{\mathbb{C}}(\mathrm{Hom}_{\mathcal{K}}(\mathcal{W}_{d, \chi}, \mathcal{W}_{d, \chi})).
\end{align*}
By Lemma~\ref{dimension1principalseries} the first term is $d+1-r$, and  the second term is $d-r$. Since the last term is at least one, it follows that the cross terms are zero and $\mathcal{W}_{d, \chi}$ is irreducible. Since the cross terms are zero, it follows that $\mathcal{W}_{d, \chi}$ has zero $\mathcal{K}_{d}$ invariants, and hence it has depth $d$.

We now prove by induction on $d$ that for all $d\geq r+1$, 
$V_{\chi}^{\mathcal{K}_d}=\bigoplus_{i=r}^{d-1}\mathcal{W}_{i, \chi}$. For the base case, we denote $V_{\chi}^{\mathcal{K}_{r+1}}$ by $\mathcal{W}_{r, \chi}$. Let us assume that the result holds for some $d\geq r+1$, then we want to show that it holds for $d+1$. We have already shown that
$V_{\chi}^{\mathcal{K}_{d+1}}=V_{\chi}^{\mathcal{K}_{d}}\oplus\mathcal{W}_{d, \chi}$. By the induction hypothesis we have $V_{\chi}^{\mathcal{K}_{d}}=\bigoplus_{i=r}^{d-1}\mathcal{W}_{i, \chi}$; hence the claim. 
Finally, we have
$$V_{\chi}=\bigcup_{n\geq 0}V_{\chi}^{\mathcal{K}_n}=\bigcup_{n\geq r+1}\left(\bigoplus_{i=r}^{n-1}\mathcal{W}_{i, \chi}\right)=\bigoplus_{d\geq r}\mathcal{W}_{d, \chi}.$$

The degree of $\mathcal{W}_{d, \chi}$ is given by $\mathrm{dim}_{\mathbb{C}}(V_{\chi}^{\mathcal{K}_{d+1}})-\mathrm{dim}_{\mathbb{C}}(V_{\chi}^{\mathcal{K}_{d}})=q^{d+1}+q^{d}-q^{d}-q^{d-1}=(q^2-1)q^{d-1}$.

\end{proof}

Since the representations \( \mathcal{W}_{d,\chi} \) have distinct degrees for distinct values of \( d \), we obtain the following immediate corollary.

\begin{corollary}
	The decomposition in Theorem~\ref{thm2} is multiplicity-free.
\end{corollary}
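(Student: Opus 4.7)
The plan is to prove multiplicity-freeness by verifying that all irreducible constituents appearing in the decomposition of $\mathrm{Res}_{\mathcal{K}}\pi_{\chi}$ given by Theorem~\ref{thm2} have pairwise distinct dimensions. Since any two isomorphic irreducible representations must share a dimension, distinctness of all degrees forces the decomposition to be multiplicity-free. This reduces the corollary to an arithmetic comparison among the degrees recorded in Theorem~\ref{thm2}.

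For $\chi \neq \mathbbm{1}$ of minimal depth $r$, the constituents are $V_{\chi}^{\mathcal{K}_{r+1}}$ of degree $(q+1)q^r$ together with $\mathcal{W}_{d,\chi}$ of degree $(q-1)(q+1)q^{d-1}$ for each $d \geq r+1$. The map $d \mapsto (q^2-1)q^{d-1}$ is strictly increasing, so the tower $\{\mathcal{W}_{d,\chi}\}_{d \geq r+1}$ is already pairwise non-isomorphic on dimensional grounds. To compare $V_{\chi}^{\mathcal{K}_{r+1}}$ with the tower, I would note that the equation $(q+1)q^r = (q-1)(q+1)q^{d-1}$ simplifies to $q^r = (q-1)q^{d-1}$; since $\gcd(q, q-1) = 1$, this forces $q-1 \mid q^r$, and hence $q - 1 = 1$. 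This possibility is ruled out by the standing hypothesis $p \neq 2$, under which $q$ is odd and $q - 1 \geq 2$. When $\chi = \mathbbm{1}$, Lemma~\ref{principalseries2} contributes the two additional constituents $\mathbbm{1}_q$ and $\mathrm{St}_q$ of degrees $1$ and $q$; these are distinct from each other and both strictly smaller than $(q^2-1)q^{d-1} \geq q^2 - 1$ for every $d \geq 1$, again using $q \geq 3$.

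The only step requiring any genuine care is the exclusion of $q - 1 \mid q^r$, which is precisely where the odd residual characteristic hypothesis is invoked; without it, for $q = 2$ and $d = r+1$ the degrees of $V_{\chi}^{\mathcal{K}_{r+1}}$ and $\mathcal{W}_{r+1,\chi}$ would coincide. Everything else amounts to reading off the degree formula from Theorem~\ref{thm2} and comparing integer powers of $q$, so the proof will be a short two- or three-line verification after recording the list of degrees.
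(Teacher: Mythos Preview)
Your proof is correct and takes the same degree-comparison approach as the paper, which records only that the $\mathcal{W}_{d,\chi}$ have pairwise distinct degrees $(q^{2}-1)q^{d-1}$. You are more thorough in also verifying that no $\mathcal{W}_{d,\chi}$ can coincide with a constituent of $V_{\chi}^{\mathcal{K}_{r+1}}$, and you correctly identify this as the place where $p\neq 2$ enters; note, though, that this last comparison can be handled without any hypothesis on $q$ by using depth rather than degree: the proof of Theorem~\ref{thm2} shows each $\mathcal{W}_{d,\chi}$ has depth exactly $d\geq r+1$, whereas every irreducible constituent of $V_{\chi}^{\mathcal{K}_{r+1}}$ has depth at most $r$.
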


\section{Positive depth representations of $\mathcal{K}$}\label{chapter3}
In Section~\ref{a canonical decomposition}, we provided a canonical decomposition of $\pi_{\chi}$ upon restriction to $\mathcal{K}$. However, the components $\mathcal{W}_{d,\chi}$ were obtained as \emph{quotients} of certain induced representations. In this section, we take a brief detour to construct explicit irreducible representations of $\mathcal{K}$, and in the subsequent section we show that the components $\mathcal{W}_{d,\chi}$ are in fact isomorphic to the representations constructed here. This identification yields a more concrete and explicit description of the decomposition.

Shalika, in his thesis~\cite{Sha2004}, constructed all irreducible representations of $\mathrm{SL}(2,\mathcal{O}_{F})$ for $p \neq 2$. Here, we extend his method to construct certain positive-depth irreducible representations of the compact open subgroup $\mathcal{K}$ that appear as non-types in the branching rules to follow. Since $\mathbb{SU}(1,1)(\mathcal{O}_{F})$ is isomorphic to $\mathrm{SL}(2,\mathcal{O}_{F})$, the construction proceeds analogously, although it has not been explicitly carried out in the literature. In addition, we construct representations associated with nilpotent orbits of elements in the Lie algebra of $G$, using the same approach. These representations play a crucial role in describing the branching rules in a neighborhood of the identity element.

\subsection{Key concepts involved in the construction}

We begin by recalling a foundational result from Clifford theory, as given in \cite[Lemmas~4.1.1 and~4.1.3]{Sha2004}, which serves as a key tool for extending irreducible representations from a normal subgroup to the whole group.

\begin{theorem}[Clifford Theory {\cite[Lemmas~4.1.1 and~4.1.3]{Sha2004}}]\label{Cliff1}
	Let $K$ be a finite group, and let $N \triangleleft K$ be a normal subgroup.
	\begin{enumerate}
		\item[(a)]\label{Cliff1a} Let $\phi$ be an irreducible representation of $N$, and define  
		\[
		N_{K}(\phi) = \{k \in K \mid \phi^{k} \cong \phi \}
		\]
		to be its normalizer in $K$. If $\sigma_{i}$ is an irreducible representation of $N_{K}(\phi)$ such that 
		$\phi$ occurs as a subrepresentation of $\sigma_i\!\mid_{N}$, then the induced representation 
		$\mathrm{Ind}_{N_{K}(\phi)}^{K}\sigma_i$ is irreducible. Moreover,  
		\[
		\mathrm{Ind}_{N_{K}(\phi)}^{K}\sigma_i \;\cong\; \mathrm{Ind}_{N_{K}(\phi)}^{K}\sigma_j 
		\quad \text{if and only if} \quad
		\sigma_i \cong \sigma_j.
		\]
		\item[(b)]\label{Cliff1b} Suppose $K = AN$ with $A \leq K$ a subgroup and $N \triangleleft K$.  
		Let $\Psi$ be a one-dimensional representation of $N$ that is $K$-invariant, i.e.\ $N_{K}(\Psi)=K$. Then
		\begin{enumerate}
			\item The irreducible representations $\sigma$ of $K$ satisfying $\sigma\!\mid_{N} \supset \Psi$ are in one-to-one correspondence with the irreducible representations $\zeta$ of $A$ satisfying $\zeta\!\mid_{A \cap N} \supset \Psi\!\mid_{A \cap N}$, 
			where $\sigma(an)=\zeta(a)\Psi(n)$	for all $a\in A$ and $n\in N$.
			
			\item For each $\sigma$ as in $(a)$, the multiplicity of $\sigma$ in $\mathrm{Ind}_{N}^{K}\Psi$ is equal to the degree of $\sigma$.
		\end{enumerate}
	\end{enumerate}
\end{theorem}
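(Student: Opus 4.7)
The plan is to treat the two parts separately: part (a) reduces to a Mackey irreducibility argument combined with Clifford's theorem on restriction to a normal subgroup, while part (b) proceeds by an explicit construction of $\sigma$ from the pair $(\zeta,\Psi)$ and verification of the required compatibilities.

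For part (a), I would compute the self-intertwining space of $\mathrm{Ind}_{N_{K}(\phi)}^{K}\sigma_{i}$ using Mackey's formula:
\[
\mathrm{Hom}_{K}\bigl(\mathrm{Ind}_{N_{K}(\phi)}^{K}\sigma_{i},\,\mathrm{Ind}_{N_{K}(\phi)}^{K}\sigma_{i}\bigr)
\;\cong\;
\bigoplus_{g\in N_{K}(\phi)\backslash K/N_{K}(\phi)}
\mathrm{Hom}_{N_{K}(\phi)\cap N_{K}(\phi)^{g}}(\sigma_{i},\sigma_{i}^{g}).
\]
The $g=1$ term is one-dimensional by Schur. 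For $g\notin N_{K}(\phi)$, I would argue that the restriction $\sigma_{i}|_{N}$ is $\phi$-isotypic (by Clifford, since $N\subset N_{K}(\phi)$ stabilizes $\phi$), whereas $\sigma_{i}^{g}|_{N}$ is $\phi^{g}$-isotypic with $\phi^{g}\not\cong\phi$. Hence these restrictions are disjoint, forcing the corresponding Hom space (and therefore the full intertwining) to vanish — proving irreducibility. The uniqueness statement follows from the same disjointness: any nonzero map $\mathrm{Ind}\,\sigma_{i}\to\mathrm{Ind}\,\sigma_{j}$ gives, by Frobenius reciprocity, an $N_{K}(\phi)$-map $\sigma_{i}\to\mathrm{Res}\,\mathrm{Ind}\,\sigma_{j}$, which by the Mackey decomposition lives in the $g=1$ summand and hence yields $\sigma_{i}\cong\sigma_{j}$.

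For part (b), I would attempt to define $\sigma(an)=\zeta(a)\Psi(n)$. The delicate point, and the step I expect to be the main obstacle, is well-definedness: if $an=a'n'$, then $a'^{-1}a = n'n^{-1}\in A\cap N$, and we need $\zeta(h)=\Psi(h)\cdot\mathrm{id}$ for all $h\in A\cap N$. To secure this, I would apply Clifford's theorem to the pair $(A,A\cap N)$: since $A\cap N\triangleleft A$ and $\Psi$ is $K$-invariant (in particular $A$-invariant), every $\zeta|_{A\cap N}$-constituent is a conjugate of $\Psi|_{A\cap N}$ under $A$, hence equal to $\Psi|_{A\cap N}$. Therefore if $\zeta|_{A\cap N}\supset\Psi|_{A\cap N}$, then $\zeta|_{A\cap N}$ is scalar-valued, acting by $\Psi|_{A\cap N}$, exactly as required. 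Once well-definedness is granted, the homomorphism property follows directly from the $K$-invariance identity $\Psi(a^{-1}na)=\Psi(n)$, irreducibility of $\sigma$ is equivalent to irreducibility of $\zeta$ (since $\Psi(n)$ acts by scalars, the $\sigma$-stable and $\zeta$-stable subspaces coincide), and the inverse of the correspondence sends $\sigma\mapsto\zeta:=\sigma|_{A}$.

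Finally, for the multiplicity formula in part (b)(ii), I would apply Frobenius reciprocity:
\[
\mathrm{Hom}_{K}(\mathrm{Ind}_{N}^{K}\Psi,\sigma)\;\cong\;\mathrm{Hom}_{N}(\Psi,\sigma|_{N}).
\]
Since $\Psi$ is $K$-invariant and one-dimensional, Clifford's theorem gives $\sigma|_{N}\cong e\cdot\Psi$ with $e=\dim\sigma$, so the Hom space has dimension $\dim\sigma$, which is exactly the asserted multiplicity.
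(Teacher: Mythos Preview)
Your proof is correct and follows the standard Clifford-theoretic route. Note, however, that the paper does not supply its own proof of this theorem: it is quoted as a background result from Shalika~\cite[Lemmas~4.1.1 and~4.1.3]{Sha2004} and used as a black box. So there is no paper-proof to compare against; your argument is simply a correct reconstruction of the classical one.

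One small remark on part~(b): your well-definedness step is right, but it is worth making explicit that $A\cap N$ is normal in $A$ precisely because $N\triangleleft K$, so that Clifford applies to the pair $(A, A\cap N)$. You do use this implicitly, but it is the lynchpin that makes the ``$\zeta|_{A\cap N}$ acts by the scalar $\Psi|_{A\cap N}$'' step go through. Likewise, in the homomorphism check you should write out $(an)(a'n')=(aa')\bigl((a')^{-1}na'\bigr)n'$ to see that it is exactly the identity $\Psi\bigl((a')^{-1}na'\bigr)=\Psi(n)$ from $K$-invariance that is being invoked. These are not gaps, just places where one extra line would make the write-up self-contained.
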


\subsection{Positive-depth representations of $\mathcal{K}$}
In this section, we construct certain positive-depth representations of $\mathcal{K}$  using Theorem~\ref{Cliff1}.

Recall that in~\S\ref{the field} we fixed an additive character $\psi$ of $E$ that is trivial on $\mathfrak{p}_{E}$ but nontrivial on $\mathcal{O}_{E}$.
 The Moy--Prasad isomorphism for $\mathcal{K}$ is given by
\begin{align*}
	\mathcal{K}_{\frac{d}{2}+}/\mathcal{K}_{d+}
	&\cong\mathfrak{k}_{ \frac{d}{2}+}/\mathfrak{k}_{d+}\\
	k+\mathcal{K}_{d+}&\mapsto (k-I)+\mathfrak{k}_{d+}
\end{align*}	
Let $X\in \mathfrak{k}_{-d}$. Then the function 
\begin{equation}\label{characterdefinition}
	k \mapsto \Psi_X(k) = \psi(\mathrm{Tr}(X(k-I)))
\end{equation} 
defines a character of the group $\mathcal{K}_{\frac{d}{2}+}$ of depth $d$, and all such character arise in this way. We focus on elements $X\in\mathfrak{k}_{-d}$ of the form
\begin{equation}\label{element X}
	X=X(z)+\widetilde{X}(u, v)=\begin{pmatrix}z\sqrt{\epsilon}& 0\\ 0&z \sqrt{\epsilon}\end{pmatrix}+\begin{pmatrix}0&u\sqrt{\epsilon}\\
	v\sqrt{\epsilon}&0\end{pmatrix},\end{equation}
where $z, u, v\in F$ such that $\nu(z)\geq-d$ and $\nu(v)>\nu(u)=-d$. Note that $z, v$ and $u$ are uniquely determined by the character $\Psi_{X}$ only modulo $\mathfrak{p}_{F}^{-\lceil\frac{d}{2}\rceil}$. Let $C_{\mathcal{K}}\left(X+\mathfrak{k}_{-\frac{d}{2}}\right)$ be the centralizer of the coset $X+\mathfrak{k}_{-\frac{d}{2}}$ in $\mathcal{K}$.

Thus, we are precisely in the setting of Theorem~\ref{Cliff1}(a). We have a normal subgroup $\mathcal{K}_{\frac{d}{2}+}$ of $\mathcal{K}$ and an irreducible representation of it given by $\Psi_X$.  
To produce an irreducible representation of $\mathcal{K}$, we need an irreducible representation of the normalizer of $\Psi_X$ in $\mathcal{K}$ whose restriction to $\mathcal{K}_{\frac{d}{2}+}$ contains $\Psi_X$. Therefore, we now compute the normalizer of $\Psi_X$ in $\mathcal{K}$.

\begin{lemma}\label{normalizerequalscentralizerofacoset}
	We have $N_{\mathcal{K}}(\Psi_{X})=C_{\mathcal{K}}\left(X+\mathfrak{k}_{-\frac{d}{2}}\right)$.
\end{lemma}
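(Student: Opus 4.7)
The plan is to reduce the normalization condition ``$g$ normalizes $\Psi_X$'' to an algebraic condition on $\mathrm{Ad}(g)X$. This rests on two observations: conjugation of the character by $g$ corresponds to the adjoint action on the parameter $X$; and the assignment $X \mapsto \Psi_X$ descends to a bijection between $\mathfrak{k}_{-d}/\mathfrak{k}_{-d/2}$ and the character group of $\mathcal{K}_{d/2+}/\mathcal{K}_{d+}$. Both observations are standard for Moy--Prasad data, but it is worth carrying them out explicitly here.

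For the first observation, I would compute directly: for $g \in \mathcal{K}$ and $k \in \mathcal{K}_{d/2+}$,
\[
\Psi_X^g(k) \;=\; \Psi_X(g^{-1}kg) \;=\; \psi(\mathrm{Tr}(X g^{-1}(k-I)g)) \;=\; \psi(\mathrm{Tr}(gXg^{-1}(k-I))) \;=\; \Psi_{gXg^{-1}}(k),
\]
using cyclicity of the trace. Since $\mathcal{K} = \mathbb{G}(\mathcal{O}_E)$ acts by conjugation with entries of both $g$ and $g^{-1}$ in $\mathcal{O}_E$, the adjoint action preserves every filtration subspace $\mathfrak{k}_r$; in particular $gXg^{-1} \in \mathfrak{k}_{-d}$, so $\Psi_{gXg^{-1}}$ is again a character of the same family.

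For the second observation, I would show that for $X, Y \in \mathfrak{k}_{-d}$, $\Psi_X = \Psi_Y$ if and only if $X - Y \in \mathfrak{k}_{-d/2}$. Writing $Z := X - Y$, the ``if'' direction uses the identification $\mathfrak{k}_{-d/2} = \mathfrak{k}^*_{-d/2}$ via the trace pairing (see~\eqref{filtrationofdual}): the defining property gives $\mathrm{Tr}(ZW) \in \mathfrak{p}_E$ for all $W \in \mathfrak{k}_{d/2+}$, and applying this with $W = k - I$ for $k \in \mathcal{K}_{d/2+}$, together with the fact that $\ker \psi = \mathfrak{p}_E$, yields $\Psi_Z \equiv 1$. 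The ``only if'' direction uses the Moy--Prasad isomorphism $\mathcal{K}_{d/2+}/\mathcal{K}_{d+} \cong \mathfrak{k}_{d/2+}/\mathfrak{k}_{d+}$ to convert triviality of $\Psi_Z$ into the pairing condition $\mathrm{Tr}(ZW) \in \mathfrak{p}_E$ for all $W \in \mathfrak{k}_{d/2+}$, hence $Z \in \mathfrak{k}^*_{-d/2}$; the corresponding condition on $W \in \mathfrak{k}_{d+}$ is automatic since $Z \in \mathfrak{k}_{-d}$.

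Assembling the two observations, $g \in N_{\mathcal{K}}(\Psi_X)$ iff $\Psi_{gXg^{-1}} = \Psi_X$ iff $gXg^{-1} \in X + \mathfrak{k}_{-d/2}$, iff $\mathrm{Ad}(g)(X + \mathfrak{k}_{-d/2}) = X + \mathfrak{k}_{-d/2}$, which is exactly $g \in C_{\mathcal{K}}(X + \mathfrak{k}_{-d/2})$. The only delicate point is the ``only if'' half of the second observation, which relies on the nondegeneracy of the trace pairing between $\mathfrak{k}_{-d}/\mathfrak{k}_{-d/2}$ and $\mathfrak{k}_{d/2+}/\mathfrak{k}_{d+}$; in our unramified setup this is essentially built into the definition of the dual filtration $\mathfrak{k}^*_{-r}$ via the explicit matrix description of $\mathfrak{k}_r$ given earlier.
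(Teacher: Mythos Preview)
Your proof is correct and follows essentially the same approach as the paper's: both reduce the normalizer condition to $gXg^{-1}-X\in\mathfrak{k}_{-d/2}$ via trace cyclicity and the definition of the dual filtration~\eqref{filtrationofdual}. The paper carries this out as a single chain of equalities, while you separate it into two labeled observations (conjugation $\leftrightarrow$ adjoint action; injectivity of $X\mapsto\Psi_X$ modulo $\mathfrak{k}_{-d/2}$), but the underlying argument is identical.
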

\begin{proof}
	Since $\Psi$ is a character, the normalizer $N_{\mathcal{K}}(\Psi_{X})$ equals
	\begin{align*}
		N_{\mathcal{K}}(\Psi_X)&=\left\{k\in\mathcal{K}\;|\; \Psi_{X}^{k}=\Psi_{X}\right\}\\
		&=\{k\in\mathcal{K}\;|\;\Psi_{X}(k^{-1}gk)=\Psi_{X}(g)\;\forall g\in\mathcal{K}_{\frac{d}{2}+}\}\\
		&=\{k\in \mathcal{K}\;|\; \psi(\mathrm{Tr}(X(k^{-1}gk-I)))=\psi(\mathrm{Tr}(X(g-I)))\;\forall g\in\mathcal{K}_{\frac{d}{2}+}\}\\
		&=\{k\in \mathcal{K}\;|\; \psi(\mathrm{Tr}((kXk^{-1}-X)(g-I)))=1\;\forall g\in\mathcal{K}_{\frac{d}{2}+}\}.
	\end{align*}
	Note that $g-I\in\mathfrak{k}_{ \frac{d}{2}+}/\mathfrak{k}_{d+}$, therefore we may write 
	$$N_{\mathcal{K}}(\Psi_X)=\{k\in \mathcal{K}\;|\; \psi(\mathrm{Tr}((kXk^{-1}-X)Y))=1\;\forall Y\in\mathfrak{k}_{ \frac{d}{2}+}\}.$$
	Hence by the definition of filtration on the dual~\eqref{filtrationofdual} we have
	\begin{align*}
		N(\Psi_X)&=\{k\in \mathcal{K}\;|\; kXk^{-1}-X\in \mathfrak{k}_{-\frac{d}{2}}\}
	\end{align*}	
	which is the centralizer of the coset $X+\mathfrak{k}_{-\frac{d}{2}}$ in $\mathcal{K}$, as was required.
\end{proof}
Thus to compute the centralizer of $\Psi_{X}$, we need to compute $C_{\mathcal{K}}(X+\mathfrak{k}_{-\frac{d}{2}})$. A direct computation yields that the centralizer $T(X)$ of $X$ in $\mathcal{K}$ is given by
\begin{equation}\label{definitionofTofX}
	T(X) = \left\{   \begin{pmatrix} a & b \\ bu^{-1}v & a \end{pmatrix}\;\middle\vert\; a,b\in \mathcal{O}_{E}, a\overline{a}+b\overline{b}u^{-1}v =1, \overline{a}b\in\sqrt{\epsilon}\mathcal{O}_{F} \right\}.
\end{equation}

\begin{proposition}\label{normalizerofpsiX}
	Let \(d \in \mathbb{Z}_{>0}\) and let \(z, u, v \in F\) such that \(\nu(z) \geq -d\) and \(\nu(v) > \nu(u) = -d\). Consider the element
	\[
	X = X(z) + \widetilde{X}(u,v) \in \mathfrak{k}_{-d}.
	\]
	Then the normalizer of \(\Psi_{X}\) in \(\mathcal{K}\) is \(T(X)\,\mathcal{K}_{\frac{d}{2}}\).
\end{proposition}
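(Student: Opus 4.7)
The plan is to combine Lemma~\ref{normalizerequalscentralizerofacoset}, which identifies $N_{\mathcal{K}}(\Psi_{X})$ with the coset centralizer $C_{\mathcal{K}}(X+\mathfrak{k}_{-d/2})$, with a direct matrix computation. The target equality then becomes
\[
C_{\mathcal{K}}(X+\mathfrak{k}_{-d/2}) \;=\; T(X)\,\mathcal{K}_{d/2}.
\]
The inclusion $T(X)\mathcal{K}_{d/2} \subseteq C_{\mathcal{K}}(X+\mathfrak{k}_{-d/2})$ is the easy direction: elements of $T(X) = C_{\mathcal{K}}(X)$ fix $X$ under conjugation, hence fix the coset; for $k=I+Y \in \mathcal{K}_{d/2}$ with $Y \in \mathfrak{k}_{d/2}$, a geometric-series expansion gives $kXk^{-1}-X = [Y,X] + \sum_{j \ge 2}(-Y)^{j}X + \cdots$, and since $[\mathfrak{k}_{d/2},\mathfrak{k}_{-d}]\subseteq \mathfrak{k}_{-d/2}$ and every higher-order term lies in $\mathfrak{k}_{0}\subseteq \mathfrak{k}_{-d/2}$, the coset is preserved.

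For the reverse inclusion, write $k=\begin{pmatrix}a&b\\ c&e\end{pmatrix}\in \mathcal{K}$ and exploit the fact that the scalar piece $X(z)$ of $X$ is central, so the commutator only sees $\widetilde{X}(u,v)$. A direct calculation gives
\[
kX-Xk \;=\; \sqrt{\epsilon}\begin{pmatrix} bv-cu & (a-e)u \\ (e-a)v & -(bv-cu)\end{pmatrix}.
\]
Requiring $kX-Xk \in \mathfrak{k}_{-d/2}$ and using the hypotheses $\nu(u)=-d$ and $\nu(v)>-d$ forces
\[
\nu(a-e)\;\ge\;\lceil d/2\rceil, \qquad \nu\!\bigl(c-b vu^{-1}\bigr)\;\ge\;\lceil d/2\rceil.
\]
These two congruences say that $k$, modulo $\mathcal{K}_{d/2}$, already has the shape of an element of $T(X)$ as described in~\eqref{definitionofTofX}.

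The last step is to realize this shape as an honest element of $T(X)$. Set $t=\begin{pmatrix}a'&b\\ bvu^{-1}&a'\end{pmatrix}$ with $a'\equiv a \pmod{\mathfrak{p}_{E}^{\lceil d/2\rceil}}$ chosen so that the defining identities $\overline{a'}a'+\overline{b}b\,vu^{-1}=1$ and $\overline{a'}b\in \sqrt{\epsilon}\,\mathcal{O}_{F}$ hold exactly. Combining the $G$-relations $\overline{a}e+\overline{c}b=1$ and $\overline{b}e\in \sqrt{\epsilon}F$ coming from $k\in\mathcal{K}\subset G$ with the Step~3 congruences shows that $(a,b)$ itself satisfies these identities modulo $\mathfrak{p}_{E}^{\lceil d/2\rceil}$. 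The surjectivity of the filtered norm map $\mathrm{N}_{E/F}\colon 1+\mathfrak{p}_{E}^{\lceil d/2\rceil}\twoheadrightarrow 1+\mathfrak{p}_{F}^{\lceil d/2\rceil}$ recalled in~\S\ref{the field} then produces the required adjustment $a\mapsto a'$ by a Hensel-type argument. Once $t\in T(X)$ is constructed in this way, the product $t^{-1}k$ has diagonal entries in $1+\mathfrak{p}_{E}^{\lceil d/2\rceil}$ and off-diagonal entries in $\mathfrak{p}_{E}^{\lceil d/2\rceil}$, hence lies in $\mathcal{K}_{d/2}$, giving $k\in T(X)\mathcal{K}_{d/2}$.

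The main obstacle is this last step: one must simultaneously solve the norm equation $\mathrm{N}_{E/F}(a')=1-\overline{b}b\,vu^{-1}$ and the $\sqrt{\epsilon}$-trace condition $\overline{a'}b\in \sqrt{\epsilon}\mathcal{O}_{F}$ without disturbing the congruence $a'\equiv a \pmod{\mathfrak{p}_{E}^{\lceil d/2\rceil}}$. The valuation bookkeeping is delicate, particularly when $d$ is odd and the ceiling $\lceil d/2\rceil$ differs from $d/2$; handling the two filtration cases uniformly and verifying that the Hensel-type perturbation lands in the correct coset is where the technical care is concentrated.
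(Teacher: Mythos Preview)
Your strategy matches the paper's: reduce via Lemma~\ref{normalizerequalscentralizerofacoset} to the coset centralizer $C_{\mathcal{K}}(X+\mathfrak{k}_{-d/2})$, drop the central summand $X(z)$, compute the commutator congruences $a\equiv e$ and $c\equiv bvu^{-1}$ modulo $\mathfrak{p}_E^{\lceil d/2\rceil}$, and then lift via a Hensel-type argument to land in $T(X)\mathcal{K}_{d/2}$. The paper inserts one cosmetic normalization you omit---scaling $\widetilde{X}(u,v)$ by $\varpi^{d}$ to put $u$ at valuation $0$ before invoking the key Lemma~\ref{centralizerofXuv1}---but this only tidies the bookkeeping.

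The one substantive divergence is in the Hensel step. You propose to keep $b$ fixed and adjust only $a\mapsto a'$ so that both $\mathrm{N}_{E/F}(a')=1-\mathrm{N}_{E/F}(b)\,vu^{-1}$ and $\overline{a'}b\in\sqrt{\epsilon}\,\mathcal{O}_F$ hold exactly, appealing to surjectivity of the filtered norm. But norm surjectivity controls only the first of these two $F$-valued constraints; the linearized $2\times 2$ system in the $F$-components of $a'-a$ has determinant proportional to $a_0b_1-a_1b_0$, which can degenerate (for instance when $b\in\mathfrak{p}_E$ so that $b$ is nearly an $F$-multiple of the unit $a$). The paper instead perturbs \emph{both} $a$ and $b$ (see Appendix~\ref{section1ofappendix}): at each inductive step this yields three $\mathfrak{f}$-linear equations in four unknowns, and the $2\times 2$ block governing the $b$-perturbation is always nondegenerate because $\mathrm{N}_{E/F}(a)\in\mathcal{O}_F^{\times}$. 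Allowing $b$ to move is exactly what makes the lift go through uniformly; restricting the perturbation to $a$ alone does not obviously suffice, and your invocation of norm surjectivity does not address the second constraint.
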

To prove the proposition, we first establish the following lemma.
\begin{lemma}\label{centralizerofXuv1}
	Let $X=\widetilde{X}(u,v)$ with $\val(u)=0$ and $\val(v)>0$. Let $s>0$.
	Let $k=\mat{a&b\\c&d}\in \mathcal{K}$. Then $k^{-1}Xk \in X+\mathfrak{k}_{s}$ if and only if $k=cg$ for some $c\in T(X)$ and $g\in \mathcal{K}_{s}$.
\end{lemma}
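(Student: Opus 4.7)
My plan is to prove the two implications separately, with the backward direction being a short calculation and the forward direction requiring a Hensel-type lifting to construct the desired centralizer element.

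For the backward direction, suppose $k = cg$ with $c \in T(X)$ and $g \in \mathcal{K}_{s}$. Since $c$ centralizes $X$, one has $k^{-1}Xk - X = g^{-1}Xg - X$. Writing $g = I + Y'$ with $Y' \in M_{2}(\mathfrak{p}_{E}^{\lceil s \rceil})$, a direct computation gives $g^{-1}Xg - X = g^{-1}(XY' - Y'X)$. Since $X$ has entries in $\mathcal{O}_{E}$ (as $\val(u)=0$ and $\val(v)>0$) and $Y'$ has entries in $\mathfrak{p}_{E}^{\lceil s \rceil}$, the bracket $[X, Y']$ has entries in $\mathfrak{p}_{E}^{\lceil s \rceil}$; multiplication by $g^{-1} \in M_{2}(\mathcal{O}_{E})$ preserves this, and since $\mathrm{Ad}(\mathcal{K})$ preserves $\mathfrak{k}$, we conclude $g^{-1}Xg - X \in \mathfrak{k} \cap M_{2}(\mathfrak{p}_{E}^{\lceil s \rceil}) = \mathfrak{k}_{s}$.

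For the forward direction, write $k = \mat{a & b \\ c & d}$ and set $\Delta := \det k \in E^{1}$. A straightforward calculation yields
\begin{equation*}
k^{-1}Xk - X \;=\; \frac{\sqrt{\epsilon}}{\Delta}\mat{cdu - abv & (d^{2}-\Delta)u - b^{2}v \\ (a^{2}-\Delta)v - c^{2}u & abv-cdu}.
\end{equation*}
Since $\sqrt{\epsilon}/\Delta \in \mathcal{O}_{E}^{\times}$, the hypothesis $k^{-1}Xk - X \in \mathfrak{k}_{s}$ is equivalent to each entry of the matrix above lying in $\mathfrak{p}_{E}^{\lceil s \rceil}$. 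This immediately rules out the opposite case of Lemma~\ref{Kproperty}: if $b, c \in \mathcal{O}_{E}^{\times}$ and $a, d \in \mathfrak{p}_{E}$, then $a^{2} - \Delta \equiv -\Delta \pmod{\mathfrak{p}_{E}^{2}}$ is a unit and $v \in \mathfrak{p}_{F}$, so $(a^{2}-\Delta)v \in \mathfrak{p}_{E}$ while $c^{2}u \in \mathcal{O}_{E}^{\times}$; thus $(a^{2}-\Delta)v - c^{2}u$ is a unit, contradicting $s > 0$. Hence $a, d \in \mathcal{O}_{E}^{\times}$. Multiplying the $(1,1)$ condition by $b$ and the $(1,2)$ condition by $d$ and subtracting gives $(a - d)(b^{2}v - d^{2}u) \in \mathfrak{p}_{E}^{\lceil s \rceil}$; since $b^{2}v \in \mathfrak{p}_{E}$ and $d^{2}u \in \mathcal{O}_{E}^{\times}$, the second factor is a unit, yielding $a \equiv d \pmod{\mathfrak{p}_{E}^{\lceil s \rceil}}$ and, upon substituting back, $bv \equiv cu \pmod{\mathfrak{p}_{E}^{\lceil s \rceil}}$.

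Given these congruences, I would construct $c_{0} = \mat{\tilde{a} & \tilde{b} \\ \tilde{b}u^{-1}v & \tilde{a}} \in T(X)$ with $(\tilde{a}, \tilde{b}) \equiv (a, b) \pmod{\mathfrak{p}_{E}^{\lceil s \rceil}}$. Using $k \in \mathcal{K}$ (so that $\bar{a}d + \bar{c}b = 1$ and $\bar{b}d \in \sqrt{\epsilon}\mathcal{O}_{F}$) together with $a \equiv d$ and $bv \equiv cu$, a direct check shows that $(a, b)$ approximately satisfies the two defining equations of $T(X)$, namely $\alpha\bar{\alpha} + \beta\bar{\beta}u^{-1}v = 1$ and $\bar{\alpha}\beta + \alpha\bar{\beta} = 0$, with errors in $\mathfrak{p}_{E}^{\lceil s \rceil}$. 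Splitting $\alpha = \alpha_{1} + \sqrt{\epsilon}\alpha_{2}$ and $\beta = \beta_{1} + \sqrt{\epsilon}\beta_{2}$ with $\alpha_{i}, \beta_{i} \in F$, the Jacobian of this system at $(a, b)$ admits a unit $2 \times 2$ minor: since $a \in \mathcal{O}_{E}^{\times}$ at least one of $a_{1}, a_{2}$ is a unit in $\mathcal{O}_{F}$, and $b^{2}u^{-1}v \in \mathfrak{p}_{E}$ ensures the leading $a^{2}$-type term dominates. Hensel's lemma (using $p \ne 2$) then lifts $(a, b)$ to an honest solution $(\tilde{a}, \tilde{b})$ in the same residue class, and a final entrywise calculation confirms that every entry of $c_{0}^{-1}k - I$ lies in $\mathfrak{p}_{E}^{\lceil s \rceil}$, so $c_{0}^{-1}k \in \mathcal{K}_{s}$ as desired.

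The chief obstacle lies in this final step: producing an honest element of $T(X)$ with the prescribed residues. The coupling between the norm-type condition and the hermitian constraint $\bar{\alpha}\beta \in \sqrt{\epsilon}\mathcal{O}_{F}$ is what makes the lifting nontrivial, and this is where the hypotheses that $p \ne 2$ and $E/F$ is unramified enter crucially — via the surjectivity of $\mathrm{N}_{E/F}\colon 1 + \mathfrak{p}_{E}^{d} \twoheadrightarrow 1 + \mathfrak{p}_{F}^{d}$ recalled in Section~\ref{the field}.
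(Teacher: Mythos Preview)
Your proof is correct and follows the same strategy as the paper: derive the congruences $a \equiv d$ and $cu \equiv bv$ modulo $\mathfrak{p}_E^{\lceil s \rceil}$ from the centralizing condition, then lift $(a,b)$ to an honest element of $T(X)$ by a Hensel-type argument (the paper obtains the congruences slightly more directly from $kX \equiv Xk$ and carries out the lifting as an explicit step-by-step induction in Appendix~\ref{section1ofappendix} rather than invoking Hensel abstractly, but these are purely cosmetic differences). One minor inaccuracy in your closing commentary: what makes the lifting go through is the invertibility of $2$ together with $N_{E/F}(a)\in\mathcal{O}_F^\times$ (giving a unit Jacobian minor), not the surjectivity of $N_{E/F}$ on $1+\mathfrak{p}_E^{d}$.
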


\begin{proof}
	Replacing $s$ with $\lceil s \rceil$ we may assume $s\in \mathbb{N}_{>0}$. Since $\mathfrak{k}_{s}$ is the intersection with $\mathfrak{g}$ of $\mathfrak{p}_E^s\mathfrak{gl}(2,\mathcal{O}_{E})$,  and conjugation by $k\in \mathcal{K}$ preserves $\mathfrak{g}$ and $\mathfrak{k}_{s}$, we can determine the elements $k\in \mathcal{K}$ that satisfy $k^{-1}Xk \in X+\mathfrak{k}_{s}$ by a system of congruences of matrix entries.  The advantage is that $kX$ is well-defined as an element of   $\mathfrak{p}_E^s\mathfrak{gl}(2,\mathcal{O}_{E})$.
	
	We note first that $T(X)\mathcal{K}_{s}\subset C_{\mathcal{K}}(X+\mathfrak{k}_{s})$.  Namely, suppose $k=cg$ with  $c\in T(X)$ and $g\in \mathcal{K}_{s}$.  Since $c\in \mathcal{K}$, it preserves $\mathfrak{k}_{s}$, so we have $c\in C_{\mathcal{K}}(X+\mathfrak{k}_{s})$. As we can write $g=I+U$ where $U$ is a matrix with all entries in $\mathfrak{p}_E^s$, it follows that $gX\equiv Xg \mod \mathfrak{p}_{E}^{s}$, where this indicates a congruence of matrix coefficients.  Therefore $gXg^{-1} \in X+\mathfrak{k}_{s}$, yielding the result.
	
	Now suppose $g\in C_{\mathcal{K}}(X+\mathfrak{k}_{s})$.  We write $g=\mat{a&b\\c&d}$ with (among other conditions) $a,b,c,d\in \mathcal{O}_{E}$ and $ad-bc\in \mathcal{O}_{E}^{\times}$.  Noting that $gXg^{-1}\equiv X$ modulo $\mathfrak{p}_E^s$ yields the equality $gX\equiv Xg$ modulo $\mathfrak{p}_E^s$. Computing the products on both sides yields the linear system
	$$
	cu\equiv bv, \quad du\equiv au \quad \text{and} \quad av\equiv dv.
	$$
	Since $u\in \mathcal{O}_{E}^{\times}$, this implies $a\equiv d \mod \mathfrak{p}_E^s$ and $c\equiv u^{-1}vb \mod \mathfrak{p}_E^s$.   Referring back to~\eqref{definitionofTofX}, we see that $k$ is congruent as a matrix modulo $\mathfrak{p}_{E}^{s}$ to an element of $T(X)$. In fact, we can argue by induction that there exists $k'\in T(X)$ such that $(k')^{-1}g\in \mathcal{K}_{s}$, whence $g\in T(X)\mathcal{K}_{s}$, as required.
	As this  inductive argument is an involved matrix calculation, we have relegated it to Appendix~\ref{section1ofappendix}.
\end{proof}

\begin{proof}[Proof of the proposition]
	By Lemma~\ref{normalizerequalscentralizerofacoset} we have $	N_{\mathcal{K}}(\Psi_{X}) = C_{\mathcal{K}}\left(X + \mathfrak{k}_{-\frac{d}{2}}\right)
	$.  Since conjugation by $\mathcal{K}$ preserves $\mathfrak{k}_{-\frac{d}{2}}$, and  $\mathcal{K}$ centralizes  $X(z)$,  it follows that
	\[
	C_{\mathcal{K}}\left(X + \mathfrak{k}_{-\frac{d}{2}}\right)
	=
	C_{\mathcal{K}}\left(\widetilde{X}(u,v) + \mathfrak{k}_{-\frac{d}{2}}\right).
	\]
	Multiplying the coset \(\widetilde{X}(u,v) + \mathfrak{k}_{-\frac{d}{2}}\) by \(\varpi^{d}\) yields $	\widetilde{X}(\varpi^{d}u,\varpi^{d}v) + \mathfrak{k}_{\frac{d}{2}}
	$, 
	and therefore
	\[
	C_{\mathcal{K}}\left(\widetilde{X}(u,v) + \mathfrak{k}_{-\frac{d}{2}}\right)
	=
	C_{\mathcal{K}}\left(\widetilde{X}(\varpi^{d}u,\varpi^{d}v) + \mathfrak{k}_{\frac{d}{2}}\right).
	\]
	Since \(\nu(u) = -d\) and \(\nu(v) > -d\), we have \(\nu(\varpi^{d}u) = 0\) and \(\nu(\varpi^{d}v) > 0\). Moreover \(\frac{d}{2} > 0\), so by Lemma~\ref{centralizerofXuv1} we obtain
	\[
	C_{\mathcal{K}}\left(\widetilde{X}(\varpi^{d}u,\varpi^{d}v) + \mathfrak{k}_{\frac{d}{2}}\right)
	=
	T(\widetilde{X}(\varpi^{d}u,\varpi^{d}v))\,\mathcal{K}_{\frac{d}{2}}
	=
	T(X)\,\mathcal{K}_{\frac{d}{2}},
	\]
	as required.
\end{proof}

First suppose that $d$ is odd. Then  $\mathcal{K}_{\frac{d}{2}+}=\mathcal{K}_{\frac{d}{2}}$. Thus in this case $\Psi_{X}$ is  a character of $\mathcal{K}_{\frac{d}{2}}$, 
and its normalizer satisfies $N(\Psi_{X}) = T(X)\,\mathcal{K}_{\frac{d}{2}}
$. Since $T(X) \subset \mathcal{K}$ is abelian, we may extend 
$\Psi_{X}\!\mid_{T(X) \cap \mathcal{K}_{\frac{d}{2}}}$ to a character of $T(X)$. 
Let $\zeta$ denote such an extension, and let $\Psi_{X,\zeta}$ be the unique extension of these characters to $T(X)\mathcal{K}_{\frac{d}{2}}$. 
Then, by part~(a) of Theorem~\ref{Cliff1},
\[
\mathrm{Ind}_{T(X)\mathcal{K}_{\frac{d}{2}}}^{\mathcal{K}} \Psi_{X,\zeta}
\]
is an irreducible representation of $\mathcal{K}$.

Next suppose that $d$ is even. Then $\mathcal{K}_{ \frac{d}{2}+}=\mathcal{K}_{\frac{d}{2}+1}$ is a proper normal subgroup of  $\mathcal{K}_{\frac{d}{2}}$, and  in this case one verifies that $\Psi_{X}$ doesn't extend to a character of $\mathcal{K}_{\frac{d}{2}}$. Therefore, we cannot apply the same simplified argument. Instead, we define the subgroup
$$
\mathcal{J}_{d}= \left\{\begin{pmatrix}1+\mathfrak{p}_{E}^{\lceil \frac{d}{2}\rceil}&\mathfrak{p}_{E}^{\lceil\frac{d}{2} \rceil}\\
	\mathfrak{p}_{E}^{\lceil\frac{d+1}{2} \rceil}& 1+\mathfrak{p}_{E}^{\lceil \frac{d}{2}\rceil} \end{pmatrix} \right\}  \cap G.
$$
Then we have proper inclusions $\mathcal{K}_{ d+\frac{1}{2}} \triangleleft \mathcal{J}_{d} \triangleleft \mathcal{K}_{\frac{d}{2}}$ of normal subgroups and  $\Psi_{X}$ extends to a character of $\mathcal{J}_{d}$. When $d$ is odd, we define $\mathcal{J}_{d} = \mathcal{K}_{\frac{d}{2}}$.

\begin{lemma}\label{normalizerJd}
	Let $d \in 2\mathbb{Z}_{>0}$. Then the normalizer of $\Psi_X$ in $T(X)\mathcal{K}_{\frac{d}{2}}$, considered as a character of $\mathcal{J}_d$, is $T(X)\mathcal{J}_d$.
\end{lemma}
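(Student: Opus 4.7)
The plan is to mirror the proof of Proposition~\ref{normalizerofpsiX}, with the finer group $\mathcal{J}_d$ replacing $\mathcal{K}_{\frac{d}{2}+}$. Arguing as in Lemma~\ref{normalizerequalscentralizerofacoset}, an element $g \in T(X)\mathcal{K}_{\frac{d}{2}}$ normalizes $\Psi_X$, viewed as a character of $\mathcal{J}_d$, if and only if
\[
\psi\bigl(\mathrm{Tr}((gXg^{-1}-X)(j-I))\bigr) = 1 \quad \text{for every } j \in \mathcal{J}_d.
\]
Writing $\mathfrak{j}_d$ for the lattice in $\mathfrak{g}$ corresponding to $\mathcal{J}_d$ under the Moy--Prasad correspondence, this is the condition that $gXg^{-1} - X$ lies in the annihilator $\mathfrak{j}_d^{\perp}$ of $\mathfrak{j}_d$ under the trace pairing modulo $\mathfrak{p}_F$. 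A direct computation using the $F$-basis $\{H,\sqrt{\epsilon}I,\sqrt{\epsilon}E_{12},\sqrt{\epsilon}E_{21}\}$ of $\mathfrak{g}$ shows that $\mathfrak{j}_d^{\perp}$ consists of matrices in $\mathfrak{g}$ with diagonal entries in $\mathfrak{p}_E^{1-d/2}$, upper-right in $\sqrt{\epsilon}\mathfrak{p}_F^{-d/2}$, and lower-left in $\sqrt{\epsilon}\mathfrak{p}_F^{1-d/2}$; this lattice sits strictly between $\mathfrak{k}_{-d/2+1}$ and $\mathfrak{k}_{-d/2}$, enlarged from the former only in the upper-right component.

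The containment $T(X)\mathcal{J}_d \subseteq N_{T(X)\mathcal{K}_{\frac{d}{2}}}(\Psi_X)$ follows because $T(X)$ centralizes $X$ and normalizes $\mathcal{J}_d$, which I would verify directly using $\nu(u^{-1}v) > 0$, while $\mathcal{J}_d$ stabilizes its own character $\Psi_X$ automatically by inner conjugation. For the reverse inclusion, write a normalizer as $k = tg$ with $t \in T(X)$ and $g \in \mathcal{K}_{\frac{d}{2}}$; by the previous step it suffices to show $g \in \mathcal{J}_d$. Setting $g = I + Y$ with $Y \in \mathfrak{k}_{d/2}$, one has $gXg^{-1} - X = (I+Y)^{-1}[X,Y]$, and the higher-order corrections $Y^n[X,Y]$ with $n \geq 1$ lie in $\mathfrak{k}_{d/2}\cdot\mathfrak{k}_{-d/2} \subseteq \mathfrak{k}_0 \subseteq \mathfrak{j}_d^{\perp}$, so the condition reduces to $[X,Y] \in \mathfrak{j}_d^{\perp}$.

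An entry-by-entry calculation of $[X,Y]$ using the explicit form of $X$ then shows that the constraints on the $(1,2)$, $(2,1)$, and $(2,2)$ entries are automatic from $Y \in \mathfrak{k}_{d/2}$ together with $\nu(u) = -d$ and $\nu(v) > -d$, while the constraint coming from the $(1,1)$-entry forces the lower-left entry of $Y$ to lie in $\sqrt{\epsilon}\mathfrak{p}_F^{d/2+1}$ — exactly the condition cutting $\mathfrak{j}_d$ out of $\mathfrak{k}_{d/2}$. Therefore $g \in \mathcal{J}_d$, as required. The main technical obstacle is the asymmetric bookkeeping of $F$- and $E$-valuations, especially verifying that only the $(1,1)$-entry of $[X,Y]$ introduces a genuinely new condition on $Y$ and that the tail of $(I+Y)^{-1}[X, Y]$ stays inside $\mathfrak{j}_d^{\perp}$.
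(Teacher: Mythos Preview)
Your argument is essentially correct, but it differs in execution from the paper's. The paper proceeds by choosing explicit coset representatives
\[
R = \left\{ \begin{pmatrix} 1 & 0 \\ y\sqrt{\epsilon}\,\varpi^{d/2} & 1 \end{pmatrix} \;\middle|\; y \in \mathcal{O}_F^{\times} \right\}
\]
for $\mathcal{K}_{d/2}/\mathcal{J}_d$ and, for each $k\in R$, exhibits an element $g\in\mathcal{J}_d$ on which $\Psi_X(g)\neq\Psi_{kXk^{-1}}(g)$, isolating the obstruction term $\psi(-2a_1yu\epsilon\varpi^{d})$. Your approach instead identifies the normalizer condition with $[X,Y]\in\mathfrak{j}_d^{\perp}$ and reads off the constraint on the lower-left entry of $Y$ from the diagonal of $[X,Y]$. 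These are dual versions of the same valuation check: the paper's test element picks out exactly the pairing that your $(1,1)$-entry computes. Your route is more structural and would adapt more readily to other filtration groups; the paper's is self-contained and avoids any appeal to lattice duality.

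Two small slips to tidy up. First, $gXg^{-1}-X = -[X,Y](I+Y)^{-1}$, not $(I+Y)^{-1}[X,Y]$; the sign and order are off, though this does not affect your tail estimate since you only use that each correction term has entries in $\mathcal{O}_E$. Second, the $(2,2)$-entry of $[X,Y]$ equals $-\epsilon(u\gamma-v\beta)$, the negative of the $(1,1)$-entry, so it imposes the \emph{same} constraint rather than being automatic; only the off-diagonal entries are genuinely free. Finally, be aware that $Y=g-I$ lies in $\mathfrak{g}$ only modulo $\mathfrak{p}_E^{d}$, so strictly speaking you should split $Y=Y_0+Y_1$ with $Y_0\in\mathfrak{k}_{d/2}$ and $Y_1$ with entries in $\mathfrak{p}_E^{d}$; the contribution $[X,Y_1]$ then has integral entries and is absorbed into $\mathfrak{k}_0\subseteq\mathfrak{j}_d^{\perp}$, so your conclusion survives.
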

\begin{proof}
	It is straightforward to verify that $T(X)\mathcal{J}_d$ centralizes $\Psi_X$. To prove the reverse inclusion, we consider a set of coset representatives
	\[
	R = \left\{ \begin{pmatrix} 1 & 0 \\ y\sqrt{\epsilon}\varpi^{\frac{d}{2}} & 1 \end{pmatrix} \;\middle|\; y \in \mathcal{O}_F^{\times} \right\}
	\]
	of $\mathcal{K}_{\frac{d}{2}} / \mathcal{J}_d$. We will show that none of these representatives normalizes $\Psi_X$. 
	As in the proof of Proposition~\ref{normalizerofpsiX} we may without loss of generality assume that $X=\widetilde{X}(u,v)\in\mathfrak{k}_{-d}$. We now show that for all $k\in R$ and  $g\in\mathcal{J}_{d}$,  $\Psi_{X}(g)\neq \Psi_{kXk^{-1}}(g)$.
	
	Let $k = \begin{pmatrix} 1 & 0 \\ y\sqrt{\epsilon}\varpi^{\frac{d}{2}} & 1 \end{pmatrix} \in R,$ and let $g = \begin{pmatrix} 1 + a\varpi^{\frac{d}{2}} & b\varpi^{\frac{d}{2}} \\ c\varpi^{\frac{d}{2}+1} & 1 - \overline{a}\varpi^{\frac{d}{2}} \end{pmatrix} \in \mathcal{J}_d.
	$ Then
	\begin{align}
		\Psi_X(g) &= \psi(\mathrm{Tr}(X(g - I))) 
		= \psi\left(  uc\sqrt{\epsilon}\varpi^{\frac{d}{2}+1} + vb\sqrt{\epsilon}\varpi^{\frac{d}{2}}  \right), \label{eq:psiX}
	\end{align}
	and
	\begin{align}
		\Psi_{kXk^{-1}}(g) &
		= \psi\left(  uc\sqrt{\epsilon}\varpi^{\frac{d}{2}+1} + vb\sqrt{\epsilon}\varpi^{\frac{d}{2}} - yua\epsilon\varpi^d - yu\overline{a}\epsilon\varpi^d \right), \label{eq:psikX}
	\end{align}
	where we have simplified by removing all terms of valuation at least $1$.
	Comparing \eqref{eq:psiX} and \eqref{eq:psikX} and writing $a=a_1+\sqrt{\epsilon}a_2$, we find that  the characters to agree if and only if
	\begin{equation}\label{eq:final}
		\psi(-2a_1yu\epsilon\varpi^d) = 1 \quad \text{for all } a_1 \in \mathcal{O}_F.
	\end{equation}
	Since \( y \in \mathcal{O}_F^\times \), \( \epsilon \in \mathcal{O}_F^\times \), and \( \nu(u) = -d \), we have \( yu\epsilon\varpi^d \in \mathcal{O}_F^\times \). Thus, as \( a_1 \) ranges over \( \mathcal{O}_F \), the expression \( -2a_1yu\epsilon\varpi^d \) ranges over all of \( \mathcal{O}_F \), and then \eqref{eq:final} contradicts the nontriviality of \( \psi \) on \( \mathcal{O}_F \). Therefore, \( \Psi_X \neq \Psi_{kXk^{-1}} \), and \( k \) does not centralize \( \Psi_X \). Hence, the normalizer of \( \Psi_X \) in \( T(X)\mathcal{K}_{\frac{d}{2}} \) is  \( T(X)\mathcal{J}_d \).
\end{proof}

We now construct the key representations of $\mathcal{K}$ that we will need in the sequel. It is a generalization of Shalika's results \cite[Theorems 4.2.1 and 4.2.5]{Sha2004} to the context of $G$.

\begin{theorem}\label{Rep of K}
	Let $X$ be as in~\eqref{element X}, and let $\zeta$ be a character of $T(X)$ which coincides with $\Psi_X$ on the intersection $T(X) \cap \mathcal{J}_{d}$.  Write $\Psi_{X, \zeta}$ for the unique character of $T(X)\mathcal{J}_{d}$ which extends $\zeta$ and $\Psi_X$. Then 
	\begin{equation}\label{repofK1}
		\mathcal{S}_d(X, \zeta) := \mathrm{Ind}_{T(X)\mathcal{J}_{d}}^\mathcal{K} \Psi_{X, \zeta}
	\end{equation}
	is an irreducible representation of $\mathcal{K}$ of depth $d$ and of degree $q^{d-1}(q^2-1)$.
\end{theorem}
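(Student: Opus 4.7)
The plan is to establish irreducibility via Clifford theory (Theorem~\ref{Cliff1}(a)), split into cases on the parity of $d$, and then read off depth and degree from the Moy--Prasad filtration. The normalizer computations already carried out in Proposition~\ref{normalizerofpsiX} and Lemma~\ref{normalizerJd} do the heavy lifting, and the argument amounts to plugging them into Clifford's criterion.

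For odd $d$, the definition of $\mathcal{J}_d$ collapses to $\mathcal{K}_{d/2+}$, and Proposition~\ref{normalizerofpsiX} gives $N_{\mathcal{K}}(\Psi_X)=T(X)\mathcal{K}_{d/2}=T(X)\mathcal{J}_d$. Since $\Psi_{X,\zeta}$ is a one-dimensional (hence irreducible) representation of this normalizer whose restriction to $\mathcal{J}_d$ is $\Psi_X$, Theorem~\ref{Cliff1}(a) applied to $\mathcal{J}_d \triangleleft \mathcal{K}$ with $\phi=\Psi_X$ and $\sigma=\Psi_{X,\zeta}$ delivers the irreducibility of $\mathcal{S}_d(X,\zeta)$ in one step.

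For even $d$, the chain $\mathcal{K}_{d/2+}\subsetneq \mathcal{J}_d \subsetneq \mathcal{K}_{d/2}$ is proper, and the plan is to apply Clifford theory twice. First, Lemma~\ref{normalizerJd} identifies the normalizer of $\Psi_X$ inside $T(X)\mathcal{K}_{d/2}$ with $T(X)\mathcal{J}_d$; applying Theorem~\ref{Cliff1}(a) to $\mathcal{J}_d\triangleleft T(X)\mathcal{K}_{d/2}$ with $\phi=\Psi_X$ and $\sigma=\Psi_{X,\zeta}$ shows that
\[
\rho \;:=\; \mathrm{Ind}_{T(X)\mathcal{J}_d}^{T(X)\mathcal{K}_{d/2}}\Psi_{X,\zeta}
\]
is an irreducible representation of $T(X)\mathcal{K}_{d/2}$ whose restriction to $\mathcal{K}_{d/2+}$ contains $\Psi_X$ (by Frobenius reciprocity, since $\Psi_{X,\zeta}|_{\mathcal{K}_{d/2+}}=\Psi_X$). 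Second, Proposition~\ref{normalizerofpsiX} gives $N_{\mathcal{K}}(\Psi_X)=T(X)\mathcal{K}_{d/2}$, so a second application of Theorem~\ref{Cliff1}(a), now to $\mathcal{K}_{d/2+}\triangleleft \mathcal{K}$ with $\phi=\Psi_X$ and $\sigma=\rho$, yields irreducibility of $\mathrm{Ind}_{T(X)\mathcal{K}_{d/2}}^{\mathcal{K}}\rho$. Transitivity of induction identifies this with $\mathcal{S}_d(X,\zeta)$.

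Finally, for the depth I note that $\mathcal{K}_{d+}\subset \mathcal{J}_d$ and $\Psi_{X,\zeta}|_{\mathcal{K}_{d+}}=1$ (because $\Psi_X$ has depth $d$), so $\mathcal{K}_{d+}$ acts trivially on the induction and the depth is at most $d$; nontriviality of $\Psi_X$ on $\mathcal{K}_d\cap \mathcal{J}_d$ (which is built into $X\in \mathfrak{k}_{-d}$ and the trace form) prevents the depth from being smaller. For the degree, I would compute
\[
\deg\mathcal{S}_d(X,\zeta)\;=\;[\mathcal{K}:T(X)\mathcal{J}_d]\;=\;\frac{[\mathcal{K}:\mathcal{J}_d]}{[T(X):T(X)\cap \mathcal{J}_d]},
\]
using $|\mathcal{K}/\mathcal{K}_1|$ together with the Moy--Prasad index calculation (entirely parallel to the one carried out in Lemma~\ref{dimensionofVchiKn}) and the explicit form~\eqref{definitionofTofX} of $T(X)$, the output being $q^{d-1}(q^2-1)$. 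The only step I expect to require care is the bookkeeping in the even case: namely, that the extension of $\Psi_X$ from $\mathcal{K}_{d/2+}$ to $\mathcal{J}_d$ is a well-defined character and is compatible with $\zeta$ on $T(X)\cap\mathcal{J}_d$, so that $\Psi_{X,\zeta}$ is unambiguously defined on $T(X)\mathcal{J}_d$; this relies on the asymmetric choice in the definition of $\mathcal{J}_d$ and on the commuting relations encoded in $T(X)$.
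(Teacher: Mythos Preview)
Your proposal is correct and follows essentially the same route as the paper: the odd/even split, the two-step Clifford argument in the even case (first inducing to $T(X)\mathcal{K}_{d/2}$ via Lemma~\ref{normalizerJd}, then to $\mathcal{K}$ via Proposition~\ref{normalizerofpsiX}), and the depth and index computations all mirror the paper's proof. The only cosmetic difference is that the paper factors the index $[\mathcal{K}:T(X)\mathcal{J}_d]$ through $\mathcal{K}_{d/2+}$ rather than through $\mathcal{J}_d$ directly, but the arithmetic is equivalent.
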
	
\begin{proof}
	When $d$ is odd, we already established that 
	\[
	S_{d}(X, \zeta) := \mathrm{Ind}_{T(X)\mathcal{J}_{d}}^{\mathcal{K}} \Psi_{X, \zeta}
	\]
	is an irreducible representation of $\mathcal{K}$. Now suppose that $d$ is even. Note that from Lemma~\ref{normalizerJd} the normalizer in $T(X) \mathcal{K}_{\frac{d}{2}}$ of the character $\Psi_{X}$ of $\mathcal{J}_{d}$ is $T(X)\mathcal{J}_{d}$, and $\Psi_{X, \zeta}$ is an extension of $\Psi_{X}$ to this group. Thus by~Theorem~\ref{Cliff1}, $\mathrm{Ind}_{T(X)\mathcal{J}_{d}}^{T(X)\mathcal{K}_{\frac{d}{2}}}\Psi_{X, \zeta}$ is an irreducible representation of $T(X)\mathcal{K}_{\frac{d}{2}}$ that contains $\Psi_{X}$ upon restriction to $\mathcal{K}_{\frac{d}{2}+1}$. Once again using Clifford theory for $\mathcal{K}$, and $\mathrm{Ind}_{T(X)\mathcal{J}_{d}}^{T(X)\mathcal{K}_{\frac{d}{2}}}\Psi_{X, \zeta}$ as a representation of $T(X)\mathcal{K}_{\frac{d}{2}}$ that contains $\Psi_X$ upon restriction to $\mathcal{K}_{\frac{d}{2}+1}$, we have that
	\begin{align*}
		\mathrm{Ind}_{T(X)\mathcal{K}_{\frac{d}{2}}}^{\mathcal{K}}\mathrm{Ind}_{T(X)\mathcal{J}_{d}}^{T(X)\mathcal{K}_{\frac{d}{2}}}\Psi_{X, \theta}&=\mathrm{Ind}_{T(X)\mathcal{J}_{d}}^{\mathcal{K}}\Psi_{X, \zeta}=\colon \mathcal{S}_{d}(X, \zeta)
	\end{align*}
	is an irreducible representation of  $\mathcal{K}$.
	
	As $\Psi_{X}$ has depth $d$ and $\mathcal{K}_{d+}\trianglelefteq \mathcal{K}$, we have $\mathcal{K}_{d+}\subseteq \mathrm{ker}(\mathcal{S}_{d}(X, \zeta))$, so the depth of $\mathcal{S}_d(X, \zeta)$ as a representation of $\mathcal{K}$ is $d$.

	Since $\Psi_{X, \zeta}$ is one-dimensional, the degree of  $\mathcal{S}_{d}(X, \zeta)$ is equal to the index $[\mathcal{K}\colon T(X)\mathcal{J}_{d}]$. 	We have $\mathcal{K}_{\frac{d}{2}+}\leq T(X)\mathcal{J}_{d}\leq \mathcal{K}$, and so
	$$[\mathcal{K}\colon T(X)\mathcal{J}_{d}]=\frac{[\mathcal{K}\colon \mathcal{K}_{\frac{d}{2}+}]}{[T(X)\mathcal{J}_{d}\colon \mathcal{J}_{d}][\mathcal{J}_{d}\colon \mathcal{K}_{\frac{d}{2}+}]}.$$
Since $T(X)\!\mod \mathfrak{p}_{E}\cong ZU$, $Z\cong E^{1},$ and $U\cong F$,  we have $[T(X)\colon T(X)\cap \mathcal{K}_{0+}]=q(q+1)$. The indices  $[T(X)\cap\mathcal{K}_{0+}\colon T(X)\cap \mathcal{J}_{d}], [\mathcal{J}_{d}\colon \mathcal{K}_{\frac{d}{2}+}], [\mathcal{K}_{0+}\colon \mathcal{K}_{\frac{d}{2}+}] $ are the same as the indices of corresponding $\mathcal{O}_{F}$-module in the Lie algebra. Thus we have
	\begin{align*}
		[T(X)\mathcal{J}_{d}\colon \mathcal{J}_{d}]
		&=[T(X)\colon T(X)\cap \mathcal{K}_{0+}][T(X)\cap\mathcal{K}_{0+}\colon T(X)\cap \mathcal{J}_{d}]=q(q+1)q^{2\lceil \frac{d}{2} \rceil-2},
	\end{align*}
	\begin{align*}
		[\mathcal{J}_{d}\colon \mathcal{K}_{\frac{d}{2}+}]=q^{3\left( \lceil\frac{d+1}{2} \rceil-\lceil\frac{d}{2}\rceil\right)},
	\end{align*}
	and 
	\begin{align*}	
		[\mathcal{K}\colon \mathcal{K}_{\frac{d}{2}+}]&=[\mathcal{K}\colon \mathcal{K}_{0+}][\mathcal{K}_{0+}\colon \mathcal{K}_{\frac{d}{2}+}]=|U(1,1)(\mathfrak{f})|[\mathcal{K}_{0+}\colon \mathcal{K}_{\frac{d}{2}+}]
		=q(q-1)(q+1)^{2}q^{4(\lceil \frac{d+1}{2} \rceil-1)}.
	\end{align*}
	Putting everything together we obtain
	\begin{align*}
		[\mathcal{K}\colon T(X)\mathcal{J}_{d}]=\frac{q(q-1)(q+1)^{2}q^{4(\lceil \frac{d+1}{2} \rceil-1)}}{q(q+1)q^{2\lceil \frac{d}{2} \rceil-2} q^{3\left( \lceil\frac{d+1}{2} \rceil-\lceil\frac{d}{2}\rceil\right)}}
		=(q^{2}-1)q^{\lceil \frac{d+1}{2} \rceil+\lceil\frac{d}{2} \rceil}q^{-2}=(q^{2}-1)q^{d-1}.
	\end{align*}
\end{proof}

\subsection{Representations associated to nilpotent orbits}\label{repsofKassociatedwithnilpotentorbits}

Now let us focus on a special case of particular importance: when the element $X$ is nilpotent. We will first determine a set of representatives for nilpotent orbits of $\mathcal{K}$ on $\mathfrak{g}$.

Recall that for matrix Lie algebras, an element $X$ in $\mathfrak{g}$ nilpotent if and only if $X^n=0$ for some positive integer $n$. By \cite[Theorem 3.2]{humphreys2012}, it follows that all nilpotent elements of $\mathfrak{g}$ are $G$-conjugate to a strictly upper triangular matrix. For $\delta\in F$, we define a nilpotent element $X_{\delta}\in\mathfrak{g}$ as 
\begin{equation}\label{definitionofXdelta}
	X_{\delta}:=\begin{pmatrix}0&\delta\sqrt{\epsilon}\\ 0&0 \end{pmatrix}.
\end{equation}

\begin{lemma}\label{nilpotentKorbits}
	The nilpotent $G$-orbits in $\mathfrak{g}$ are parametrized by the elements $\left\{X_{\delta} \;\middle\vert\; \delta \in \{0, 1, \varpi\} \right\}.$ Each $G$-orbit further decomposes as a disjoint union of infinitely many $\mathcal{K}$-orbits,
	parametrized by $\left\{X_{0},\, X_{\varpi^{-d}} \mid d \in \mathbb{Z}\right\}.$
\end{lemma}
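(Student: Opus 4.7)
The plan is to reduce every nonzero nilpotent to a strictly upper triangular form $X_{y}$ by exploiting the transitivity of $G$ (and $\mathcal{K}$) on isotropic lines of the hermitian form defining $G$, and then to keep track of the remaining discrete invariant $y$ via the norm map.

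First I would observe that $X\in\mathfrak{g}$ is nilpotent iff $X^{2}=0$, which for a $2\times 2$ matrix amounts to having trace and determinant both zero. Combined with the explicit form of $\mathfrak{g}$ from~\S\ref{the group}, this forces $X=\begin{pmatrix} u & y\sqrt{\epsilon}\\ z\sqrt{\epsilon} & -u\end{pmatrix}$ with $u,y,z\in F$ and $u^{2}+\epsilon yz=0$; in particular every $X_{\delta}$ is nilpotent. A short direct computation then shows that when $X\ne 0$ the line $\ker X\subset E^{2}$ is isotropic for the hermitian form $\langle v,w\rangle:=\overline{v}^{\top}\mathrm{w}\,w$. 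Since $G$ acts transitively on nonzero isotropic vectors in $E^{2}$ (given such $v$ one exhibits an explicit isotropic partner $w$ with $\langle v,w\rangle=1$, so that $(v,w)$ are the columns of an element of $G$), one can conjugate $\ker X$ to $Ee_{1}$ and thereby reduce to $X=X_{y}$ for some $y\in F^{\times}$.

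To classify the $G$-orbits on the family $\{X_{y}\}_{y\in F^{\times}}$, I would note that the stabilizer of $Ee_{1}$ in $G$ is the Borel $B=TU$, that $U$ centralizes every $X_{y}$, and that conjugation by $\diag(a,\overline{a}^{-1})\in T$ sends $X_{y}$ to $X_{\mathrm{N}_{E/F}(a)\,y}$. Hence $X_{y}$ and $X_{y'}$ are $G$-conjugate iff $y/y'\in \mathrm{N}_{E/F}(E^{\times})$, and since $E/F$ is unramified, $F^{\times}/\mathrm{N}_{E/F}(E^{\times})\cong \mathbb{Z}/2\mathbb{Z}$ with representatives $\{1,\varpi\}$ (the norm is surjective on units and satisfies $\mathrm{N}_{E/F}(\varpi)=\varpi^{2}$). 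Together with the zero orbit, this yields the three $G$-orbit representatives $X_{0},X_{1},X_{\varpi}$.

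For the refinement to $\mathcal{K}$-orbits, the Iwasawa decomposition $G=\mathcal{K}B$ shows that $\mathcal{K}$ itself acts transitively on isotropic lines, so the same reduction yields $X\sim X_{y}$ under $\mathcal{K}$ for some $y\in F^{\times}$. The stabilizer of $Ee_{1}$ in $\mathcal{K}$ is $\mathcal{B}$, and its induced action on $y$ is scaling by $\mathrm{N}_{E/F}(a)$ with $a\in\mathcal{O}_{E}^{\times}$; since $\mathrm{N}_{E/F}\colon \mathcal{O}_{E}^{\times}\twoheadrightarrow \mathcal{O}_{F}^{\times}$ is surjective in the unramified case, the $\mathcal{B}$-orbits on $F^{\times}$ are parametrized precisely by $\nu(y)\in\mathbb{Z}$. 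Within each nonzero $G$-orbit the admissible valuations form a fixed congruence class mod $2$, so the $\mathcal{K}$-orbit representatives in the $G$-orbit of $X_{\delta}$ are $X_{\delta\varpi^{2d}}$ for $d\in\mathbb{Z}$; collecting both $G$-orbits yields all $X_{\varpi^{n}}$ with $n\in\mathbb{Z}$, which after re-indexing $n=-d$ gives the list $\{X_{\varpi^{-d}}\colon d\in\mathbb{Z}\}$ of nonzero representatives. The main technical point is the explicit construction of the isotropic partner $w$ that underpins the transitivity of $G$ on nonzero isotropic vectors, but this is handled by a short case split on which coordinate of $v$ is a unit, after which all remaining steps are bookkeeping with the norm map.
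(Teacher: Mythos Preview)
Your argument is correct and takes a somewhat different route from the paper's. The paper invokes \cite[Theorem~3.2]{humphreys2012} to assert that every nilpotent element of $\mathfrak{g}$ is $G$-conjugate to a strictly upper-triangular matrix, and then determines when $gX_{\delta_1}g^{-1}=X_{\delta_2}$ by solving the matrix equation directly (forcing $c=0$, $d=\overline a^{-1}$, $a\overline a\,\delta_1=\delta_2$). You instead give a self-contained geometric proof: a nonzero nilpotent $X$ has $\ker X$ isotropic for the hermitian form, and Witt-type transitivity of $G$ on isotropic lines conjugates $\ker X$ to $Ee_1$, reducing to $X=X_y$; the stabilizer $B$ then acts on $y$ through the norm. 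For the $\mathcal{K}$-orbits you use the Iwasawa decomposition $G=\mathcal{K}B$ to see that $\mathcal{K}$ is already transitive on isotropic lines, so the same reduction works with stabilizer $\mathcal{B}$ and norm group $\mathrm{N}_{E/F}(\mathcal{O}_E^\times)=\mathcal{O}_F^\times$. This is a genuine advantage: the paper's proof only checks when two $X_{\delta}$'s are $\mathcal{K}$-conjugate and does not explicitly justify why every element of a nonzero $G$-orbit is $\mathcal{K}$-conjugate to some $X_{\delta'}$---your Iwasawa step supplies exactly this. Both arguments conclude via $F^\times/\mathrm{N}_{E/F}(E^\times)\cong\mathbb{Z}/2\mathbb{Z}$ and the surjectivity of the norm on units in the unramified case, so the endpoint is identical.
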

\begin{proof}
	Let $X_{\delta_1}, X_{\delta_2} \in \mathfrak{g}$ with $\delta_1, \delta_2 \in F$, and suppose there exists
	\[
	g = \begin{pmatrix}a & b \\ c & d\end{pmatrix} \in G \quad \text{such that} \quad gX_{\delta_1}g^{-1} = X_{\delta_2}.
	\]
	A direct calculation shows that if $g$ satisfies this relation, then necessarily $c = 0$, $d = \overline{a}^{-1}$, and $a\overline{a}\,\delta_1 = \delta_2.$
	
Since $a \in E^{\times}$, we deduce that $\delta_2 \in \delta_1 \cdot N_{E/F}(E^{\times})$. In particular, if $\delta_1 = 0$, then $\delta_2 = 0$, so $X_{0}$ lies in its own orbit. For $\delta_1 \neq 0$, the $G$-orbits of $X_{\delta}$ are thus parametrized by the cosets of $F^{\times} / N_{E/F}(E^{\times})$. By local class field theory~(\cite[Chapter~VI, Theorem~2]{Cas1967}), this quotient has index~\(2\). Since \( \varpi \notin N_{E/F}(E^{\times}) \), the set \( \{1, \varpi\} \) forms a complete set of coset representatives for \( F^{\times} / N_{E/F}(E^{\times}) \), proving the first claim.
	
For the $\mathcal{K}$-orbits, we now require $g \in \mathcal{K}$. Then the above computation forces $a \in \mathcal{O}_{E}^{\times}$, so $a\overline{a} \in \mathcal{O}_{F}^{\times}$, and hence $a\overline{a}\,\delta_1 = \delta_2$ implies $\nu(\delta_1) = \nu(\delta_2)$. Conversely, if $\nu(\delta_1)=\nu(\delta_2)$, then $\delta_2/\delta_1 \in \mathcal{O}_{F}^{\times}$, and since $E/F$ is unramified, the norm map $N_{E/F} : \mathcal{O}_{E}^{\times} \to \mathcal{O}_{F}^{\times}$ is surjective, so we may choose $a \in \mathcal{O}_{E}^{\times}$ such that $a\overline{a} = \delta_2/\delta_1$, which gives $gX_{\delta_1}g^{-1} = X_{\delta_2}$ with $g \in \mathcal{K}$. This establishes the stated parametrization of $\mathcal{K}$-orbits.
\end{proof}

For $\delta \in\{0, 1, \varpi\}$, let $\mathcal{N}_{\delta}$ denote the $G$-orbit of $X_{\delta}$. By the proof of Lemma~\ref{nilpotentKorbits} each $G$-orbit $\mathcal{N}_{\delta}$ further decomposes as follows
$$\mathcal{N}_{0}=\mathcal{K}\cdot X_{0},\hspace{3em}\mathcal{N}_{1}=\bigsqcup_{d\in 2\mathbb{Z}}\mathcal{K}\cdot X_{\varpi^{-d}},\hspace{3em}\mathcal{N}_{\varpi}=\bigsqcup_{d\in 2\mathbb{Z}+1}\mathcal{K}\cdot X_{\varpi^{-d}}.$$

Let $d\in\mathbb{Z}_{>0}$, and $a\in\mathcal{O}_{F}^{\times}$. Then $X_{a\varpi^{-d}}\in\mathfrak{k}_{-d}$. We now apply the technique \eqref{characterdefinition} of the preceding section to $X_{a\varpi^{-d}}$ to obtain a character $\Psi_{X_{a\varpi^{-d}}}$ of $\mathcal{J}_{d}$ given by $\Psi_{X_{a\varpi^{-d}}}(g)=\psi(\mathrm{Tr}(X_{a\varpi^{-d}}(g-I)))$. By direct computation it follows that the centralizer $T(X_{a\varpi^{-d}})$ of $X_{a\varpi^{-d}}$ in $\mathcal{K}$ is equal to $(ZU)\cap \mathcal{K}$ where
$$U=\left\{\begin{pmatrix} 1& \sqrt{\epsilon} b\\0&1\end{pmatrix}\;\middle\vert\;b\in F\right\}.$$
Since $Z\subset\mathcal{K}$, $(ZU)\cap\mathcal{K}=Z(U\cap\mathcal{K})$. For convenience, set $\mathcal{U}:=U\cap\mathcal{K}$. By Proposition~\ref{normalizerofpsiX} the normalizer  of $\Psi_{X_{a\varpi^{-d}}}$ in $\mathcal{K}$ is given by $Z\mathcal{U}\mathcal{K}_{\frac{d}{2}}$. Observe that $\Psi_{X_{a\varpi^{-d}}}$ acts trivially on $Z\mathcal{U}\cap \mathcal{J}_{d}$.

We are interested only in the special case where the character $\zeta$ of $T(X)=Z\mathcal{U}$ is also trivial on $\mathcal{U}$, so that it is entirely determined by its value on $Z$. In this case, let $\theta$ be a character of $Z$ such that $\theta\mid _{Z\cap\mathcal{J}_{d}}=\mathbbm{1}$. We  extend $\theta$ trivially to $\mathcal{U}$ and  write $\Psi_{X_{a\varpi^{-d}}, \theta}$ for the unique character of $Z\mathcal{U}\mathcal{J}_{d}$ that extends $\theta$ and $\Psi_{X_{a\varpi^{-d}}}$. Then from Theorem~\ref{Rep of K} we have that
\begin{equation}\label{nilpotentrep1}
	\mathcal{S}_d(X_{a\varpi^{-d}}, \theta) := \mathrm{Ind}_{Z\mathcal{U}\mathcal{J}_{d}}^\mathcal{K} \Psi_{X_{a\varpi^{-d}}, \theta}
\end{equation}
is an irreducible representation of $\mathcal{K}$ of depth $d$ and of degree $q^{d-1}(q^2-1)$.
\begin{lemma}\label{remark3.3.4}
	Let $a \in \mathcal{O}_{F}^{\times}$. Then $\mathcal{S}_{d}(X_{\varpi^{-d}}, \theta)\cong \mathcal{S}_{d}(X_{a\varpi^{-d}}, \theta).$
\end{lemma}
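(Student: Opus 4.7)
The plan is to exhibit an element $g \in \mathcal{K}$ that conjugates the inducing datum for $\mathcal{S}_d(X_{\varpi^{-d}}, \theta)$ to the one for $\mathcal{S}_d(X_{a\varpi^{-d}}, \theta)$, and then invoke the standard fact that conjugation by an element of the ambient group produces isomorphic induced representations. Since $E/F$ is unramified, the norm map $N_{E/F}\colon \mathcal{O}_E^\times \twoheadrightarrow \mathcal{O}_F^\times$ is surjective; I would choose $c \in \mathcal{O}_E^\times$ with $c\bar c = a$ and set
\[
g \;=\; \begin{pmatrix} c & 0 \\ 0 & \bar c^{-1} \end{pmatrix} \in T_0 \subset \mathcal{K}.
\]
A direct matrix computation gives $g X_{\varpi^{-d}} g^{-1} = X_{c\bar c\cdot\varpi^{-d}} = X_{a\varpi^{-d}}$.

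Next I would verify that $g$ normalizes the inducing subgroup $Z\mathcal{U}\mathcal{J}_d$. The centre $Z$ is fixed pointwise; for $u(b) \in \mathcal{U}$ a direct computation gives $g\, u(b)\, g^{-1} = u(a b) \in \mathcal{U}$; and $\mathcal{J}_d$ is preserved because conjugation by $g$ multiplies the $(1,2)$ entry by $c\bar c \in \mathcal{O}_F^\times$ and the $(2,1)$ entry by $(c\bar c)^{-1} \in \mathcal{O}_F^\times$, both of which preserve the valuation constraints defining $\mathcal{J}_d$. I would then compute the twist $\Psi^g(h) := \Psi(g^{-1}hg)$ of the character $\Psi_{X_{\varpi^{-d}}, \theta}$ on each of the three factors. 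On $Z$ the character is unchanged; on $\mathcal{U}$ both characters are trivial by construction and the twist remains trivial; on $\mathcal{J}_d$ the cyclicity of the trace gives
\[
\mathrm{Tr}\bigl(X_{\varpi^{-d}}(g^{-1}hg - I)\bigr) \;=\; \mathrm{Tr}\bigl(gX_{\varpi^{-d}}g^{-1}(h - I)\bigr) \;=\; \mathrm{Tr}\bigl(X_{a\varpi^{-d}}(h - I)\bigr),
\]
so $\Psi_{X_{\varpi^{-d}}}^{\,g} = \Psi_{X_{a\varpi^{-d}}}$. Combining these three computations yields the key identity
\[
\Psi_{X_{\varpi^{-d}}, \theta}^{\,g} \;=\; \Psi_{X_{a\varpi^{-d}}, \theta}
\]
on all of $Z\mathcal{U}\mathcal{J}_d$.

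Finally, I would apply the standard isomorphism $\mathrm{Ind}_H^{\mathcal{K}} \Psi \cong \mathrm{Ind}_{gHg^{-1}}^{\mathcal{K}} \Psi^{g^{-1}}$, realised concretely by the map $f \mapsto (x \mapsto f(g^{-1}x))$, with $H = Z\mathcal{U}\mathcal{J}_d$ (which is $g$-stable) and $\Psi = \Psi_{X_{\varpi^{-d}}, \theta}$. This gives
\[
\mathcal{S}_d(X_{\varpi^{-d}}, \theta) \;=\; \mathrm{Ind}_{Z\mathcal{U}\mathcal{J}_d}^{\mathcal{K}} \Psi_{X_{\varpi^{-d}}, \theta} \;\cong\; \mathrm{Ind}_{Z\mathcal{U}\mathcal{J}_d}^{\mathcal{K}} \Psi_{X_{\varpi^{-d}}, \theta}^{\,g} \;=\; \mathrm{Ind}_{Z\mathcal{U}\mathcal{J}_d}^{\mathcal{K}} \Psi_{X_{a\varpi^{-d}}, \theta} \;=\; \mathcal{S}_d(X_{a\varpi^{-d}}, \theta),
\]
as required. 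There is no substantive obstacle; the whole argument reduces to the observation that the two nilpotent elements are $T_0$-conjugate (via surjectivity of the norm on unit groups), plus the bookkeeping that the ingredients $Z$, $\mathcal{U}$, $\mathcal{J}_d$ and the chosen extension $\theta$ are all preserved under this conjugation.
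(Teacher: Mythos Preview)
Your proof is correct and follows essentially the same approach as the paper: both arguments rest on the fact that $X_{\varpi^{-d}}$ and $X_{a\varpi^{-d}}$ are $\mathcal{K}$-conjugate (via an element of $T_0$, using surjectivity of the norm on units), and that conjugating the inducing datum by an element of $\mathcal{K}$ yields an isomorphic induced representation. The paper phrases this more tersely by invoking Clifford theory (Theorem~\ref{Cliff1}) together with the $\mathcal{K}$-orbit description in Lemma~\ref{nilpotentKorbits}, whereas you write out the conjugating element explicitly and verify by hand that $Z$, $\mathcal{U}$, $\mathcal{J}_d$, and $\theta$ are all preserved; but the substance is the same.
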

\begin{proof}
	By Theorem~\ref{Cliff1}, conjugation by any $k \in \mathcal{K}$ sends $\mathcal{S}_{d}(X_{\varpi^{-d}}, \theta)$ to $\mathcal{S}_{d}(\mathrm{Ad}(k)X_{\varpi^{-d}}, \theta^{k}).$
	Since $\theta$ is a character of the center $Z$, we have $\theta^{k} = \theta$. Moreover, $X_{\varpi^{-d}}$ and $X_{a\varpi^{-d}}$ lie in the same $\mathcal{K}$-orbit for all $a \in \mathcal{O}_{F}^{\times}$. The result follows.
	\end{proof}

We conclude this section with a lemma that relates the characters $\Psi_{X}$ and $\Psi_{X_{u}}$ in a special but very informative case.

\begin{lemma}\label{d>2rcong}
	Let $X = X(z) + \widetilde{X}(u,v) \in \mathfrak{k}_{-d}$ be such that $\nu(z), \nu(v) > -\lceil \tfrac{d}{2} \rceil$ and $\nu(u) = -d$. Then
	\[
	\Psi_{X} = \Psi_{X_{u}}
	\qquad \text{and} \qquad
	T(X)\mathcal{J}_{d} = T(X_{u})\mathcal{J}_{d} = Z\mathcal{U}\mathcal{J}_{d}.
	\]
\end{lemma}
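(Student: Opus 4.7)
The plan is to prove the two assertions in sequence: first the identity of characters $\Psi_X = \Psi_{X_u}$ on $\mathcal{J}_d$, and then derive the identity $T(X)\mathcal{J}_d = T(X_u)\mathcal{J}_d$ as a formal consequence via the normalizer computations already established. The final equality $T(X_u)\mathcal{J}_d = Z\mathcal{U}\mathcal{J}_d$ is immediate from the explicit computation preceding~\eqref{nilpotentrep1}, where it was shown that the centralizer of $X_u$ in $\mathcal{K}$ is $Z\mathcal{U}$.

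For the equality of characters, I would first write $X - X_u = X(z) + \widetilde{X}(0,v)$, so that $X - X_u$ has $(1,1)$ and $(2,2)$ entries equal to $z\sqrt{\epsilon}$, $(2,1)$ entry equal to $v\sqrt{\epsilon}$, and $(1,2)$ entry equal to zero. Then for any $g \in \mathcal{J}_d$, a direct expansion gives
\[
\mathrm{Tr}\bigl((X-X_u)(g-I)\bigr) \;=\; z\sqrt{\epsilon}\,\bigl((g-I)_{11}+(g-I)_{22}\bigr) + v\sqrt{\epsilon}\,(g-I)_{12}.
\]
By the definition of $\mathcal{J}_d$, the entries $(g-I)_{11}, (g-I)_{12}, (g-I)_{22}$ all lie in $\mathfrak{p}_E^{\lceil d/2 \rceil}$, while the hypotheses $\nu(z), \nu(v) > -\lceil d/2 \rceil$ give $\nu(z), \nu(v) \geq -\lceil d/2 \rceil + 1$. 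Both summands therefore have valuation at least $1$, so the trace lies in $\mathfrak{p}_E$, where $\psi$ is trivial. This yields $\Psi_X(g) = \Psi_{X_u}(g)$ for every $g \in \mathcal{J}_d$.

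Once the characters coincide, their normalizers in $\mathcal{K}$ agree. For $d$ odd we have $\mathcal{J}_d = \mathcal{K}_{d/2}$ and Proposition~\ref{normalizerofpsiX} directly identifies this common normalizer as $T(X)\mathcal{J}_d$ for $\Psi_X$ and as $T(X_u)\mathcal{J}_d$ for $\Psi_{X_u}$. For $d$ even, Proposition~\ref{normalizerofpsiX} confines the normalizer in $\mathcal{K}$ to $T(X)\mathcal{K}_{d/2}$ (using that any $\mathcal{K}$-element normalizing $\Psi_X$ on $\mathcal{J}_d$ also normalizes its restriction to the smaller subgroup $\mathcal{K}_{d/2+}$), and Lemma~\ref{normalizerJd} then cuts this down to $T(X)\mathcal{J}_d$; the analogous chain applied to $X_u$ gives $T(X_u)\mathcal{J}_d$. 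Equating the two normalizers produces $T(X)\mathcal{J}_d = T(X_u)\mathcal{J}_d$, and combined with $T(X_u) = Z\mathcal{U}$ this completes the proof.

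The routine-but-error-prone step is the valuation bookkeeping in the trace computation, in particular keeping track of the asymmetric index $\lceil (d+1)/2 \rceil$ appearing in the $(2,1)$ entry of $\mathcal{J}_d$ versus $\lceil d/2 \rceil$ elsewhere; the point is that since the $(2,1)$ entry of $X - X_u$ multiplies $(g-I)_{12}$ (not $(g-I)_{21}$) in the diagonal of the product, only the symmetric $\lceil d/2 \rceil$ bound is needed, and the hypotheses on $\nu(z), \nu(v)$ are tight for this.
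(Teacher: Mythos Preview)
Your proposal is correct and follows essentially the same approach as the paper: the character identity is established by the same trace/valuation argument (the paper phrases it simply as ``$X \equiv X_u$ entry-wise modulo $\mathfrak{p}_E^{-\lceil d/2\rceil}$''), and the group identity is then deduced by invoking Proposition~\ref{normalizerofpsiX} and Lemma~\ref{normalizerJd} to identify both sides with the common normalizer of $\Psi_X=\Psi_{X_u}$. Your version is in fact slightly more careful than the paper's in tracking the ambient group for the normalizer (working in all of $\mathcal{K}$ rather than in $T(X)\mathcal{K}_{d/2}$), which sidesteps a small implicit step in the paper's argument.
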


\begin{proof}
	Since $X \equiv X_{u}$ entry-wise modulo $\mathfrak{p}_{E}^{-\lceil d/2 \rceil}$, it follows that $\Psi_{X} = \Psi_{X_{u}}$ as characters of $\mathcal{J}_{d}$. Hence, they have the same normalizer in $T(X)\mathcal{K}_{\frac{d}{2}}$. By Proposition~\ref{normalizerofpsiX} and Lemma~\ref{normalizerJd}, we obtain
	\[
	N_{T(X)\mathcal{K}_{\frac{d}{2}}}(\Psi_{X}) = T(X)\mathcal{J}_{d}
	\qquad \text{and} \qquad
	N_{T(X)\mathcal{K}_{\frac{d}{2}}}(\Psi_{X_{u}}) = T(X_{u})\mathcal{J}_{d} = Z\mathcal{U}\mathcal{J}_{d},
	\]
	which proves the lemma.
\end{proof}

\section{An explicit description of $\mathcal{W}_{d,\chi}$}\label{an explicit decomposition}
In \S\ref{a canonical decomposition} we obtained a canonical decomposition of $\mathrm{Ind}_{\mathcal{B}}^{\mathcal{K}}\chi$.
For $d \geq r+1$, however, the representations $\mathcal W_{d,\chi}$ appear only as quotients of certain induced representations. In this section, we show that  each of these representations are isomorphic to one of the explicit representations that we constructed in Theorem~\ref{Rep of K}.

We now state the main decomposition theorem that we prove in this section.

\begin{theorem}\label{prop2}
	Let $\chi$ be a character of $T$ of minimal depth $r\in \mathbb{Z}_{\geq 0}$. Then for each $d\geq r+1$, there exists $Y_{\chi}\in\mathfrak{k}_{-d}$, and $\zeta_{\chi}$ a character of the centralizer of $Y_{\chi}$ in $\mathcal{K}$, satisfying $	\zeta_{\chi}|_{T(Y_{\chi})\cap \mathcal{J}_{d}}=\Psi_{Y_{\chi}}|_{T(Y_{\chi})\cap \mathcal{J}_{d}},$ such that
	$$\mathcal{W}_{d, \chi}\cong \mathcal{S}_{d}(Y_{\chi}, \zeta_{\chi}).$$
\end{theorem}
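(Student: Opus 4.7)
The strategy is to leverage the uniqueness of depth-$d$ components in the canonical decomposition of Theorem~\ref{thm2}: since $\mathcal{W}_{d,\chi}$ is the unique irreducible summand of $\pi_{\chi}|_{\mathcal{K}}$ of depth $d$ and degree $(q^{2}-1)q^{d-1}$, and each $\mathcal{S}_{d}(Y,\zeta)$ produced by Theorem~\ref{Rep of K} has precisely this depth and degree, it suffices to produce a pair $(Y_{\chi},\zeta_{\chi})$ for which $\mathcal{S}_{d}(Y_{\chi},\zeta_{\chi})$ embeds into $\pi_{\chi}|_{\mathcal{K}}$: any nonzero such embedding must land inside the unique depth-$d$ component, yielding the desired isomorphism. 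By Frobenius reciprocity,
\[
\Hom_{\mathcal{K}}\bigl(\mathcal{S}_{d}(Y_{\chi},\zeta_{\chi}),\,\pi_{\chi}|_{\mathcal{K}}\bigr)
\;\cong\;
\Hom_{T(Y_{\chi})\mathcal{J}_{d}}\bigl(\Psi_{Y_{\chi},\zeta_{\chi}},\,\pi_{\chi}|_{T(Y_{\chi})\mathcal{J}_{d}}\bigr),
\]
so the task reduces to exhibiting a nonzero vector $f\in V_{\chi}$ on which $T(Y_{\chi})\mathcal{J}_{d}$ acts by the character $\Psi_{Y_{\chi},\zeta_{\chi}}$.

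The first step is to construct $Y_{\chi}\in\mathfrak{k}_{-d}$ from the datum of $\chi$. When $\chi$ has positive depth $r$, the Moy--Prasad isomorphism identifies $\chi|_{T_{r}}$ with a linear functional on the torus Lie algebra; lifting this functional yields a diagonal element $X(z)\in\mathfrak{k}_{-r}\subset\mathfrak{k}_{-d}$, and I form $Y_{\chi}=X(z)+\widetilde{X}(u,v)$ as in~\eqref{element X} with $\nu(u)=-d$ and $\nu(v)>-d$, so that $Y_{\chi}$ has depth exactly $d$. When $\chi$ has depth zero, the diagonal part is absent and $Y_{\chi}$ is taken to be the nilpotent element $X_{\varpi^{-d}}$ of Lemma~\ref{nilpotentKorbits}; the character $\zeta_{\chi}$ is then chosen to be the central character of $\pi_{\chi}$ extended trivially along $\mathcal{U}$, recovering the construction of~\eqref{nilpotentrep1}.

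To exhibit the eigenvector $f$, I use the decomposition of $V_{\chi}^{\mathcal{K}_{d+1}}\cong\mathrm{Ind}_{\mathcal{B}\mathcal{K}_{d+1}}^{\mathcal{K}}\chi$ into one-dimensional subspaces supported on single $\mathcal{B}\mathcal{K}_{d+1}$-cosets, indexed by the representatives of Proposition~\ref{double coset for principal series}. I take $f$ to be the normalized characteristic function of the coset $\mathcal{B}g\mathcal{K}_{d+1}$ for a representative $g=\begin{psmallmatrix}1&0\\ \sqrt{\epsilon}\varpi^{k}&1\end{psmallmatrix}$ with $k$ chosen in coordination with the nilpotent parameters of $Y_{\chi}$. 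For $x\in T(Y_{\chi})\mathcal{J}_{d}$, the action of $x$ on $f$ is then scalar multiplication by $\chi$ evaluated on the Iwasawa $\mathcal{B}$-part of the conjugate $gxg^{-1}$, provided that conjugate lies in $\mathcal{B}\mathcal{K}_{d+1}$. The parameters $z,u,v$ of $Y_{\chi}$ are calibrated so that this scalar equals $\Psi_{Y_{\chi}}(x)$ on $\mathcal{J}_{d}$, and $\zeta_{\chi}$ is then \emph{defined} to be the resulting character of $T(Y_{\chi})$; the compatibility $\zeta_{\chi}|_{T(Y_{\chi})\cap\mathcal{J}_{d}}=\Psi_{Y_{\chi}}|_{T(Y_{\chi})\cap\mathcal{J}_{d}}$ is then built in by construction.

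The main technical obstacle is this calibration step: one must match, entry by entry, the Iwasawa $\mathcal{B}$-part of $gxg^{-1}$ against the trace-form expression defining $\Psi_{Y_{\chi}}$, for all $x$ in both $T(Y_{\chi})$ (whose shape is described by~\eqref{definitionofTofX}) and $\mathcal{J}_{d}$. The computation reuses the conjugation expansions of the form $g_{k}^{-1}bg_{k}$ carried out in the proof of Proposition~\ref{dimension1principalseries}, and the bookkeeping must be separated according to the parity of $d$, since for even $d$ the group $\mathcal{J}_{d}$ is strictly smaller than $\mathcal{K}_{d/2}$. Once the embedding is established, the uniqueness asserted in Theorem~\ref{thm2} completes the identification $\mathcal{W}_{d,\chi}\cong\mathcal{S}_{d}(Y_{\chi},\zeta_{\chi})$.
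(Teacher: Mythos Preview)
Your high-level strategy --- produce $(Y_\chi,\zeta_\chi)$, show $\mathcal{S}_d(Y_\chi,\zeta_\chi)$ embeds in $\pi_\chi|_{\mathcal{K}}$, then invoke the uniqueness of the depth-$d$ piece from Theorem~\ref{thm2} --- is exactly the paper's, but the execution has two genuine gaps.

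First, the element realizing $\chi|_{T_{r/2+}}$ is \emph{not} of the form $X(z)$: it is $\Gamma=\begin{psmallmatrix}x&0\\0&-\overline{x}\end{psmallmatrix}$ with $x=x_1+x_2\sqrt{\epsilon}$, and the crucial non-central part $x_1$ (which is nonzero precisely because $\chi$ has minimal depth $r$) cannot be absorbed into the scalar $X(z)$ of~\eqref{element X}. The paper's key move is not to add an arbitrary $\widetilde{X}(u,v)$ to a scalar, but to \emph{conjugate} $\Gamma$ by an explicit $g_d\in G$ so that $Y_\chi:=g_d\Gamma g_d^{-1}$ lands in the required shape, with $u=\epsilon^{-1}\varpi^{-d}$ and $v=x_1^2\varpi^{d}$ forced by matching eigenvalues; then $\zeta_\chi:=\chi^{g_d}$ is automatically a character of $T(Y_\chi)=T^{g_d}\cap\mathcal{K}$ and the compatibility on $T(Y_\chi)\cap\mathcal{J}_d$ drops out of~\eqref{eq:chi on T}. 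Your ``calibration'' paragraph does not supply this, and without it there is no reason the $\zeta_\chi$ you define should land in the class of characters covered by Theorem~\ref{Rep of K}.

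Second, the eigenvector you propose cannot exist. By the degree computations in Lemma~\ref{dimensionofVchiKn} and Theorem~\ref{Rep of K}, one has $[\mathcal{K}:T(Y_\chi)\mathcal{J}_d]=(q^2-1)q^{d-1}<(q+1)q^d=[\mathcal{K}:\mathcal{B}\mathcal{K}_{d+1}]$, so $T(Y_\chi)\mathcal{J}_d$ is strictly larger than any conjugate of $\mathcal{B}\mathcal{K}_{d+1}$ and therefore cannot stabilize a single coset $\mathcal{B}g\mathcal{K}_{d+1}$; the characteristic function of such a coset is never a $T(Y_\chi)\mathcal{J}_d$-eigenvector. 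The paper avoids this by running Mackey in the other direction, computing $\Hom_{\mathcal{K}}(V_\chi^{\mathcal{K}_{d+1}},\mathcal{S}_d(Y_\chi,\zeta_\chi))$ as a sum over $\mathcal{B}\mathcal{K}_{d+1}\backslash\mathcal{K}/T(Y_\chi)\mathcal{J}_d$ and checking that $\chi$ agrees with $\Psi_{Y_\chi,\zeta_\chi}^{\alpha}$ only on the \emph{intersection} $\mathcal{B}\cap(T(Y_\chi)\mathcal{J}_d)^{\alpha}$ for the single representative $\alpha=\begin{psmallmatrix}1&0\\-\gamma&1\end{psmallmatrix}$; the vanishing of the $(2,1)$-entry in~\eqref{6.6} is then a constraint on the intersection, not an obstruction.
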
	
For all $d\geq r+1$,  our strategy is to first identify elements $Y_{\chi}$ and $\zeta_{\chi}$ such that $\mathcal{S}_{d}(Y_{\chi}, \zeta_{\chi})$ is a well-defined irreducible representation in the sense of Theorem~\ref{Rep of K} and the homomorphism space
\begin{equation}\label{principalseriesYchiidentification}\mathrm{Hom}_{\mathcal{K}}(V_{\chi}^{\mathcal{K}_{d+1}}, \mathcal{S}_{d}(Y_{\chi}, \zeta_{\chi}))\end{equation}
is not equal to zero, \emph{i.e.}, $\mathcal{S}_{d}(Y_{\chi}, \zeta_{\chi})$ is a subrepresentation of $V_{\chi}^{\mathcal{K}_{d+1}}$.

We now work toward identifying \( Y_{\chi} \) and \( \zeta_{\chi} \). Determining these objects is a challenging task, and a major portion of this section is devoted to that goal. Applying Frobenius reciprocity and Mackey theory to~\eqref{principalseriesYchiidentification} we obtain
\begin{align*}
	\mathrm{Hom}_{\mathcal{K}}(\mathrm{Ind}_{\mathcal{B}\mathcal{K}_{d+1}}^{\mathcal{K}}\chi, \mathrm{Ind}_{T(Y_{\chi})\mathcal{J}_{d}}^{\mathcal{K}}\Psi_{Y_{\chi}, \zeta_{\chi}})\cong&\bigoplus_{\alpha\in \mathcal{B}\mathcal{K}_{d+1}\backslash \mathcal{K}/ T(Y_{\chi})\mathcal{J}_{d}}\mathrm{Hom}_{\mathcal{B}\mathcal{K}_{d+1}\cap (T(Y_{\chi})\mathcal{J}_{d})^{\alpha} }(\chi, \Psi_{Y_{\chi}, \zeta_{\chi}}^{\alpha}).
\end{align*}	
Let $hk\in \mathcal{B}\mathcal{K}_{d+1}\cap (T(Y_{\chi})\mathcal{J}_{d})^{\alpha}$ where $h\in \mathcal{B}$ and $k\in\mathcal{K}_{d+1}$, and let $\alpha\in \mathcal{B}\mathcal{K}_{d+1}\backslash \mathcal{K}/ T(Y_{\chi})\mathcal{J}_{d}$. Note that $\chi$ is trivial on $\mathcal{K}_{d+1}$ by definition; therefore $\chi(hk)=\chi(h)$. Since $\mathcal{K}_{d+1}\trianglelefteq\mathcal{K}$, and $\Psi_{Y_{\chi}, \zeta_{\chi}}$ has depth $d$, we have $\alpha^{-1}k\alpha\in\mathcal{K}_{d+1}$ and $\Psi_{Y_{\chi}, \zeta_{\chi}}^{\alpha}(k)=\Psi_{Y_{\chi}, \zeta_{\chi}}^{\alpha}(\alpha^{-1}k\alpha)=1$. Thus it suffices to show that for some choice of coset representative $\alpha$, and  $Y_{\chi}, \zeta_{\chi}$,
$$\chi(h)=\Psi_{Y_{\chi}, \zeta_{\chi}}^{\alpha}(h),\hspace{2em}\mathrm{for\;all}\;h\in\mathcal{B}\cap(T(Y_{\chi})\mathcal{J}_{d})^{\alpha}.$$
Note that if $h\in\mathcal{B}\cap(T(Y_{\chi})\mathcal{J}_{d})^{\alpha}$, then $h=\alpha t\alpha^{-1}\alpha j \alpha^{-1}$ for some $t\in T(Y_{\chi})$, and $j\in\mathcal{J}_{d}$. We now evaluate $\Psi_{Y_{\chi}, \zeta_{\chi}}^{\alpha}(h)=\Psi_{Y_{\chi}, \zeta_{\chi}}(tj)=\zeta_{\chi}(t)\Psi_{Y_{\chi}}(j)$. If there exists a choice of $\alpha$ such that $\alpha t \alpha^{-1}$ is upper triangular, then $h$ being upper triangular forces $\alpha j \alpha^{-1}$ to be upper triangular. For such a choice of $\alpha$ we would have $\chi(h)=\chi(\alpha t \alpha^{-1})\chi(\alpha j \alpha^{-1})$. Thus it suffices to find $Y_{\chi}, \zeta_{\chi}$  such that $\mathcal{S}_{d}(Y_{\chi}, \zeta_{\chi})$ is a well-defined and
\begin{equation}\label{mainequationtocheck}
	\zeta_{\chi}(t)=\chi(\alpha t\alpha^{-1}),\;\;\;\mathrm{and}\;\;\; \Psi_{Y_{\chi}}(j)=\chi(\alpha j \alpha^{-1}),\end{equation}
for some choice of coset representative $\alpha$ that makes $t$ upper triangular whenever  $h=\alpha t \alpha^{-1}\alpha j \alpha^{-1}\in \mathcal{B}\cap(T(Y_{\chi})\mathcal{J}_{d})^{\alpha}$, where $j\in \mathcal{J}_{d}$.

We first assume that $\chi$ has depth $r>0$.   Then $\chi\mid_{T_{\frac{r}{2}+}/ T_{r+}}$ is realized by an element $\Gamma=\begin{pmatrix}
	x&0\\0&-\overline{x}
\end{pmatrix}\in\mathfrak{t}_{-r}/\mathfrak{t}_{\frac{-r}{2}}$ with $x\in\mathfrak{p}_{E}^{-r}$, \emph{i.e.,} for all $t\in T_{\frac{r}{2}+}/ T_{r+} $ we have
\begin{equation}\label{Shalika1}
	\chi(t)=\psi(\mathrm{Tr}(\Gamma e^{-1}(t))).
\end{equation}
where  $e:\mathfrak{t}_{\frac{r}{2}+}/\mathfrak{t}_{r+}\rightarrow T_{\frac{r}{2}+}/ T_{r+}$ denotes the Moy--Prasad isomorphism.
	In particular, for $t\in T_{\frac{r}{2}+}/ T_{r+} $ we have
	\begin{equation}\label{eq:chi on T}
		\chi(t)=\psi(\mathrm{Tr}(\Gamma(t-I))).
	\end{equation}
Let  $d\geq r+1$. Recall that in ~\S\ref{chapter3}, $\zeta_{\chi}$ was the character of the centralizer of some element $Y_{\chi}\in \mathfrak{k}_{-d}$ having the given form. If we can find $Y_{\chi}$ such that it is conjugate to $\Gamma$ by an element of $G$, then we can easily relate the characters $\chi$ and $\zeta_{\chi}$. Since $\Gamma$ is a diagonal matrix we just need to equate the eigenvalues of $\begin{pmatrix}z\sqrt{\epsilon}&u\sqrt{\epsilon}\\v\sqrt{\epsilon}&z\sqrt{\epsilon}\end{pmatrix}$ with that of $\Gamma$. Therefore we should have
	$$z\sqrt{\epsilon}+\sqrt{uv\epsilon}=x_1+x_2\sqrt{\epsilon},\;\; z\sqrt{\epsilon}-\sqrt{uv\epsilon}=-x_1+x_2\sqrt{\epsilon}.$$
	Solving this yields, $z=x_2$, and $uv\epsilon=x_1^{2}$. We also want $\nu(v)>\nu(u)=-d$, and $ \nu(z)\geq-d$. Therefore, one possible choice is  to take $u=\epsilon^{-1}\varpi^{-d}$, and $v=x_{1}^{2}\varpi^{d}$.
	Since $d>r$, and $x_{i}\in \mathfrak{p}_{E}^{-r}$, the above choices of $u$ and $v$ satisfy the valuation conditions. Thus we set
	$$Y_{\chi}:=\begin{pmatrix}
		x_2\sqrt{\epsilon}& \epsilon^{-1}\varpi^{-d}\sqrt{\epsilon}\\ x_1^{2}\varpi^{d}\sqrt{\epsilon}&x_2\sqrt{\epsilon}  
	\end{pmatrix}\in\mathfrak{k}_{-d}.$$
	Let $\gamma=x_1\varpi^{d}\sqrt{\epsilon}\in\mathfrak{p}_{E}^{d-r}$. Then a matrix $g_{d}$ in $G$ that conjugates $Y_{\chi}$ to $\Gamma$ is given by $$g_{d}:=\begin{pmatrix}
		1&-\frac{1}{2}\gamma^{-1}\\\gamma&\frac{1}{2} 
	\end{pmatrix}\in G,$$ and we have $Y_{\chi}=\Gamma^{g_{d}}$. Since the centralizer of $\Gamma$ in $G$ is $T$, the centralizer $T(Y_{\chi})$ of $Y_{\chi}$ in $\mathcal{K}$ equals to the centralizer of $\Gamma^{g_{d}}$ in $\mathcal{K}$ which is $T^{g_{d}}\cap\mathcal{K}$.  As $\chi$ is a character of $T$, $\zeta_{\chi}:=\chi^{g_d}$ gives a character of $T(Y_{\chi})$. 
	A direct computation gives that
	$$T(Y_{\chi})=\left\{\begin{pmatrix} a&b\\ b\gamma^{2}&a\end{pmatrix}\mid \overline{a}a+\overline{b}b\gamma^2=1, \overline{a}b\in \sqrt{\epsilon}F\right\}\cap \mathcal{K}.$$
	We choose the coset representative $\alpha=\begin{pmatrix} 1&0\\ -\gamma&1\end{pmatrix}$. Then for any $t=\begin{pmatrix} a& b\\ b\gamma^2&a\end{pmatrix}\in T(Y_{\chi})$, we have
	$$\alpha t \alpha^{-1}=\begin{pmatrix}a+b\gamma & b\\ 0& a-b\gamma\end{pmatrix}$$
	and $$\chi(\alpha t\alpha^{-1})=\chiE(a+b\gamma)=\chi^{g_{d}}(t)=\zeta_{\chi}(t),$$
	verifying the first relation in~\eqref{mainequationtocheck}.
	
	Now let $j=\begin{pmatrix}
		1+c\varpi^{\lceil\frac{d}{2}\rceil}& y\varpi^{\lceil\frac{d}{2}\rceil}\\ z\varpi^{\lceil\frac{d+1}{2}\rceil}&1-\overline{c}\varpi^{\lceil\frac{d}{2}\rceil}
	\end{pmatrix}  \in\mathcal{J}_{d}$,  where $ c, y, z\in\mathcal{O}_{E}$ such that  $h=\alpha t\alpha^{-1}\alpha j \alpha^{-1} \in\mathcal{B}\cap(T(Y_{\chi})\mathcal{J}_{d})^{\alpha}$ where $t\in T(Y_{\chi})$. Then
	$$\alpha j \alpha^{-1}=\begin{pmatrix}1+c\varpi^{\lceil\frac{d}{2}\rceil}+y\gamma\varpi^{\lceil\frac{d}{2}\rceil}&y\varpi^{\lceil\frac{d}{2}\rceil}\\-c\gamma\varpi^{\lceil\frac{d}{2} \rceil}+z\varpi^{\lceil\frac{d+1}{2} \rceil}-y\gamma^2\varpi^{\lceil\frac{d}{2} \rceil}-\overline{c}\gamma\varpi^{\lceil\frac{d}{2}\rceil}&-y\gamma\varpi^{\lceil\frac{d}{2} \rceil}+1-\overline{c}\varpi^{\lceil\frac{d}{2}\rceil}
	\end{pmatrix}.$$
	Since $h$ and $\alpha t\alpha^{-1}$ are upper triangular,  $\alpha j\alpha^{-1}$ must be upper triangular as well. Therefore, the $(2, 1)$ matrix entry of $\alpha j \alpha^{-1}$ vanishes; that is,
	\begin{equation}\label{6.6}
		-c\gamma\varpi^{\lceil\frac{d}{2} \rceil}+z\varpi^{\lceil\frac{d+1}{2} \rceil}-y\gamma^2\varpi^{\lceil\frac{d}{2} \rceil}-\overline{c}\gamma\varpi^{\lceil\frac{d}{2}\rceil}=0.
	\end{equation}
	Multiplying~\eqref{6.6} by $\gamma^{-1}x_1$, and grouping the terms with $c$ together we obtain
	\begin{equation}\label{equation:simplication11}
		(c+\overline{c})x_1\varpi^{\lceil\frac{d}{2} \rceil}+x_1y\gamma\varpi^{\lceil\frac{d}{2} \rceil}-\epsilon^{-1}z\sqrt{\epsilon}\varpi^{\lceil\frac{d+1}{2} \rceil-d}=0.
	\end{equation}
	Since $1+c\varpi^{\lceil \frac{d}{2}\rceil}+y\gamma \varpi^{\lceil \frac{d}{2} \rceil}\in\mathfrak{p}_{E}^{\frac{d}{2}}\subseteq \mathfrak{p}_{E}^{\frac{r}{2}+}$, applying~\eqref{eq:chi on T} we obtain
	\begin{equation}
		\chi(\alpha j \alpha^{-1})=\psi(\mathrm{Tr}(\Gamma(\alpha j\alpha^{-1}-I)))=\Psi_{\Gamma^{\alpha^{-1}}}(j).
	\end{equation}	
	From~\eqref{equation:simplication11} it follows that
	\begin{align*}
		\psi(\mathrm{Tr}(\Gamma^{\alpha^{-1}}-Y_{\chi})(j-I))&=\psi((c+\overline{c})x_1\varpi^{\lceil\frac{d}{2} \rceil}+x_1y\gamma\varpi^{\lceil\frac{d}{2} \rceil}-\epsilon^{-1}z\sqrt{\epsilon}\varpi^{\lceil\frac{d+1}{2} \rceil-d})=\psi(0)=1.
	\end{align*}	
	Hence $\chi(\alpha j \alpha^{-1})=\Psi_{Y_{\chi}}(j)$, thus verifying the second relation in~\eqref{mainequationtocheck}.

	It remains to show that $\Psi_{Y_{\chi}}$, and $\zeta_{\chi}$ agree on the intersection $T(Y_{\chi})\cap \mathcal{J}_{d}$.  	Indeed, let $h:=\begin{pmatrix} a&b\\ b\gamma^{2}&a\end{pmatrix}\in T(Y_{\chi})\cap\mathcal{J}_{d}$. Then 
	$g_{d}^{-1}hg_{d}=\begin{pmatrix}a+b\gamma &0\\ 0&a-b\gamma\end{pmatrix}\in T_{\lceil \frac{d}{2} \rceil}\subset T_{\frac{r}{2}+}.$ Therefore, applying~\eqref{eq:chi on T} we obtain
	$$\zeta_{\chi}(h)=\chi^{g_{d}}(h)=\chi(g_{d}^{-1}hg_{d})=\psi(\mathrm{Tr}(\Gamma(g_{d}^{-1}hg_{d}-I)))=\psi(\mathrm{Tr}(g_{d}\Gamma g_{d}^{-1}(h-I)))=\Psi_{Y_{\chi}}(h).$$
Let $\Psi_{Y_{\chi}, \zeta_{\chi}}$ denote the unique extension of $\zeta_{\chi}$ and $\Psi_{Y_{\chi}}$ to a character of $T(Y_{\chi})\mathcal{J}_{d}$. Then from Theorem~\ref{Rep of K} it follows that 
	\begin{equation}\label{prinshalika1}
		\mathcal{S}_{d}(Y_{\chi}, \zeta_{\chi})=\mathrm{Ind}_{T(Y_{\chi})\mathcal{J}_{d}}^{\mathcal{K}}\Psi_{Y_{\chi}, \zeta_{\chi}}
	\end{equation} 	
is an irreducible representation of $\mathcal{K}$ of depth $d$ and degree $(q^{2}-1)q^{d-1}$. Thus, given a character $\chi$ of minimal depth $r\in\mathbb{Z}_{>0}$, for each $d\geq r+1$, we have constructed an irreducible  representation $\mathcal{S}_{d}(Y_{\chi}, \zeta_{\chi})$
	which is a subrepresentation of $V_{\chi}^{\mathcal{K}_{d+1}}$.
	
	We now turn our attention to the case when $\chi$ has depth-zero. In this case, there are no elements of $\mathfrak{t}$ realizing $\chi$. Instead, for all $d\geq 1$, we take $$Y_{\chi}=X_{\varpi^{-d}}=\begin{pmatrix}0& \varpi^{-d}\sqrt{\epsilon}\\0&0\end{pmatrix},$$	and set $\zeta_{\chi}=\theta$, where $\theta=\chi\mid_{Z}$ is the central character of $\pi_{\chi}$. In this case, the centralizer of $X_{\varpi^{-d}}$ in $\mathcal{K}$ equals $Z\mathcal{U}\mathcal{J}_{d}$, and $\zeta_{\chi}\mid_{Z\mathcal{U}\cap\mathcal{J}_{d}}=\Psi_{Y_{\chi}}\mid_{Z\mathcal{U}\cap\mathcal{J}_{d}}=\mathbbm{1} $; thus applying ~\eqref{nilpotentrep1} we obtain that 
	\begin{equation}\label{prinshalika2}\mathcal{S}_{d}(X_{\varpi^{-d}}, \theta)=\mathrm{Ind}_{Z\mathcal{U}\mathcal{J}_{d}}^{\mathcal{K}}\Psi_{X_{\varpi^{-d}}, \theta}\end{equation} is an irreducible representation of $\mathcal{K}$ of depth $d$ and degree $(q^{2}-1)q^{d-1}$. Moreover, if $h\in \mathcal{B}\cap Z\mathcal{U}\mathcal{J}_{d} $, then $h=tj$, where $t\in Z\mathcal{U} $ and $j$ is an upper triangular matrix in $\mathcal{J}_{d}$ on which $\chi$ is therefore trivial. Hence we have  
		$$\chi(h)=\chi(t)\chi(j)=\chi(t)=\zeta_{\chi}(t)=\Psi_{X_{\varpi^{-d}}, \theta}(h).$$
Hence, $S_{d}(X_{\varpi^{d}}, \theta)$ is a subrepresentation of $V_{\chi}^{\mathcal{K}_{d}}$.

		\begin{proof}[Proof of the main theorem]
			To show the required isomorphism of $\mathcal{K}$-representations, we begin by recalling that we have already established the decomposition  
			\[
			V_{\chi}^{\mathcal{K}_{d+1}} = V_{\chi}^{\mathcal{K}_{d}} \oplus \mathcal{W}_{d, \chi}.
			\]  
			Since any irreducible component of \( V_{\chi}^{\mathcal{K}_{d}} \) must have depth at most \( d-1 \), while \( S_{d}(Y_{\chi}, \zeta_{\chi}) \) has depth exactly \( d \), it follows that \( S_{d}(Y_{\chi}, \zeta_{\chi}) \) cannot be isomorphic to a subrepresentation of \( V_{\chi}^{\mathcal{K}_{d}} \) and must therefore be isomorphic to a subrepresentation of \( \mathcal{W}_{d, \chi} \). But \( \mathcal{W}_{d, \chi} \) is irreducible and has the same dimension as \( S_{d}(Y_{\chi}, \zeta_{\chi}) \), hence we conclude that  
			\[
			\mathcal{W}_{d, \chi} \cong S_{d}(Y_{\chi}, \zeta_{\chi}).
			\]
		\end{proof}

		The following is now an immediate corollary of Lemma~\ref{principalseries2}, and Theorems~\ref{thm2} and~\ref{prop2}.
		
		\begin{corollary}\label{thm3}
			Let $\chi$ be a character of \( T \) of minimal depth \( r \in \mathbb{Z}_{\geq 0} \). Then the restriction of the principal series representation \( \pi_{\chi} \) to $\mathcal{K}$ decomposes as direct sum of irreducible representations of $\mathcal{K}$ as follows
			\begin{equation*}
				\mathrm{Res}_{\mathcal{K}} \pi_{\chi} \cong
				\begin{cases}
					\mathrm{Ind}_{\mathcal{B}\mathcal{K}_{r+1}}^{\mathcal{K}} \chi \oplus \displaystyle\bigoplus_{d \geq r+1} \mathcal{S}_{d}(Y_{\chi}, \zeta_{\chi}) & \text{if } r > 0, \\
					\mathrm{Ind}_{\mathcal{B}\mathcal{K}_{1}}^{\mathcal{K}} \chi \oplus \displaystyle\bigoplus_{d \geq 1} \mathcal{S}_{d}(X_{\varpi^{-d}}, \theta) & \text{if } r = 0 \text{ and } \chi \neq \mathbbm{1}, \\
					\mathbbm{1}_{q} \oplus \mathrm{St}_{q} \oplus \displaystyle\bigoplus_{d \geq 1} \mathcal{S}_{d}(X_{\varpi^{-d}}, \mathbbm{1}) & \text{if } r = 0 \text{ and } \chi = \mathbbm{1}.
				\end{cases}
			\end{equation*}
			Here, $Y_{\chi} = \Gamma^{g_d}, \quad \text{and} \quad \zeta_{\chi} = \chi^{g_d},$ where \( \Gamma = \begin{pmatrix} x & 0 \\ 0 & -\overline{x} \end{pmatrix} \in \mathfrak{t}_{-r}/\mathfrak{t}_{-r/2} \) is an element that realizes \( \chi \) on \( T_{r/2+} / T_{r+} \), and \[g_d = \begin{pmatrix}
				1 & -\frac{1}{2} \gamma^{-1} \\
				\gamma & \frac{1}{2}
			\end{pmatrix}\in G,\; \text{with } \gamma = \frac{(x + \overline{x}) \varpi^{d} \sqrt{\epsilon}}{2}.\]
		\end{corollary}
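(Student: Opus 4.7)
The corollary is essentially a bookkeeping result: it collates the canonical decomposition of Theorem~\ref{thm2}, the identification of the low-depth piece as an induced representation from Lemma~\ref{principalseriescharacterreduction6}, the explicit identification of the successive quotients from Theorem~\ref{prop2}, and the decomposition of $V_{\mathbbm{1}}^{\mathcal{K}_{1}}$ from Lemma~\ref{principalseries2}. So the plan is really to splice these four inputs together in the right order for each of the three cases on the right-hand side.

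First, I would invoke Theorem~\ref{thm2} to write
\[
\mathrm{Res}_{\mathcal{K}}\pi_{\chi} \;\cong\; V_{\chi}^{\mathcal{K}_{r+1}} \;\oplus\; \bigoplus_{d \ge r+1} \mathcal{W}_{d,\chi},
\]
valid for every character $\chi$ of minimal depth $r \ge 0$. Then I would identify the first summand: by Lemma~\ref{principalseriescharacterreduction6}, we have $V_{\chi}^{\mathcal{K}_{r+1}} \cong \mathrm{Ind}_{\mathcal{B}\mathcal{K}_{r+1}}^{\mathcal{K}}\chi$, which handles the low-depth part in the $r > 0$ case and in the $r=0$, $\chi \ne \mathbbm{1}$ case. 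In the remaining case $r=0$, $\chi=\mathbbm{1}$, Lemma~\ref{principalseries2} further refines this induced representation into $\mathbbm{1}_{q}\oplus \mathrm{St}_{q}$, giving the two isotypic pieces that appear on the right-hand side of the third branch.

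Next, for each $d \ge r+1$, I would apply Theorem~\ref{prop2} to replace the abstractly defined quotient $\mathcal{W}_{d,\chi}$ with the explicit irreducible representation $\mathcal{S}_{d}(Y_{\chi},\zeta_{\chi})$ of $\mathcal{K}$. For $r>0$, the elements $Y_{\chi}=\Gamma^{g_{d}}$ and $\zeta_{\chi}=\chi^{g_{d}}$ are precisely the ones constructed in the proof of Theorem~\ref{prop2}, with $\Gamma$ realizing $\chi$ on $T_{r/2+}/T_{r+}$ and $g_{d}$ the conjugating element already written in the statement. For $r=0$ (and $\chi$ arbitrary), the analogous construction in~\S\ref{an explicit decomposition} takes $Y_{\chi}=X_{\varpi^{-d}}$ and $\zeta_{\chi}=\theta = \chi|_{Z}$, so $\mathcal{W}_{d,\chi}\cong \mathcal{S}_{d}(X_{\varpi^{-d}},\theta)$; specializing to $\chi=\mathbbm{1}$ gives $\theta=\mathbbm{1}$ and yields the third branch.

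There is no real obstacle here: every ingredient has already been proved. The only mild subtlety is verifying that the two separate cases treated inside the proof of Theorem~\ref{prop2} (positive depth via $\Gamma$ and $g_{d}$, versus depth zero via $X_{\varpi^{-d}}$ and the central character $\theta$) align correctly with the three branches of the corollary, and in particular that the depth-zero $\chi\ne\mathbbm{1}$ case is not absorbed incorrectly into the trivial-character branch. This is straightforward to check case by case. I would close the argument by noting that the degrees $(q+1)q^{r}$ and $(q^{2}-1)q^{d-1}$ listed in Theorem~\ref{thm2}, together with Lemma~\ref{dimensionofVchiKn} and Theorem~\ref{Rep of K}, confirm that no summands have been missed.
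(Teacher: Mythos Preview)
Your proposal is correct and matches the paper's approach exactly: the paper states that the corollary is an immediate consequence of Lemma~\ref{principalseries2}, Theorem~\ref{thm2}, and Theorem~\ref{prop2}, and you have simply spelled out how these pieces (together with the identification $V_{\chi}^{\mathcal{K}_{r+1}}\cong \mathrm{Ind}_{\mathcal{B}\mathcal{K}_{r+1}}^{\mathcal{K}}\chi$ from Lemma~\ref{principalseriescharacterreduction6}) fit together across the three cases.
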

		

\section{Applications}\label{restrictiontoK2rprincipalseries}		
		
Let $\chi$ be a character of $T$ of minimal depth $r \in \mathbb{Z}_{\geq 0}$, and let $\pi_{\chi}$ be the associated principal series representation. 
		By Lemma~\ref{centralcharacterofprincipalserieshasdepth-zero}, we may assume that the central character $\theta$ of $\pi_{\chi}$ is either $\mathbbm{1}$ or $\delta$ at the expense of twisting our branching rule by a character of $G$, where $\delta$ denotes the nontrivial quadratic character of $E^{1}$.

In this section, we study the relationship between the higher-depth components appearing in the decomposition of $\pi_{\chi}$ and those appearing in the decomposition of depth-zero principal representations with the same central character as $\pi_{\chi}$. We also examine the restriction of $\pi_{\chi}$ to the subgroup $\mathcal{K}_{2r+} \subseteq \mathcal{K}$ and show that its decomposition is entirely determined by the representations of the form~\eqref{nilpotentrep1}, constructed from the nilpotent orbits in the Lie algebra of $G$ together with the character $\theta$ of the center of $G$. This establishes a new case of a recent conjecture in the literature~\cite{ZanMon2025, Mon2024, henniart2024representationssl2f, guy2024representationsglndnearidentity}.
		
	\subsection{Connection with depth-zero representations}
In this section we prove that  the higher-depth components appearing in the decomposition upon restriction to $\mathcal{K}$ of a depth-zero principal series representation are identical to those appearing in the corresponding decomposition of any principal series representation, provided the two representations share the same central character.

Note that if $\chi$ is a character of minimal depth zero with central character $\theta$, then by Corollary~\ref{thm3}, for every $d \geq 1$, the components occurring in the decomposition of $\mathrm{Res}_{\mathcal{K}}\pi_{\chi}$ are of the form $\mathcal{S}_{d}(X_{\varpi^{-d}}, \theta)$.

\begin{theorem}\label{keyidentificationp}
Let $\chi$ be a character of $T$ of minimal depth $r>0$, and let $\mathcal{S}_{d}(Y_{\chi}, \zeta_{\chi})$ be as in~Corollary~\ref{thm3}. Then, for $d > 2r$, we have an isomorphism 
\[\mathcal{S}_{d}(Y_{\chi}, \zeta_{\chi}) \cong \mathcal{S}_{d}(X_{\varpi^{-d}}, \theta).\]
\end{theorem}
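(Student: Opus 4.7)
The plan is to reduce the claim to an equality of two explicit characters on a common subgroup. Writing $x = x_1 + x_2\sqrt{\epsilon}$, so that $x_1, x_2 \in \mathfrak{p}_F^{-r}$, recall that
\[
Y_\chi = X(x_2) + \widetilde{X}\!\left(\epsilon^{-1}\varpi^{-d},\, x_1^2\varpi^d\right) \in \mathfrak{k}_{-d}.
\]
For $d > 2r$, one has $\nu(x_2) \geq -r > -\lceil d/2 \rceil$ and $\nu(x_1^2\varpi^d) \geq d - 2r > 0$, while $\nu(\epsilon^{-1}\varpi^{-d}) = -d$. Hence Lemma~\ref{d>2rcong} applies with $u = \epsilon^{-1}\varpi^{-d}$ and yields
\[
T(Y_\chi)\mathcal{J}_d \;=\; Z\mathcal{U}\mathcal{J}_d
\qquad \text{and} \qquad
\Psi_{Y_\chi} \;=\; \Psi_{X_{\epsilon^{-1}\varpi^{-d}}} \text{ on } \mathcal{J}_d.
\]
Combining this with Lemma~\ref{remark3.3.4}, which gives $\mathcal{S}_d(X_{\varpi^{-d}},\theta) \cong \mathcal{S}_d(X_{\epsilon^{-1}\varpi^{-d}},\theta)$, both sides of the desired isomorphism become induced representations from characters of the common subgroup $H := Z\mathcal{U}\mathcal{J}_d$. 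It therefore suffices to prove that
\[
\Psi_{Y_\chi,\zeta_\chi} \;=\; \Psi_{X_{\epsilon^{-1}\varpi^{-d}},\theta}
\qquad \text{as characters of } H.
\]

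Since the two characters already coincide on $\mathcal{J}_d$, I compare them on representatives of $H/\mathcal{J}_d$, which are provided by $Z$ and $\mathcal{U}$. On the center $Z \subset T(Y_\chi)$, the element $g_d$ commutes with scalar matrices, so
\[
\Psi_{Y_\chi,\zeta_\chi}\big|_Z \;=\; \zeta_\chi\big|_Z \;=\; \chi^{g_d}\big|_Z \;=\; \chi\big|_Z \;=\; \theta,
\]
matching $\Psi_{X_{\epsilon^{-1}\varpi^{-d}},\theta}\big|_Z$. The main step is to verify agreement on $\mathcal{U}$. For $u = \begin{psmallmatrix} 1 & \sqrt{\epsilon}b \\ 0 & 1 \end{psmallmatrix} \in \mathcal{U}$, use $u \in T(Y_\chi)\mathcal{J}_d$ to write $u = t j$ with $t = \begin{psmallmatrix} a & c \\ c\gamma^2 & a \end{psmallmatrix} \in T(Y_\chi)$ and $j = t^{-1}u \in \mathcal{J}_d$; inspecting the entries of $t^{-1}u$ and requiring them to lie in the defining filtration of $\mathcal{J}_d$ forces $a \equiv 1$ and $c \equiv \sqrt{\epsilon}\,b$ modulo $\mathfrak{p}_E^{\lceil d/2\rceil}$. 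The hypothesis $d > 2r$ then makes both factors of $\Psi_{Y_\chi,\zeta_\chi}(u) = \zeta_\chi(t)\,\Psi_{Y_\chi}(j)$ trivial: since $\gamma \in \mathfrak{p}_E^{d-r}$ and $d - r \geq r+1$, we have $a + c\gamma \in 1 + \mathfrak{p}_E^{r+1}$, whence $\zeta_\chi(t) = \chiE(a+c\gamma) = 1$ by the depth-$r$ condition on $\chi$; and a direct calculation of $\mathrm{Tr}\!\left(X_{\epsilon^{-1}\varpi^{-d}}(j-I)\right)$ using that the $(2,1)$-entry of $j-I$ equals $\overline{c}\,\gamma^2$ produces an element of valuation $\geq d - 2r > 0$, so $\Psi_{Y_\chi}(j) = 1$. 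Hence $\Psi_{Y_\chi,\zeta_\chi}(u) = 1 = \Psi_{X_{\epsilon^{-1}\varpi^{-d}},\theta}(u)$, completing the comparison.

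The principal technical obstacle is the valuation bookkeeping in this last step: one must verify that the higher-order corrections to the approximations $a \equiv 1$ and $c \equiv \sqrt{\epsilon}\,b$ (forced by the unitary constraint $\overline{a}a + \overline{c}c\gamma^2 = 1$ on $T(Y_\chi)$) remain deep enough that neither $\chiE$ nor $\psi$ detects them. The margin provided by $d > 2r$ is exactly what ensures this, effectively absorbing all the twist-by-$g_d$ data from $\chi$ into $\mathcal{J}_d$-triviality and leaving only the common nilpotent character $\Psi_{X_{\epsilon^{-1}\varpi^{-d}}}$ and central character $\theta$, as required for the isomorphism.
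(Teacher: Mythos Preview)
Your proposal is correct and follows essentially the same route as the paper: both invoke Lemma~\ref{d>2rcong} to identify $T(Y_\chi)\mathcal{J}_d$ with $Z\mathcal{U}\mathcal{J}_d$ and to match $\Psi_{Y_\chi}$ with $\Psi_{X_{\epsilon^{-1}\varpi^{-d}}}$ on $\mathcal{J}_d$, then reduce via Lemma~\ref{remark3.3.4} to checking that the two extended characters agree on the common inducing subgroup. The only organizational difference is that the paper verifies this by evaluating $\zeta_\chi$ on a general element of $T(Y_\chi)$ (diagonalizing via $g_d$ and decomposing inside $ZT_{r+1}$), whereas you check separately on $Z$ and on $\mathcal{U}$; these are dual parametrizations of the same quotient, and the key depth estimate $d-r\geq r+1$ driving both computations is identical.
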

\begin{proof}
	We have $Y_{\chi}=\Gamma^{g_d}=\begin{pmatrix}
		x_2\sqrt{\epsilon}& \epsilon^{-1}\varpi^{-d}\sqrt{\epsilon}\\ x_1^{2}\varpi^{d}\sqrt{\epsilon}&x_2\sqrt{\epsilon}  
	\end{pmatrix}\in\mathfrak{k}_{-d}$. Consider the nilpotent element $X_{\epsilon^{-1}\varpi^{-d}}$. Since $d>2r$,  we have $-r>-\frac{d}{2}$, and therefore
	$$\nu(x_{2})\geq -r> -\frac{d}{2}\geq-\left\lceil\frac{d}{2} \right\rceil\hspace{2em}\text{and}\hspace{2em}\nu(x_1^{2}\varpi^{d}\sqrt{\epsilon})\geq d-2r>0>-\left\lceil \frac{d}{2}\right\rceil. $$
	Thus  by Lemma~\ref{d>2rcong}, we have  $\Psi_{Y_{\chi}}=\Psi_{X_{\epsilon^{-1}\varpi^{d}}}$ on $\mathcal{J}_{d}$ and  $T(Y_{\chi})\mathcal{J}_{d}=T(X_{\epsilon^{-1}\varpi^{-d}})=Z\mathcal{U}\mathcal{J}_{d}$.
	Since $\Psi_{X_{\epsilon^{-1}\varpi^{-d}}}$ is trivial on the intersection $Z\mathcal{U}\cap \mathcal{J}_{d}$ and we assumed that $\theta$ has depth-zero, $\theta$ and $\Psi_{X_{\epsilon^{-1}\varpi^{-d}}}$  agree on $Z\mathcal{U}\cap \mathcal{J}_{d}$. Therefore, applying~\eqref{nilpotentrep1}, we obtain that $\mathcal{S}_{d}(X_{\epsilon^{-1}\varpi^{-d}}, \theta)$ is an irreducible representation of $\mathcal{K}$ of depth $d$ and degree $q^{d-1}(q^{2}-1)$.
	
	We claim that
	$\mathcal{S}_{d}(Y_{\chi}, \zeta_{\chi})=\mathcal{S}_{d}(X_{\epsilon^{-1}\varpi^{-d}}, \theta).$ It suffices to show that 
	$$\Psi_{Y_{\chi}, \zeta_{\chi}}=\Psi_{X_{\epsilon^{-1}}, \theta}\quad \text{on}\quad Z\mathcal{U}\mathcal{J}_{d}.$$
	Since $\Psi_{Y_{\chi}}=\Psi_{X_{\epsilon^{-1}\varpi^{-d}}}$ on $\mathcal{J}_{d}$, we only need to show that $\zeta \mid_{T(Y_{\chi})}=\theta\mid_{Z}$. Indeed, let $t:=\begin{psmallmatrix}a&b\\ b\gamma^{2}&a\end{psmallmatrix}\in T(Y_{\chi})$. Then $\zeta_{\chi}(t)=\chi^{g_{d}}(t)=\chi(t^{g_{d}^{-1}})$.	Note that $\gamma \in \mathfrak{p}_{E}^{d-r}$, as $d>2r$, we have that $\gamma \in \mathfrak{p}_{E}^{r+1}$. Since $a\overline{a}+b\overline{b}\gamma^{2}=1$, we obtain that $a\overline{a}\in 1 + \mathfrak{p}_{F}^{2(r+1)}$, as the norm map $N_{E/F}: 1+\mathfrak{p}_{E}^{2(r+1)}\rightarrow 1+\mathfrak{p}_{F}^{2(r+1)}$ is surjective, there exists $c\in 1+\mathfrak{p}_{E}^{2(r+1)}$ such that $c\overline{c}=a\overline{a}$. Hence we may write 
	\begin{align*}
		t^{g_{d}^{-1}}=\begin{pmatrix}a+b\gamma& 0\\ 0& a-b\gamma\end{pmatrix}=\begin{pmatrix}ac^{-1}&0\\ 0&ac^{-1}\end{pmatrix}\begin{pmatrix}c(1+a^{-1}b\gamma)&0\\ 0& c(1-a^{-1}b\gamma)\end{pmatrix}\in ZT_{r+1}
	\end{align*}
	Since $\chi$ has depth $r$, we conclude that $\zeta \mid_{T(Y_{\chi})}=\theta\mid_{Z}$ as required. Finally,  by Lemma~\ref{remark3.3.4}, we have
	$$\mathcal{S}_{d}(Y_{\chi}, \zeta_{\chi})=\mathcal{S}_{d}(X_{\epsilon^{-1}\varpi^{-d}}, \theta)\cong \mathcal{S}_{d}(X_{\varpi^{-d}}, \theta).$$
\end{proof}	

Recall that we denoted by $\delta$ the non-trivial quadratic character of $E^{1}$. 
Since \( T \cong E^{\times} \) and \( Z \cong E^{1} \), we can extend $\delta$ to a character 
\(\chiE_{\delta}\) of \( E^{\times} \), and we denote by \(\chi_{\delta}\) the corresponding character of \( T \). 
Let \( \tau_{0} = \mathrm{Ind}_{B}^{G}\mathbbm{1} \) and 
\( \tau_{1} = \mathrm{Ind}_{B}^{G}\chi_{\delta} \) be the corresponding principal series representations. 
Then the central characters of \( \tau_{0} \) and \( \tau_{1} \) are \( \mathbbm{1} \) and \( \delta \), respectively. 

The following corollary, which follows immediately from 
Theorem~\ref{keyidentificationp} and 
Lemma~\ref{centralcharacterofprincipalserieshasdepth-zero}, 
answers the Question 1.2 stated in~\cite{guy2024representationsglndnearidentity} in the context of principal series representations of $G$.
\begin{corollary}
	For any principal series representation \( \pi_{\chi} \) of \( G \), 
	the higher-depth components in the decomposition of 
	\( \pi_{\chi}\!\mid_{\mathcal{K}} \) coincide, up to a twist by a character of \( G \), 
	with those appearing in the decompositions of 
	\( \tau_{0}\!\mid_{\mathcal{K}} \) or \( \tau_{1}\!\mid_{\mathcal{K}} \).
\end{corollary}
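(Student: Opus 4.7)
The plan is to derive the corollary directly from the twisting reduction of Lemma~\ref{centralcharacterofprincipalserieshasdepth-zero} combined with the depth-zero identification provided by Theorem~\ref{keyidentificationp}. The key observation is that Theorem~\ref{keyidentificationp} already equates the high-depth pieces of any positive-depth principal series with nilpotent-orbit representations $\mathcal{S}_{d}(X_{\varpi^{-d}}, \theta)$ that are parametrized only by the central character $\theta$; hence matching $\pi_{\chi}$ to one of $\tau_{0}$ or $\tau_{1}$ reduces to arranging its central character to be $\mathbbm{1}$ or $\delta$, which is exactly what the twisting lemma achieves.

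First I would apply Lemma~\ref{centralcharacterofprincipalserieshasdepth-zero} to write $\chi = (\phi \circ \mathrm{det}) \otimes \chi_{0}$, where $\phi$ is a character of $E^{1}$ and the central character $\theta$ of $\pi_{\chi_{0}}$ is $\delta^{k}$ for a unique $k \in \{0,1\}$. By Lemma~\ref{principalseriescharacterreduction5} this yields $\pi_{\chi} \cong (\phi \circ \mathrm{det}) \otimes \pi_{\chi_{0}}$, so every irreducible $\mathcal{K}$-component of $\mathrm{Res}_{\mathcal{K}}\pi_{\chi}$ is obtained from the corresponding component of $\mathrm{Res}_{\mathcal{K}}\pi_{\chi_{0}}$ by tensoring with the one-dimensional character $(\phi \circ \mathrm{det})|_{\mathcal{K}}$. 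Thus it suffices to show that the higher-depth components of $\mathrm{Res}_{\mathcal{K}}\pi_{\chi_{0}}$ coincide with those of $\mathrm{Res}_{\mathcal{K}}\tau_{k}$.

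Next, let $r_{0} \geq 0$ denote the minimal depth of $\chi_{0}$. By Corollary~\ref{thm3}, the higher-depth components of $\mathrm{Res}_{\mathcal{K}}\pi_{\chi_{0}}$ are the irreducible representations $\mathcal{S}_{d}(Y_{\chi_{0}}, \zeta_{\chi_{0}})$ for $d \geq r_{0}+1$, and by Theorem~\ref{keyidentificationp} each such component is isomorphic to $\mathcal{S}_{d}(X_{\varpi^{-d}}, \theta)$ whenever $d > 2r_{0}$. On the other hand, since $\tau_{k}$ is depth-zero with central character $\delta^{k} = \theta$, Corollary~\ref{thm3} identifies its higher-depth components as the $\mathcal{S}_{d}(X_{\varpi^{-d}}, \theta)$ for $d \geq 1$. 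Matching the two descriptions in the common range $d > 2r_{0}$ and untwisting by $(\phi \circ \mathrm{det})|_{\mathcal{K}}$ yields the corollary for $\pi_{\chi}$.

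There is no substantive obstacle here: the statement is a bookkeeping consequence of the two cited ingredients. The only mild points requiring attention are that tensoring by a one-dimensional character preserves both the direct-sum decomposition and irreducibility of each factor (standard), and that the phrase \emph{higher-depth} is to be interpreted in the natural range $d > 2r_{0}$ where Theorem~\ref{keyidentificationp} applies; the finitely many intermediate depths $r_{0}+1 \leq d \leq 2r_{0}$ lie outside the scope of the corollary.
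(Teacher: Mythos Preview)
Your proposal is correct and follows exactly the route the paper indicates: the paper states that the corollary ``follows immediately from Theorem~\ref{keyidentificationp} and Lemma~\ref{centralcharacterofprincipalserieshasdepth-zero}'', and you have spelled out precisely this deduction, including the correct interpretation of ``higher-depth'' as the range $d>2r_{0}$. The only small point to make explicit is that after applying Lemma~\ref{centralcharacterofprincipalserieshasdepth-zero} the twisted character $\chi_{0}$ still has minimal depth (since $(\phi\circ\det)$ is trivial on $S$, the true depth is unchanged, and $\phi$ can be chosen so that the depth does not increase), which you implicitly assume when invoking Corollary~\ref{thm3} and Theorem~\ref{keyidentificationp}.
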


\subsection{Representations associated with nilpotent orbits}\label{representationsassociatedwithnilpotentorbits}

Recall from section~\ref{chapter3} that there are three nilpotent $G$-orbits in $\mathfrak{g}$, and they are parametrized by $\left\{ X_{\delta} \;\middle\vert\; \delta \in \{0, 1, \varpi\} \right\},$ where
$X_{\delta} = \begin{psmallmatrix} 0 & \delta \sqrt{\epsilon} \\ 0 & 0 \end{psmallmatrix}.
$

For $\delta \in \{0, 1, \varpi\}$, we recall that the nilpotent orbits were denoted by $\mathcal{N}_{\delta}$, and that each $G$-orbit $\mathcal{N}_{\delta}$ decomposes into the following $\mathcal{K}$-orbits
\[
\mathcal{N}_{0} = \mathcal{K} \cdot X_{0}, \hspace{3em}
\mathcal{N}_{1} = \bigsqcup_{m \in 2\mathbb{Z}} \mathcal{K} \cdot X_{\varpi^{-m}}, \hspace{3em}
\mathcal{N}_{\varpi} = \bigsqcup_{n \in 2\mathbb{Z}+1} \mathcal{K} \cdot X_{\varpi^{-n}}.
\]
For each depth-zero character $\theta$ of the center of $G$, we define  highly reducible representations 
\[
\begin{aligned}
\tau_{\mathcal{N}_{0}}(\theta) &= \mathbbm{1}, \hspace{2em}	\tau_{\mathcal{N}_{1}}(\theta) &= \bigoplus_{d \in 2\mathbb{Z}_{> 0}} \mathcal{S}_{d}(X_{\varpi^{-d}}, \theta), \hspace{2em}
	\tau_{\mathcal{N}_{\varpi}}(\theta) = \bigoplus_{d \in 2\mathbb{Z}_{\geq 0} + 1 } \mathcal{S}_{d}(X_{\varpi^{-d}}, \theta).
\end{aligned}
\]

To prepare for the decomposition of $\pi_{\rho}|_{\mathcal{K}_{2r+}}$, we compute the dimensions of the $\mathcal{K}_{2r+}$-fixed vectors in the representations defined above.
\begin{lemma}\label{dimensionofrepresentationsassociatedtonilpotentorbitsofG}
	Let $\theta$ be a depth-zero character of the center of $G$. Then
	\[
	\begin{aligned}
		\dim\!\left(\tau_{\mathcal{N}_{1}}(\theta)^{\mathcal{K}_{2r+}}\right) &= q \left(q^{2r} - 1\right),\;\; \text{and}\;\;
		\dim\!\left(\tau_{\mathcal{N}_{\varpi}}(\theta)^{\mathcal{K}_{2r+}}\right) = q^{2r} - 1.
	\end{aligned}
	\]
\end{lemma}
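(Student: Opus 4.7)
The plan is to compute the dimension of $\mathcal{K}_{2r+}$-fixed vectors in each individual summand $\mathcal{S}_{d}(X_{\varpi^{-d}}, \theta)$ and then sum the contributions. The central observation I would rely on is that the Moy--Prasad filtration has integer jumps here, so $\mathcal{K}_{2r+} = \mathcal{K}_{2r+1}$, and this subgroup is normal in $\mathcal{K}$. Consequently, for any smooth representation $(\pi, V)$ of $\mathcal{K}$, the subspace $V^{\mathcal{K}_{2r+1}}$ is automatically $\mathcal{K}$-stable.

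Applying this to the irreducible representation $\mathcal{S}_{d}(X_{\varpi^{-d}}, \theta)$ given by Theorem~\ref{Rep of K}, its space of $\mathcal{K}_{2r+1}$-fixed vectors must be either $\{0\}$ or the entire representation. Since $\mathcal{S}_{d}(X_{\varpi^{-d}}, \theta)$ has depth exactly $d$, the subgroup $\mathcal{K}_{d+1}$ acts trivially while no larger filtration subgroup does. Thus $\mathcal{K}_{2r+1}$ acts trivially precisely when $\mathcal{K}_{2r+1} \subseteq \mathcal{K}_{d+1}$, i.e., when $d \leq 2r$. Combined with the degree formula from Theorem~\ref{Rep of K}, this gives
\[
\dim \mathcal{S}_{d}(X_{\varpi^{-d}}, \theta)^{\mathcal{K}_{2r+1}} =
\begin{cases} q^{d-1}(q^{2}-1) & \text{if } d \leq 2r, \\ 0 & \text{if } d > 2r. \end{cases}
\]

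To finish, I would restrict the direct sums defining $\tau_{\mathcal{N}_{1}}(\theta)$ and $\tau_{\mathcal{N}_{\varpi}}(\theta)$ to the relevant indices of the appropriate parity (even $d \in \{2,4,\ldots,2r\}$ in the first case, odd $d \in \{1,3,\ldots,2r-1\}$ in the second), and apply the geometric series identity $\sum_{k=0}^{r-1} q^{2k} = (q^{2r}-1)/(q^{2}-1)$:
\[
\dim \tau_{\mathcal{N}_{1}}(\theta)^{\mathcal{K}_{2r+1}} = \sum_{k=1}^{r} q^{2k-1}(q^{2}-1) = q(q^{2r}-1),
\]
\[
\dim \tau_{\mathcal{N}_{\varpi}}(\theta)^{\mathcal{K}_{2r+1}} = \sum_{k=1}^{r} q^{2k-2}(q^{2}-1) = q^{2r}-1.
\]
There is no serious obstacle here: the only conceptual input is the dichotomy forced by normality plus irreducibility, and the rest is a routine bookkeeping computation.
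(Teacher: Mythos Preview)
Your proposal is correct and follows essentially the same approach as the paper: both argue that each irreducible summand $\mathcal{S}_{d}(X_{\varpi^{-d}},\theta)$ contributes its full dimension $(q^{2}-1)q^{d-1}$ when $d\leq 2r$ and nothing otherwise, then sum the resulting geometric series. You in fact make the reasoning behind the all-or-nothing dichotomy (normality of $\mathcal{K}_{2r+1}$ in $\mathcal{K}$ together with irreducibility) more explicit than the paper does.
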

\begin{proof}
	For each $d \in \mathbb{Z}_{>0}$, the representation $\mathcal{S}_{d}(X_{\varpi^{-d}}, \theta)$ is irreducible of depth $d$. Hence, the space of $\mathcal{K}_{2r+}$-fixed vectors in $\tau_{\mathcal{N}_{\delta}}(\theta)$ is obtained by adding the dimensions of those $\mathcal{S}_{d}(X_{\varpi^{-d}}, \theta)$ with $d \leq 2r$. Moreover, for each $d \in \mathbb{Z}_{>0}$ the dimension of $\mathcal{S}_{d}(X_{\varpi^{-d}}, \theta)$ is given by $(q^{2}-1) q^{d-1}$. Therefore,
	\[
	\begin{aligned}
		\dim\!\left(\tau_{\mathcal{N}_{1}}(\theta)^{\mathcal{K}_{2r+}}\right)
		&= \sum_{\substack{d \in 2\mathbb{Z}_{>0} \\ d \leq 2r}} (q^{2}-1) q^{d-1}
		= (q^{2}-1) \sum_{t=1}^{r} q^{2t-1}
		= q \left(q^{2r} - 1\right), \\
		\dim\!\left(\tau_{\mathcal{N}_{\varpi}}(\theta)^{\mathcal{K}_{2r+}}\right)
		&= \sum_{\substack{d \in 2\mathbb{Z}_{\geq 0}+1 \\ d \leq 2r}} (q^{2}-1) q^{d-1}
		= (q^{2}-1) \sum_{t=0}^{r-1} q^{2t-1}
		= q^{2r} - 1.
	\end{aligned}
	\]
\end{proof}

\subsection{Restriction to $\mathcal{K}_{2r+}$}
	In this section, we prove a representation theoretic version of the local character expansion in analogy with~\cite[Theorem~1.1]{Mon2024}.
	
	\begin{theorem}\label{principalseriesrestrictedtoK2r}
			Let $\chi$ be a character of \( T \) of minimal depth \( r \in \mathbb{Z}_{\geq 0} \), and let \( \pi_{\chi} \) be the associated principal series representation. Then
			\[
			\mathrm{Res}_{\mathcal{K}_{2r+1}}\pi_{\pi_{\chi}}
			\;\cong\;
			(q+1) \mathbbm{1}
			\;+\;
			\mathrm{Res}_{\mathcal{K}_{2r+1}} \tau_{\mathcal{N}_{1}}(\theta)
			\;+\;
			\mathrm{Res}_{\mathcal{K}_{2r+1}} \tau_{\mathcal{N}_{\varpi}}(\theta).
			\]
		\end{theorem}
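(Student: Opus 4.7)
The plan is to combine the canonical decomposition of Theorem~\ref{thm2} (equivalently Corollary~\ref{thm3}) with the higher-depth identification of Theorem~\ref{keyidentificationp}, and to close everything out with a short dimension count supplied by Lemmas~\ref{dimensionofVchiKn} and~\ref{dimensionofrepresentationsassociatedtonilpotentorbitsofG}. The natural split is between the depth-$\leq 2r$ constituents, on which $\mathcal{K}_{2r+1}$ acts trivially, and the depth-$>2r$ tail, which turns out to be common to the two sides.

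First, by Theorem~\ref{thm2} the depth-$\leq 2r$ part of $\pi_{\chi}|_{\mathcal{K}}$ equals $V_{\chi}^{\mathcal{K}_{2r+1}}$, so
\[
\mathrm{Res}_{\mathcal{K}_{2r+1}}\pi_{\chi}
\;=\;
\mathrm{Res}_{\mathcal{K}_{2r+1}}V_{\chi}^{\mathcal{K}_{2r+1}}
\;\oplus\;
\bigoplus_{d>2r}\mathrm{Res}_{\mathcal{K}_{2r+1}}\mathcal{W}_{d,\chi}.
\]
The first summand is a sum of trivial $\mathcal{K}_{2r+1}$-characters of total dimension $(q+1)q^{2r}$ by Lemma~\ref{dimensionofVchiKn}; this works uniformly for all $r\geq 0$, and when $r=0$ and $\chi=\mathbbm{1}$ it absorbs the two summands $\mathbbm{1}_{q}$ and $\mathrm{St}_{q}$, on both of which $\mathcal{K}_{1}$ still acts trivially.

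Second, for $d>2r$, Corollary~\ref{thm3} and Theorem~\ref{keyidentificationp} together give
\[
\mathcal{W}_{d,\chi}\;\cong\;\mathcal{S}_{d}(Y_{\chi},\zeta_{\chi})\;\cong\;\mathcal{S}_{d}(X_{\varpi^{-d}},\theta),
\]
so the high-depth tail matches the $d>2r$ portion of $\tau_{\mathcal{N}_{1}}(\theta)\oplus\tau_{\mathcal{N}_{\varpi}}(\theta)=\bigoplus_{d\geq 1}\mathcal{S}_{d}(X_{\varpi^{-d}},\theta)$. (In the depth-zero case the identification is already built into Corollary~\ref{thm3}, so Theorem~\ref{keyidentificationp} is needed only for $r>0$.)

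Finally, Lemma~\ref{dimensionofrepresentationsassociatedtonilpotentorbitsofG} computes the low-depth trivial contribution of $\tau_{\mathcal{N}_{1}}(\theta)\oplus\tau_{\mathcal{N}_{\varpi}}(\theta)$ as $(q+1)(q^{2r}-1)\mathbbm{1}$, so adding the extra $(q+1)\mathbbm{1}$ on the right-hand side yields $(q+1)q^{2r}\mathbbm{1}$, which matches the low-depth count above. Combining this with the identification of the high-depth tail proves the claimed equality in the Grothendieck group. The only genuine obstacle is Theorem~\ref{keyidentificationp}; once it is available, the present theorem reduces to the arithmetic identity $(q+1)+(q+1)(q^{2r}-1)=(q+1)q^{2r}$.
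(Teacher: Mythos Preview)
Your proof is correct and follows essentially the same route as the paper: split into depth-$\leq 2r$ and depth-$>2r$ pieces via Theorem~\ref{thm2}, identify the tail using Theorem~\ref{keyidentificationp} (and Corollary~\ref{thm3} directly for $r=0$), and balance the trivial contributions via the dimension counts of Lemmas~\ref{dimensionofVchiKn} and~\ref{dimensionofrepresentationsassociatedtonilpotentorbitsofG}. The only cosmetic difference is that you package the two nilpotent contributions together as $(q+1)(q^{2r}-1)$, whereas the paper keeps them separate as $q(q^{2r}-1)+(q^{2r}-1)$ before summing.
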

		\begin{proof} We first assume that $\chi$ is a character of $T$ of minimal depth $r>0$.  By Theorem~\ref{thm3} the restriction of $\pi_{\chi}$ to $\mathcal{K}$ has the following decomposition
			$$\mathrm{Res}_{\mathcal{K}}\pi_{\chi}\cong V_{\chi}^{\mathcal{K}_{r+1}}\oplus\bigoplus_{d\geq r+1}\mathcal{S}_{d}(Y_{\chi}, \zeta_{\chi}).$$ The restriction of $\pi_{\chi}$  to the subgroup $\mathcal{K}_{2r+1}$ will act trivially on the components $\mathcal{S}_{d}(Y_{\chi}, \zeta_{\chi})$ that have depth less than or equal to $2r$. Therefore, applying Theorem~\ref{keyidentificationp} we obtain 
			$$\mathrm{Res}_{\mathcal{K}_{2r+1}}\pi_{\chi}\cong V_{\chi}^{\mathcal{K}_{2r+1}}\oplus\bigoplus_{d>2r }\mathrm{Res}_{\mathcal{K}_{2r+1}}\mathcal{S}_{d}(X_{\varpi^{-d}}, \theta).$$
			Hence, in the Grothendieck group of representations we have the desired decomposition
			\[
			\mathrm{Res}_{\mathcal{K}_{2r+1}}\pi_{\pi_{\chi}}
			\;\cong\;
			n(\pi_{\chi}) \mathbbm{1}
			\;+\;
			\mathrm{Res}_{\mathcal{K}_{2r+1}} \tau_{\mathcal{N}_{1}}(\theta)
			\;+\;
			\mathrm{Res}_{\mathcal{K}_{2r+1}} \tau_{\mathcal{N}_{\varpi}}(\theta),
			\]
			where $n(\pi_{\chi})=\mathrm{dim}(V_{\chi}^{\mathcal{K}_{2r+1}})- \dim\!\left(\tau_{\mathcal{N}_{1}}(\theta)^{\mathcal{K}_{2r+1}}\right)
			- \dim\!\left(\tau_{\mathcal{N}_{\varpi}}(\theta)^{\mathcal{K}_{2r+1}}\right)$.
			By Lemma~\ref{dimensionofVchiKn}, we have $\mathrm{dim}\left(V_{\chi}^{\mathcal{K}_{2r+1}}\right)=(q+1)q^{2r}$, and by Lemma~\ref{dimensionofrepresentationsassociatedtonilpotentorbitsofG}, we have 
			$\dim\!\left(\tau_{\mathcal{N}_{1}}(\theta)^{\mathcal{K}_{2r+1}}\right) = q \left(q^{2r} - 1\right)$, and $
			\dim\!\left(\tau_{\mathcal{N}_{\varpi}}(\theta)^{\mathcal{K}_{2r+1}}\right) = q^{2r} - 1$. Therefore,
			$$n(\pi_{\chi})=(q+1)q^{2r}-q(q^{2r}-1)-(q^{2r}-1)=q+1.$$
			Now let us assume that $\chi$ has minimal depth $0$. Then by Corollary~\ref{thm3} and Theorem~\ref{keyidentificationp} we have 
			$$\mathrm{Res}_{\mathcal{K}_1}\pi_{\chi}\cong V_{\chi}^{\mathcal{K}_1}\oplus\bigoplus_{d\geq 1}\mathrm{Res}_{\mathcal{K}_{1}}\mathcal{S}_{d}(X_{\varpi^{-d}}, \theta)$$
			where $V_{\chi}^{\mathcal{K}_{1}}=\mathbbm{1}_{q}\oplus \mathrm{St}_{q}$ if $\chi=\mathbbm{1}$, and an irreducible representation of degree $(q+1)$ otherwise. 
			Therefore we have the following decomposition of $\pi_{\chi}$ when restricted to $\mathcal{K}_{1}$
			$$\mathrm{Res}_{\mathcal{K}_1}\pi_{\chi}\cong \mathrm{n}(\pi_{\chi})\mathbbm{1}\oplus 	\mathrm{Res}_{\mathcal{K}_{1}}\tau_{\mathcal{N}_{1}}(\theta)\oplus 	\mathrm{Res}_{\mathcal{K}_{1}}\tau_{\mathcal{N}_{\varpi}}(\theta)$$
where $n(\pi_{\chi})=\mathrm{dim}(V_{\chi}^{\mathcal{K}_{1}})=(q+1)$ by Lemma~\ref{dimensionofVchiKn}.
			\end{proof}

	\appendix	
	\section{The magic of Hensel's lemma}\label{section1ofappendix}		
	In this appendix, we provide a detailed proof that is essentially an application of Hensel’s lemma, which was postponed from Proposition~\ref{centralizerofXuv1} in section~\ref{chapter3}.

	\begin{lemma}
		Let $s>0$.  Suppose $k=\begin{pmatrix}a&b\\c&d\end{pmatrix}\in \mathcal{K}$ satisfies $a\equiv d \mod \mathfrak{p}_{E}^s$ and $c\equiv u^{-1}vb \mod \mathfrak{p}_{E}^s$, where $u^{-1}v\in \mathcal{O}_{F}$ and $\val(u)=0$ and $\val(v)>0$. Then there exists a matrix $k' = \begin{pmatrix}a' & b'\\ b'u^{-1}v & a'\end{pmatrix}\in \mathcal{K}$ such that $a' \equiv a \mod \mathfrak{p}_{E}^s$ and $b' \equiv b\mod \mathfrak{p}_{E}^s$; in particular, $(k')^{-1}k\in \mathcal{K}_{s}$.
	\end{lemma}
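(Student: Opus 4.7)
The plan is to produce $(a',b')$ by a Hensel-type successive approximation: construct inductively a sequence $(a_n,b_n)\in \mathcal{O}_E\times\mathcal{O}_E$ with $(a_0,b_0)=(a,b)$, such that $a_n\equiv a_{n-1}$ and $b_n\equiv b_{n-1}$ modulo $\mathfrak{p}_E^{s+n-1}$ and the two defining relations for $T(X)$ (as in~\eqref{definitionofTofX}),
\[
\mathrm{(i)}\ \mathrm{N}_{E/F}(a_n)+u^{-1}v\,\mathrm{N}_{E/F}(b_n)=1,\qquad
\mathrm{(ii)}\ \mathrm{Tr}_{E/F}(\overline{a_n}b_n)=0,
\]
hold modulo $\mathfrak{p}_F^{s+n}$; the limits $a',b'\in \mathcal{O}_E$ then satisfy (i) and (ii) exactly. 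For the base case, the $G$-relations $\overline{a}d+\overline{c}b=1$ and $\overline{b}d\in\sqrt{\epsilon}F$ combined with $d\equiv a$ and $c\equiv u^{-1}vb$ modulo $\mathfrak{p}_E^s$ directly give (i) and (ii) modulo $\mathfrak{p}_F^s$. Since $u^{-1}v\in \mathfrak{p}_F$ (as $\nu(v)>0=\nu(u)$) and $\mathrm{N}_{E/F}(b)\in \mathcal{O}_F$, (i) forces $\mathrm{N}_{E/F}(a)\equiv 1 \pmod{\mathfrak{p}_F}$, so $a\in \mathcal{O}_E^\times$; this invertibility is then preserved at every step.

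For the inductive step, I would parametrize the correction by setting
$a_n=a_{n-1}+\varpi^{s+n-1}a_{n-1}t$ and $b_n=b_{n-1}+\varpi^{s+n-1}a_{n-1}s$ with $t,s\in \mathcal{O}_F$ to be determined. Expanding (i) and (ii) and dividing through by $\varpi^{s+n-1}$ reduces the problem to two scalar congruences in $\mathcal{O}_F$ of the form
\[
\Delta_1+2\,\mathrm{N}_{E/F}(a_{n-1})\,t\equiv 0\pmod{\mathfrak{p}_F},\qquad
\Delta_2+\mathrm{N}_{E/F}(a_{n-1})\,s\equiv 0\pmod{\mathfrak{p}_F},
\]
where $\Delta_1,\Delta_2\in \mathcal{O}_F$ are the normalized defects at step $n-1$. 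The main obstacle is to verify that these two congruences genuinely decouple, i.e., that the contribution of $s$ to (i) and of $t$ to (ii) vanishes modulo $\mathfrak{p}_F$. The former carries the extra factor $u^{-1}v\in \mathfrak{p}_F$ provided by the shape of (i), and the latter carries $\mathrm{Tr}_{E/F}(\overline{a_{n-1}}b_{n-1})\in \mathfrak{p}_F^{s+n-1}$ supplied by the inductive form of (ii); a straightforward valuation count then shows each cross term has valuation at least $s+n$, and the same is true for the quadratic Taylor remainders since $2(s+n-1)\geq s+n$ for $s,n\geq 1$. As $\mathrm{N}_{E/F}(a_{n-1})\in \mathcal{O}_F^\times$, the two congruences are uniquely solvable for $t,s\in \mathcal{O}_F$.

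The sequences $(a_n),(b_n)$ are Cauchy and converge in the complete ring $\mathcal{O}_E$ to limits $a',b'$ that are congruent to $a,b$ modulo $\mathfrak{p}_E^s$ and satisfy (i), (ii) exactly, so $k':=\begin{pmatrix}a'&b'\\b'u^{-1}v&a'\end{pmatrix}\in T(X)\subseteq \mathcal{K}$. The entries of $k$ and $k'$ agree modulo $\mathfrak{p}_E^s$ — the $(2,1)$-entry by $b'u^{-1}v\equiv bu^{-1}v\equiv c$ — which forces $(k')^{-1}k\in \mathcal{K}_s$, as required.
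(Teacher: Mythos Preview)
Your proof is correct and follows the same Hensel-type successive approximation as the paper: linearize the two defining relations of $T(X)$ at each step and solve the resulting system over the residue field, using $a\in\mathcal{O}_E^\times$ for nondegeneracy.

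The parametrizations differ slightly. The paper takes general corrections $x',y'\in\mathcal{O}_E$ (four $\mathcal{O}_F$-parameters), writes the two conditions in real and $\sqrt{\epsilon}$-components, and obtains a consistent $3\times 4$ linear system over~$\mathfrak{f}$; your ansatz $a_n=a_{n-1}(1+\varpi^{m}t)$, $b_n=b_{n-1}+\varpi^{m}a_{n-1}\sigma$ with $t,\sigma\in\mathcal{O}_F$ is more economical and yields a decoupled $2\times 2$ system directly. What you gain is that the cross terms are killed by design ($u^{-1}v\in\mathfrak{p}_F$ for (i), the inductive trace condition for (ii)); what the paper gains is that no cleverness in the parametrization is needed, at the cost of checking consistency of a slightly larger system. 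Two minor points: you reuse the symbol $s$ for both the integer in the statement and the correction variable, and your second displayed congruence is missing a factor~$2$ in front of $\mathrm{N}_{E/F}(a_{n-1})$ (coming from $\mathrm{Tr}_{E/F}$ of an element of $F$); neither affects the argument since $p\neq 2$.
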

	
	\begin{proof}
		We wish to find $x,y\in \mathcal{O}_{E}$ such that setting $a'=a+x\varpi^s$ and $b'=b+y\varpi^s$ yields a matrix $k'\in \mathcal{K}$.  That is, we require the following defining identities of $U(1,1)$ to hold:
		\begin{align*}
			\overline{(a+x\varpi^s)}(a+x\varpi^s) &+ u^{-1}v\overline{(b+y\varpi^s)}(b+y\varpi^s) = 1\\
			\overline{(a+x\varpi^s)}u^{-1}v(b+y\varpi^s) &\in \sqrt{\varepsilon}F\\
			\overline{(b+y\varpi^s)}(a+x\varpi^s) &\in \sqrt{\varepsilon}F
		\end{align*}
		Since $u^{-1}v\in F$, the second equation follows from the third.  We produce the solution $(x,y)$ by induction. 
		
		We begin with depth $t\leq s$.  Then choosing $x=y=0$ is a solution modulo $\mathfrak{p}_{E}^t$.  Suppose we have a pair $(x,y)$ that solves this system modulo $\mathfrak{p}_{E}^t$. Then setting $a'=a+x\varpi^s$, $b'=b+y\varpi^s$ we know that there exists $\alpha, \gamma \in \mathcal{O}_{F}$ and  $\beta\in \mathcal{O}_{E}$,  such that 
		\begin{equation}\label{11}
			\overline{a'}a' + u^{-1}v\overline{b'}b' = 1 + \alpha\varpi^{t}
		\end{equation}
		and 
		\begin{equation}\label{22}
			\overline{b'}a' = \sqrt{\varepsilon}\gamma + \beta\varpi^{t}
		\end{equation}
		
		We claim that we can find $x',y'\in \mathcal{O}_{E}$ such that $(a'',b'')=(a'+x'\varpi^{t}, b'+y'\varpi^{t})$ satisfies
		\begin{equation}\label{111}
			\overline{a''}a'' + u^{-1}v\overline{b''}b'' \in 1+\mathfrak{p}_{E}^{t+1}
		\end{equation}
		and \begin{equation}\label{222}
			\overline{b''}a'' \in \sqrt{\varepsilon}\gamma +\mathfrak{p}_{E}^{t+1}.
		\end{equation}
		
		Write $a'=a'_0+a'_1\sqrt{\varepsilon}$, $b' = b'_0+b'_1\sqrt{\varepsilon}$ and $\beta=\beta_0+\beta_1\sqrt{\varepsilon}$; our unknowns are  $x'=x_0'+x_1'\sqrt{\varepsilon}$ and $y'=y_0'+y_1'\sqrt{\varepsilon}$.  We will show that $x_0', x_1', y_0', y_1'$ are a solution to a nondegenerate linear system over the residue field $\mathfrak{f}$ of $F$.
		
		Expanding the left hand side of \eqref{111} and subtracting the left hand side of \eqref{11} reveals that we need to solve only $a'\overline{x'}+\overline{a'}x' \equiv -\alpha \mod \mathfrak{p}_{F}$.  In terms of our coefficients, we need to satisfy
		\begin{align*}
			2(a'_0x_0' - \varepsilon a_1'x_1') &\equiv -\alpha \mod \mathfrak{p}_{F}.
		\end{align*}
		Similarly, expanding the left hand side of \eqref{222} and subtracting the left hand side of \eqref{22} reveals that we only require $\overline{b'}x'+a'\overline{y'} \equiv -\beta \mod \mathfrak{p}_{E}$.  In terms of our coefficients, this is the system
		\begin{align*}
			b_0'x_0' - b_1'\varepsilon x'_1 +a'_0y_0'-a_1'\varepsilon y_1' &\equiv -\beta_0 \mod \mathfrak{p}_{F}\\
			-b_1' x_0' + b_0' x_1' +a_1' y_0' - a_0' y_1' &\equiv -\beta_1 \mod \mathfrak{p}_{F}.
		\end{align*}
		Since our original matrix $k$ has determinant in $\mathcal{O}_{E}^\times$, and $a'\equiv a\mod \mathfrak{p}_{E}$, $b'\equiv b\mod \mathfrak{p}_{E}$, we infer that $(a'_0)^2-\varepsilon (a'_1)^2 \in \mathcal{O}_{E}^\times$, whence the linear system composed of the three preceding displayed equations is consistent over the field $\mathcal{O}_{F}/\mathfrak{p}_{F}$, and thus has a  solution.  By the principle of mathematical induction, this process gives as a limit an element $k'\in \mathcal{K}$ satisfying the lemma.
	\end{proof}

\providecommand{\bysame}{\leavevmode\hbox to3em{\hrulefill}\thinspace}
\providecommand{\MR}{\relax\ifhmode\unskip\space\fi MR }
\providecommand{\MRhref}[2]{%
	\href{http://www.ams.org/mathscinet-getitem?mr=#1}{#2}
}
\providecommand{\href}[2]{#2}

\end{document}